\numberwithin{equation}{section}
\newtheorem{theorem}{Theorem}[section]
\newtheorem{proposition}[theorem]{Proposition}
\newtheorem{lemma}[theorem]{Lemma}
\newtheorem{coro}[theorem]{Corollary}
\newtheorem{conjecture}[theorem]{Conjecture}
\theoremstyle{definition}
\newtheorem{remark}{Remark}
\newcommand{\del}{\delta}
\newcommand{\Q}{\mbox{$\mathbb Q$}}
\newcommand{\R}{\mbox{$\mathbb R$}}     
\newcommand{\C}{\mbox{$\mathbb C$}}     
\begin{document}

	\title[ UNIFORM BOUNDS ON $S$-INTEGRAL POINTS IN BACKWARD ORBITS OF POWER MAPS]{ UNIFORM BOUNDS ON $S$-INTEGRAL POINTS IN BACKWARD ORBITS OF POWER MAPS} 

\author[R. Padhy]{R. Padhy}
\address{Rudranarayan Padhy, Department of Mathematics, National Institute of Technology Calicut, 
	Kozhikode-673 601, India.}
\email{rudranarayan\_p230169ma@nitc.ac.in; padhyrudranarayan1996@gmail.com}

\author[S. S. Rout]{S. S. Rout}
\address{Sudhansu Sekhar Rout, Department of Mathematics, National Institute of Technology Calicut, 
	Kozhikode-673 601, India.}
\email{sudhansu@nitc.ac.in; lbs.sudhansu@gmail.com}

\thanks{2020 Mathematics Subject Classification: Primary 37F10, Secondary 11G50, 11J86. \\
	Keywords: Backward orbit; equidistribution; integral points; preperiodic points; exceptional points; The Arakelov-Zhang pairing; linear forms in logarithms.\\
	Work supported by NBHM grant (Sanction Order No: 14053).}

\begin{abstract}
Let $K$ be a number field with algebraic closure $\overline{K}$ and let $S$ be a finite set of places of $K$ containing all the archimedean places. It is known from Silverman's result that a forward orbit of a rational map $\varphi$ contains finitely many $S$-integers in the number field K when $\varphi^2$ is not a polynomial. Sookdeo stated an analogous conjecture for the backward orbits of a rational map $\varphi$ using a general $S$-integrality notion based on the Galois conjugates of points.  He proved his conjecture for the power map  $\varphi(z) =z^d$ for $d \geq 2$ and consequently for Chebyshev maps (J. Number Theory 131 (2011), 1229-1239). In this paper, we establish uniform bounds on the number of $S$-integral points in the backward orbits of a fixed non-zero $\beta$ in $K$, relative to a non-preperiodic point $\alpha \in \mathbb{P}^1(\overline{K})$, under the power map $\varphi(z) =z^d $. 
\end{abstract}

\maketitle
\pagenumbering{arabic}
\pagestyle{headings}

\section{Introduction}
Let $K$ be a number field with algebraic closure $\overline{K}$, let $S$ be a finite set of places of $K$ containing all the archimedean places of $K$ and let $\alpha, \beta \in \overline{K}$.  We say that $\beta$ is \emph{$S$-integral relative to} $\alpha$ if no conjugate of $\beta$ meets any conjugate of $\alpha$ at primes lying outside of $S$. For a rational map $\varphi: \mathbb{P}^1 \to \mathbb{P}^1$ defined over a field $K$, we say that a point $x\in \mathbb{P}^{1}(\overline{K})$ is preperiodic if there exist distinct positive integers $m, n$ such that $\varphi^m(x) = \varphi^n(x)$.  The \emph{forward orbit} of $x\in \mathbb{P}^1(K)$ under $\varphi$ is defined as 
\begin{equation*}
	\mathcal{O}^+_\varphi(x)=\{x, \varphi(x),\varphi^2(x),...\}
\end{equation*}
and the \emph{backward orbit} of $x$, denoted by $\mathcal{O}^-_\varphi(x)$, is defined as  
\begin{equation*}
	\mathcal{O}^-_\varphi(x)=\bigcup_{n\geq 0} \varphi^{-n}(x)\subset \mathbb{P}^{1}(\overline{K}),  
\end{equation*} 
where $\varphi^{-n}(x)=\{y\in \mathbb{P}^1(\overline{K}):\varphi^n(y)=x\}$.
Furthermore, a point $x$ is said to be \emph{preperiodic} for $\varphi$ or $\varphi$-\emph{preperiodic} if its forward orbit $\mathcal{O}^+_\varphi(x)$ is finite. Similarly, $x$ is \emph{exceptional} if its backward orbit $\mathcal{O}^-_\varphi(x)$ is finite. We denote by $\mbox{PrePer}(\varphi, \overline{K})$ the set of all $\varphi$-preperiodic points in $\overline{K}$. 

There are strong similarities between $\mathcal{O}_\varphi^{-}(\beta)$ and $\mbox{PrePer}(\varphi, \overline{K})$. It has been conjectured that the set PrePer$(\varphi, \overline{K})$ contains only finitely many points that are $S$-integral relative to a non-preperiodic point $\alpha$ under $\varphi$.
In 1993, Silverman \cite{sil93} proved that if $\varphi^2(z)$ is not a polynomial, then $\mathcal{O}_\varphi^+(\beta)$ contains at most finitely many points in $\mathcal{O}_{K,S}$, the ring of $S$-integers in $K$. Since $\mathcal{O}_\varphi^-(\beta) \cap \mathbb{P}^1(L)$ is finite for any $\beta$ and for any finite extension $L$ of $K$ \cite[Corollary 2.2]{Sookdeo}, it is natural to ask when $\mathcal{O}_\varphi^-(\beta)$ contains finitely many points in $\mathcal{O}_{K,S}$.
Sookdeo made the following conjecture in \cite{Sookdeo}.
\begin{conjecture}[\cite{Sookdeo}, p.1230]\label{conj1} If $\alpha \in \mathbb{P}^1(K)$ is not $\varphi$-preperiodic, then for any $\beta \in \mathbb{P}^1(K)$, $\mathcal{O}_{\varphi}^-(\beta)$ contains at most finitely many points in $\mathbb{P}^1(\overline{K})$ which are $S$-integral relative to $\alpha$.
\end{conjecture}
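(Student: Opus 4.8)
The conjecture for a general rational map seems beyond current methods, so the plan is to establish it for the power map $\varphi(z)=z^{d}$, $d\ge 2$ (the case treated in this paper), and to indicate at the end why the general case is harder. First the reductions. If $\beta\in\{0,\infty\}$ then $\mathcal O^-_{\varphi}(\beta)=\{\beta\}$ and there is nothing to prove; and $\alpha\notin\{0,\infty\}$ since those are $\varphi$-preperiodic, so $\alpha\in K^{\ast}$ and, $\alpha$ being non-preperiodic, Kronecker's theorem gives $h(\alpha)>0$, i.e.\ $\alpha$ is not a root of unity. Enlarging $S$ only enlarges the set of $S$-integral points, so we may adjoin the finitely many places at which $\alpha$ or $\beta$ fails to be a unit and assume $\alpha,\beta\in\mathcal O_{K,S}^{\ast}$; if $\beta$ is itself a root of unity I would invoke the Baker--Ih finiteness theorem (finitely many torsion points are $S$-integral relative to a non-torsion $\alpha$), so assume also $h(\beta)>0$. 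Finally, a point $\gamma\in\mathcal O^-_{\varphi}(\beta)$ lying in $K$ forces $\beta$ to be a $d^{n}$-th power in $K$, which bounds $n$ and leaves finitely many such $\gamma$; so it suffices to bound the $S$-integral $\gamma$ with $[K(\gamma):K]\ge 2$.

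Fix such a $\gamma\in\varphi^{-n}(\beta)$. Because $\beta$ is an $S$-unit so is $\gamma$ (as $\gamma^{d^{n}}=\beta$), and under these normalisations the chordal $S$-integrality condition for $\gamma$ relative to $\alpha$ is equivalent to $\gamma'-\alpha\in\mathcal O_{K(\gamma),S}^{\ast}$ for every $K$-conjugate $\gamma'$ of $\gamma$. Now I bring in the classical (Capelli--Kummer) factorisation of $x^{d^{n}}-\beta$: write $\beta=\beta_{\ast}^{\,d^{k}}$ with $k$ maximal, so $\beta_{\ast}$ is not a $d$-th power in $K$. The preimages at levels $n\le k$ form a fixed finite set, so assume $n>k$ and put $n'=n-k$; then $\gamma^{\,d^{n'}}=\zeta\beta_{\ast}$ for some $d^{k}$-th root of unity $\zeta$, and — after a further bounded descent absorbing any residual power-dependence of $\zeta\beta_{\ast}$ over $K(\zeta)$ — one may take $x^{\,d^{n'}}-\zeta\beta_{\ast}$ irreducible over $K(\zeta)$. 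Taking the norm $N_{K(\gamma)/K(\zeta)}$ of the $S$-unit $\gamma-\alpha$ then shows that $\alpha^{\,d^{n'}}-\zeta\beta_{\ast}\in\mathcal O_{K(\zeta),S}^{\ast}$.

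Since $\alpha$, $\beta_{\ast}$ and $\zeta$ are themselves $S$-units in $K(\zeta)$, the identity
\[
\frac{\alpha^{\,d^{n'}}}{\zeta\beta_{\ast}}\;-\;\frac{\alpha^{\,d^{n'}}-\zeta\beta_{\ast}}{\zeta\beta_{\ast}}\;=\;1
\]
is a two-term $S$-unit equation over $K(\zeta)$; by the finiteness theorem for such equations there are only finitely many possible values of $\alpha^{\,d^{n'}}$, and as $\alpha$ is not a root of unity the powers $\alpha^{\,d^{n'}}$ are pairwise distinct, so $n'$, hence $n$, is bounded. Therefore every point of $\mathcal O^-_{\varphi}(\beta)$ that is $S$-integral relative to $\alpha$ lies in the finite set $\bigcup_{n\le N_{0}}\varphi^{-n}(\beta)$, which is the assertion. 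One can equivalently replace the $S$-unit equation by a lower bound for linear forms in logarithms à la Baker, forcing $\alpha^{\,d^{n'}}-\zeta\beta_{\ast}$ to acquire a prime factor outside $S$ once $n'$ is large (a primitive-divisor statement); combined with the Evertse--Schlickewei--Schmidt / Beukers--Schlickewei bounds for $S$-unit equations with fixed coefficients, this route is what produces bounds independent of $\beta$, subject to excluding the degenerate case where $\beta$ is a high iterate of $\alpha$ (there the count genuinely grows with the iterate, so some such restriction is unavoidable).

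The step I expect to be the real obstacle is the reducible case: when $x^{d^{n}}-\beta$ factors one must carry the auxiliary root of unity $\zeta$ along and make sure the $S$-unit produced is the relevant factor of $\alpha^{\,d^{n'}}-\zeta\beta_{\ast}$ — concretely, that a primitive prime divisor of $(\alpha^{d^{n'}}/\beta_{\ast})^{\mathrm{ord}\,\zeta}-1$ actually divides $N_{K(\gamma)/K(\zeta)}(\gamma-\alpha)$ rather than being lost to reducibility of cyclotomic polynomials over $K$, and that the bounded descent terminates uniformly. This is precisely the subtlety already present in the Baker--Ih torsion argument, so it is manageable but needs care. For the general conjecture there is no substitute for the explicit factorisation of $x^{d^{n}}-\beta$; the natural replacement is adelic equidistribution of the backward orbit towards the canonical measure $\mu_{\varphi}$ together with the Arakelov-theoretic fact that $\sum_{v}\int-\log\|z,\alpha\|_{v}\,d\mu_{\varphi,v}(z)$ is governed by $\hat h_{\varphi}(\alpha)>0$ (non-preperiodicity), but extracting a genuine contradiction with $S$-integrality from this appears to require an effective form of arithmetic equidistribution that is not presently available.
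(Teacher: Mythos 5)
Your proof is correct in outline and it deliberately restricts to the power map $\varphi(z)=z^d$, which is exactly the scope in which the paper itself proves the statement (Proposition~\ref{prop01}); the conjecture for general $\varphi$ is left open here as well. But you take a genuinely different route from the paper. The paper's argument is the Baker--Ih--Rumely style argument: assume infinitely many $S$-integral $\gamma_n\in\mathcal O^-_\varphi(\beta)$, form the averaged sum $T_n=\frac{1}{[K(\gamma_n):\Q]}\sum_v\sum_\sigma\log|\sigma(\gamma_n)-\alpha|_v$, observe $T_n=0$ by the product formula, and then show via $S$-integrality plus a local non-accumulation lemma (Lemma~\ref{lemma1}, which counts roots of unity $\zeta$ with $|1-\zeta|_v<r$) that $T_n\to h(\alpha)>0$, a contradiction. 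Your argument instead reduces to $\alpha,\beta\in\mathcal O_{K,S}^\ast$ with $h(\alpha),h(\beta)>0$, factors $\beta=\beta_\ast^{d^k}$ and $\gamma^{d^{n'}}=\zeta\beta_\ast$ by Capelli--Kummer, takes $N_{K(\gamma)/K(\zeta)}(\gamma-\alpha)$ to produce the $S$-unit $\alpha^{d^{n'}}-\zeta\beta_\ast$, and bounds $n'$ by the finiteness of two-term $S$-unit equations. This is essentially Sookdeo's original method (the Siegel-theorem route the paper attributes to him in Section~\ref{sec-finiteness}), made a bit more explicit. What the paper's equidistribution approach buys, and the reason the authors reprove the statement this way, is that it is the approach that can be made quantitative: the product-formula/equidistribution argument feeds directly into the Favre--Rivera-Letelier quantitative equidistribution (Propositions~\ref{corollary2.2}, \ref{propadelic}, \ref{quntilogprop2}) to get the uniform bounds of Theorems~\ref{thm2} and~\ref{thm02}. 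Your route is more elementary (no Berkovich machinery) and gives finiteness cleanly, but the $S$-unit-equation bound is not easily turned into a bound on the Galois-orbit size that is uniform in $\alpha$ and $\beta$, which is the paper's ultimate goal. One caveat on your write-up: the ``bounded descent absorbing any residual power-dependence of $\zeta\beta_\ast$ over $K(\zeta)$'' and the claim that the relevant norm factor is not lost to cyclotomic reducibility are genuine technical points --- you flag them, and they are manageable, but as written they are asserted rather than proved; filling them in would require the Capelli irreducibility criterion carefully (including the exceptional $-4c^4$ case when $4\mid d^{n'}$) over each of the finitely many fields $K(\zeta)$.
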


 Note that, Conjecture~\ref{conj1} is true when $\varphi$ is a power map, i.e., $\varphi(z)=z^d$ with $d\ge 2$, by bounding the number of Galois orbits of the polynomial $z^n-\beta$ when $\beta$ is neither $0$ nor a root of unity, and using Siegel's theorem for integral points on $\mathbb{G}_m(K)$ (see \cite{Sookdeo}). By the functorial property of the relative $S$-integral, Conjecture \ref{conj1} is also true for Chebyshev map \cite{{Sookdeo}}. Recently, we have seen the effectiveness of quantitative equidistribution techniques in answering questions in unlikely intersections, particularly with a view toward uniform result \cite{demarco2020, demarco2022, yap}.
 
 The finiteness result in Conjecture \ref{conj1} for the map $\varphi(z)=z^d$ follows immediately since $\mathcal{O}^{-}_\varphi(\beta) \subset \{\gamma \in \overline{K}~|~\gamma^n= \beta, \text{for some } n \in \mathbb{Z}_{\geq 0}\}$.
In this paper, we provide quantitative bounds on the size of Galois orbits of integral points in backward orbits of power maps. For an algebraic number $x\in \overline{\Q}$ and a number field $K$, we let $G_K(x):=\mbox{Gal}(\overline{K}/K)\cdot x$ denote the $\mbox{Gal}(\overline{K}/K)$-orbit of $x$ and $|G_K(x)|$ denote the size of the set $G_K(x)$.  
\begin{theorem}\label{thm2}
Let $S$ be a finite set of places of $\mathbb{Q}$ including all the archimedean places and $K$ be a number field. Let $d\geq 2$ be an integer, $\varphi(z)=z^d$ be a rational map, $\alpha  \not \in \emph{PrePer}(\varphi, \overline{K})$ and $\beta$ be a fixed non-zero element in $K$. Then there exists a constant $C=C([K:\mathbb{Q}],S, d, \beta)$ such that for any such $\alpha$, if $\gamma \in \mathcal{O}^-_\varphi(\beta)$ is $S$-integral relative to $\alpha$ then $|G_{K}(\gamma)| <C$.
\end{theorem}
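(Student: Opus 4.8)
The plan is to translate the statement about backward orbits into a concrete Diophantine problem about the polynomials $z^n - \beta$. Since $\varphi(z)=z^d$, we have $\mathcal{O}^-_\varphi(\beta) \subset \{\gamma : \gamma^{d^k} = \beta \text{ for some } k \geq 0\}$, so any $\gamma$ in the backward orbit is a root of $z^n - \beta$ for $n = d^k$. Write $\beta = \eta^m$ where $\eta \in K$ is not a perfect power in $K$ (i.e. extract the maximal proper-power structure), and reduce to controlling $|G_K(\gamma)|$ for $\gamma$ with $\gamma^n = \beta$. The size of the Galois orbit $G_K(\gamma)$ is the degree $[K(\gamma):K]$, which for Kummer-type extensions is governed by $n$ up to a bounded factor coming from roots of unity in $K$ and the above power structure; so it suffices to bound $n$ (equivalently $k$) in terms of $[K:\mathbb{Q}]$, $|S|$, and $d$.

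**Using $S$-integrality and height estimates.** The key input is that $\gamma$ is $S$-integral relative to a non-preperiodic $\alpha$. For the power map, $\alpha \notin \mathrm{PrePer}(\varphi,\overline{K})$ means $\alpha \neq 0, \infty$ and $\alpha$ is not a root of unity, so $h(\alpha) > 0$ and in fact (by Dobrowolski or even just a crude gap, whichever is cleanest) $h(\alpha)$ is bounded below in terms of $[K(\alpha):\mathbb{Q}]$ — but we want uniformity, so instead I would exploit the $S$-integrality condition directly: it forces, at each place $v \notin S$, that $|\gamma - \sigma\alpha|_v$ (suitably normalized over conjugates) is not small, which via the product formula and the local decomposition of heights constrains $h(\gamma)$ from below. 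On the other hand $\gamma^n = \beta$ gives $h(\gamma) = h(\beta)/n$, so $h(\gamma) \to 0$ as $n \to \infty$. The tension between "$h(\gamma)$ is bounded below by something depending only on $[K:\mathbb{Q}]$, $|S|$, $\varphi$, and the $S$-integrality" and "$h(\gamma) = h(\beta)/n$" would force $n$ to be bounded — but $h(\beta)$ itself is not uniformly bounded, so this naive approach fails and must be refined.

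**The equidistribution argument.** The right tool, as the introduction signals, is quantitative equidistribution. If $n$ (hence $k$) were large, the Galois conjugates of $\gamma$ — which are the $n$-th roots of $\beta$ up to a root of unity — equidistribute with respect to the equilibrium measure of $\varphi(z)=z^d$ on $\mathbb{P}^1$, namely normalized Haar measure on the unit circle (with respect to each place, or the corresponding Berkovich picture at finite places). The $S$-integrality of $\gamma$ relative to $\alpha$ says the conjugates of $\gamma$ avoid the conjugates of $\alpha$ $p$-adically outside $S$; quantitatively, an $S$-integrality statement translates into a lower bound on a sum of local Green's function values $\sum_{\sigma} g_v(\sigma\gamma, \tau\alpha)$ over $v \notin S$, hence (via the product/adelic formula for the Arakelov–Zhang pairing) into an \emph{upper} bound on the archimedean contributions, which cannot hold once the conjugates are equidistributed and $\langle \varphi, \psi_\alpha \rangle > 0$ where $\psi_\alpha$ is the map with $\alpha$ preperiodic — the Arakelov–Zhang pairing of $\varphi(z)=z^d$ with a map having $\alpha$ as a fixed/preperiodic point is positive precisely because $\alpha$ is not preperiodic for $\varphi$. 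Carrying out the quantitative version (Favre–Rivera-Letelier or Fili-style discrepancy bounds) yields an explicit contradiction once $|G_K(\gamma)| = [K(\gamma):K]$ exceeds a constant $C([K:\mathbb{Q}],|S|,\varphi)$.

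**Main obstacle.** The hard part is making the equidistribution quantitative and uniform in $\beta$: the standard equidistribution theorem is asymptotic in the degree of $\gamma$, but here we need an effective bound on that degree depending only on $[K:\mathbb{Q}]$, $|S|$, and $\varphi$ — not on $\beta$ or $\alpha$. This requires a quantitative equidistribution estimate (bounding the discrepancy between the conjugate-counting measure and the equilibrium measure by a negative power of the degree, uniformly) together with a uniform \emph{positive} lower bound for the Arakelov–Zhang pairing $\langle \varphi, \psi_\alpha \rangle$ when $\alpha$ is not $\varphi$-preperiodic; the latter lower bound is itself a gap principle (a Lehmer-type statement for the pairing) and is where the real work lies. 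A secondary technical point is handling the place at infinity versus the non-archimedean places on equal footing using the Berkovich/adelic formalism, and correctly accounting for the root-of-unity and power-structure ambiguity when passing from "$\gamma$ is an $n$-th root of $\beta$" to "$[K(\gamma):K] \asymp n$."
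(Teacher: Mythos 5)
Your high-level strategy matches the paper's skeleton: equidistribution of the Galois conjugates of $\gamma$, the Arakelov--Zhang pairing to convert the adelic $S$-integrality constraint into an estimate on $h_\varphi(\alpha)$, and a Lehmer-type height lower bound to close the contradiction. But you have misidentified where the real difficulty lives, and the piece you have skipped is the one without which the argument does not run.

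The test function one wants to feed into Favre--Rivera-Letelier quantitative equidistribution is the local height $\lambda_{\alpha,v}(z)=-\log\delta_v(z,\alpha)$, which has a logarithmic singularity at $z=\alpha$ and is therefore not Lipschitz; the discrepancy bound only applies to Lipschitz (class $\mathcal{C}^1_{\mathrm{sph}}$) functions. The paper truncates, replacing $\lambda_{\alpha,v}$ by $\lambda_{\tau,v}(z)=\log^+|z|_v+\log^+|\alpha|_v-\log\max\{\tau,|z-\alpha|_v\}$, applies the discrepancy bound to $\lambda_{\tau,v}$ (whose Dirichlet energy is $O(-\log\tau)$), and then controls the truncation error, which is governed by $\max_{z\in\mathcal{P}}\log|z-\alpha|_v^{-1}$. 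Bounding that maximum is exactly where the new Diophantine input enters: at archimedean places the conjugates of $\gamma$ are $n$-th roots of unity times a fixed number, and the paper invokes a linear-forms-in-two-logarithms estimate (Laurent--Mignotte--Nesterenko, via Theorem~\ref{linearform} and Proposition~\ref{prop3.1}) to get $\max_{z\in\mathcal{P}}\log|z-\alpha|_v^{-1}\ll[K:\Q]^3(h(\alpha)+h(s)+1)|\mathcal{P}|^{\epsilon}$; at non-archimedean places a short $p$-adic count of roots of unity near $1$ suffices (Proposition~\ref{prop3.2}, Corollary~\ref{cor3.3}). Your proposal is silent on the singularity, the truncation, and linear forms in logarithms. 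Conversely, the ``uniform positive lower bound for the pairing'' that you flag as where the real work lies comes for free in the paper: since $h_\varphi=h$ for the power map, it reduces to Dobrowolski's classical bound $h(\alpha)\gg 1/(D(\log D)^3)$, cited off the shelf. You also omit the elementary but necessary step (Proposition~\ref{degree}, via the growth of Euler's totient) showing $[K(\gamma):K]\ge\sqrt{n}/[K:\Q]$, which is what lets a bound on $|G_K(\gamma)|$ translate into a bound on $n$ and hence makes the discrepancy estimate bite.
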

\begin{remark}
In Theorem \ref{thm2}, we allow $\alpha$ to vary over number fields $K$ with $[K:\Q] \leq D$ for some positive integer $D \geq 1$, which is why we let $S$ be a finite set of places of $\mathbb{Q}$ rather than $K$. 
\end{remark}
\begin{remark}
For fixed $S$ and $\beta$, our result is uniform in $\alpha$. If we vary $\beta$, Theorem \ref{thm2} is not true, which can be observed from the following example. Let $K=\mathbb{Q}$, and $S = \{\infty\}$.  Given $d>1$ and $\alpha:=2$, take $\beta = 2^{d^n}+1, \gamma= \beta^{1/d^n}$ for every $n=1,2, \dots$. Then $\gamma^{d^n}=2^{d^n}+1$ is $S$-integral with respect to $\alpha^{d^n}=2^{d^n}$, so necessarily $\gamma$ is $S$-integral with respect to $\alpha$. But the degree of $\gamma$ over $\mathbb{Q}$ tends to infinity with $n$. This is a contradiction to Theorem \ref{thm2}. 
\end{remark}
Next, we will provide an upper bound that grows exponentially with $[K:\Q]$. We let $S_{\mbox{fin}}$ be the subset of $S$ consisting of all non-archimedean places. 
\begin{theorem}\label{thm02} 
Let $K$ be a number field, $S$ be a finite set of places of $K$ including all the archimedean places. Let $d \geq 2$ be an integer, $\varphi(z)=z^d$ be a rational map, $\alpha  \not \in \emph{PrePer}(\varphi, \overline{K})$ and $\beta$ be a fixed non-zero element in $K$. Then there exists a constant $C_1=C_1(d, \beta) >0$ such that for any such $\alpha$,  the set 
$$\{\gamma \in \mathcal{O}^-_\varphi(\beta): |G_K(\gamma)|  >C_1 |S|^3 [K:\Q]^{8}, \gamma \;\text{is $S$-integral relative to $\alpha$}\}$$ is a union of at most $|S_{\emph{fin}}|\;\; \emph{Gal}(\overline{K}/K)$-orbits.
\end{theorem}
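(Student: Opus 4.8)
The plan is to analyze directly the structure of $\mathcal{O}^-_\varphi(\beta)$ for $\varphi(z) = z^d$. Any $\gamma \in \mathcal{O}^-_\varphi(\beta)$ satisfies $\gamma^{d^n} = \beta$ for some $n \geq 0$, so writing $m = d^n$ we have $\gamma = \zeta \beta_0$ where $\beta_0$ is a fixed choice of $m$-th root of $\beta$ and $\zeta$ ranges over $m$-th roots of unity. The Galois orbit $G_K(\gamma)$ is then governed by the degree $[K(\gamma):K]$, which factors through the degree of the cyclotomic part $[K(\zeta_m):K]$ and the degree $[K(\zeta_m)(\beta_0):K(\zeta_m)]$. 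The key numerical input: $|G_K(\gamma)| = [K(\gamma):K] \geq \varphi_{\text{Euler}}(m)/[K\cap\mathbb{Q}(\zeta_m):\mathbb{Q}]$ up to bounded factors, so a large value of $|G_K(\gamma)|$ forces $m = d^n$ to be large, hence $n$ large.

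**The integrality constraint.**

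First I would use the $S$-integrality of $\gamma$ relative to $\alpha$ to pin down the valuations of $\gamma$. Since $\gamma^m = \beta \in K^\times$, for any place $v \notin S$ we get $m \cdot v(\gamma) = v(\beta)$ in the appropriate normalization; the $S$-integrality relative to $\alpha$ (no conjugate of $\gamma$ meets a conjugate of $\alpha$ outside $S$) controls how $\gamma$ sits at the bad places, and crucially forces $\gamma$ to be close to $0$ or $\infty$ $v$-adically unless $v$ is among finitely many places determined by $\alpha$. The point is that $\gamma$ and its conjugates $\zeta\beta_0$ all have the \emph{same} absolute values at each place (they differ by roots of unity), so the set of "bad" non-archimedean places where $\gamma$ fails to be a unit is contained in $S_{\text{fin}} \cup \{v : v(\beta) \neq 0\}$, and the $S$-integrality relative to $\alpha$ restricts things further. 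Comparing this with the fact that $\alpha$ is non-preperiodic (so $\alpha \neq 0, \infty$ and $\alpha$ is not a root of unity, giving it positive canonical/Weil height), one extracts that the number of $n$ for which such a $\gamma$ can exist is controlled.

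**Counting Galois orbits via the cyclotomic tower.**

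The heart of the argument: once $|G_K(\gamma)| > C_1 [K:\mathbb{Q}]^8$, the index $n$ is forced to be large, and then I claim $\gamma^m = \beta$ with $m = d^n$ large severely limits the \emph{number} of distinct Galois orbits. Indeed, two elements $\gamma_1, \gamma_2$ with $\gamma_i^{d^{n_i}} = \beta$ lie in related cyclotomic-Kummer extensions; if $n_1 \leq n_2$ then $\gamma_1 = \gamma_2^{d^{n_2-n_1}}\cdot(\text{root of unity})$, so once we fix one Galois orbit at the top level $n = n_{\max}$, all orbits at lower levels are images of it under $z \mapsto z^{d^k}$. Thus the number of Galois orbits among high-$|G_K(\gamma)|$ integral points is bounded by the number of \emph{levels} $n$ that can occur, which by the integrality analysis is at most $|S_{\text{fin}}|$: each admissible level consumes at least one "new" bad place in $S_{\text{fin}}$, via an argument that $v(\beta)$ being divisible by $d^n$ at some $v$, together with the relative-integrality forcing the support of $\gamma$ near $\alpha$'s support, can happen for at most $|S_{\text{fin}}|$ values of $n$. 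The polynomial factor $[K:\mathbb{Q}]^8$ and the constant $C_1$ absorb the cyclotomic degree discrepancies $[\mathbb{Q}(\zeta_m):\mathbb{Q}]$ versus $[K(\zeta_m):K]$ and the Kummer-degree defects (which are bounded in terms of $\beta$ and $d$ but we want uniformity, so we bound the defect by $d$ and the $\gcd$-type corrections crudely).

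**Main obstacle.**

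The main difficulty I anticipate is making the passage "large $|G_K(\gamma)|$ $\Rightarrow$ few admissible levels $n$ $\Rightarrow$ few Galois orbits" genuinely uniform in $\alpha$ and $\beta$ while keeping the $[K:\mathbb{Q}]^8$ exponent. Controlling $[K(\zeta_m):K]$ from below uniformly requires that $K \cap \mathbb{Q}(\zeta_\infty)$ not be too large, which is automatic but costs a factor polynomial in $[K:\mathbb{Q}]$; and the Kummer degree $[K(\zeta_m, \beta_0):K(\zeta_m)]$ can degenerate if $\beta$ is a high power in $K$, which is exactly the scenario producing multiple low-level orbits — so the bookkeeping of "how many $n$" must be done carefully, presumably by writing $\beta = \beta_1^{d^r}$ with $r$ maximal and then the levels split into the $\leq r$ "free" ones plus genuinely new ones, each of the latter type forced to interact with $S_{\text{fin}}$ through the integrality condition. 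Tying the count of new levels to $|S_{\text{fin}}|$ (rather than, say, $2|S_{\text{fin}}|$ or $|S_{\text{fin}}| + O(1)$) is where I expect the delicate part of the estimate to live, likely using that at a place $v \notin S$ the conditions $v(\gamma) \neq 0$ and $\gamma$ meeting a conjugate of $\alpha$ are mutually exclusive in a way that lets each level be charged to a distinct element of $S_{\text{fin}}$.
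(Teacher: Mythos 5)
Your proposal takes a genuinely different route from the paper, but it contains a gap at the central step, and the heuristic you propose to fill it does not actually lead to the $|S_{\text{fin}}|$ bound. The paper's proof does not count Galois orbits by levels $n$ in a cyclotomic--Kummer tower. Instead, the bound of $|S_{\text{fin}}|$ exceptional orbits comes from a completely different mechanism: Corollary~\ref{cor3.3} shows that for each non-archimedean place $v \in S$ (lying over a prime $p$), there is at most \emph{one} Galois orbit of points in $\mathcal{O}^-_\varphi(\beta)$ that is $v$-adically too close to $\alpha$ (in the precise sense $\log|\gamma - \alpha|_v^{-1} \ge \frac{\log p}{p-1}$), because two such $\gamma_1,\gamma_2$ would have $\gamma_1/\gamma_2$ a root of unity with $|1-\gamma_1/\gamma_2|_v$ impossibly small. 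Hence the exceptions are indexed by the non-archimedean places \emph{inside} $S$, not by ``levels consuming new bad places'' outside $S$ as you suggest. Your argument that ``each admissible level consumes at least one new bad place in $S_{\text{fin}}$'' is never made precise, and I do not see how to make it so: if $\gamma^{d^n} = \beta$ then $d^n\, v(\gamma) = v(\beta)$, which for $v \notin S$ with $v(\beta)=0$ just says $\gamma$ is a $v$-adic unit; this places no constraint tying distinct levels $n$ to distinct places of $S_{\text{fin}}$.

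More substantively, your proposal never uses the quantitative machinery that the paper's proof rests on. The non-exceptional orbits must still be bounded in size, and this is where the paper invokes Proposition~\ref{prop3.1} (an archimedean bound via linear forms in logarithms, Theorem~\ref{linearform}), Proposition~\ref{quntilogprop2} (quantitative logarithmic equidistribution following Favre--Rivera-Letelier), Proposition~\ref{propadelic} (convergence to the Arakelov--Zhang pairing), and finally Dobrowolski's height lower bound to derive a contradiction for large $D=[K:\mathbb{Q}]$. Your sketch gestures at ``positive canonical/Weil height'' but never uses it quantitatively, and the passage from large $|G_K(\gamma)|$ to a contradiction is entirely absent. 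The exponent $[K:\mathbb{Q}]^8$ in the theorem is an artifact of balancing the error terms in the equidistribution bound against Dobrowolski's $D^{-1}(\log D)^{-3}$ lower bound; your proposal offers no mechanism producing such a polynomial dependence. In short, the approach via cyclotomic--Kummer degrees and level-counting is not a replacement for the analytic argument, and the crucial claim that the number of levels is at most $|S_{\text{fin}}|$ is both unjustified and, I believe, not true in the form you state it.
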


The plan of our paper is as follows. In Section \ref{sec-prelim}, we set up our notation and give a brief overview of Berkovich spaces, the Arakelov-Zhang pairing, and the quantitative equidistribution theorem. Section \ref{sec-finiteness} contains a proof of the finiteness of the $S$-integral points in backward orbits. Finally, in Section \ref{sec-proof}, we provide a proof of our main theorem along with some auxiliary results. We note that the main ideas of our work are coherent with \cite{yap}.
\section{Preliminaries}\label{sec-prelim}
\subsection{Heights}\label{height}
Let $K$ be an algebraic number field and denote its ring of integers by $\mathcal{O}_K$. Let $M_K$ represent the set of places of $K$, i.e. the set of normalized, inequivalent absolute values in $K$. For each $v \in M_K$, we write $K_v$ for the completion of $K$ at the place $v$ and $\overline{K}_v$ be the algebraic closure of $K_v$.
When $v$ is an infinite (archimedean) place, we define the absolute value on $\mathbb{Q}$ by
\begin{equation}\label{7abc}
	|x|_v := |x|^{[K_v:\mathbb{R}]/[K:\mathbb{Q}]} \quad \text{for } x \in \mathbb{Q}.
\end{equation}
If $v$ is a finite place lying above a rational prime $p$, we set
\begin{equation}\label{7abcd}
	|x|_v := |x|_p^{[K_v:\mathbb{Q}_p]/[K:\mathbb{Q}]} \quad \text{for } x \in \mathbb{Q}.
\end{equation}
For each $p \in M_{\mathbb{Q}}$, we fix a normalized absolute value $|\cdot|_p$ as follows: if $p = \infty$, then $|\cdot|_p$ is the usual absolute value on $\mathbb{Q}$; if $p$ is a finite prime, then $|\cdot|_p$ is the $p$-adic absolute value normalized so that $|p|_p = 1/p$. With this setup, for any $x \in K$ and place $v$ of $K$ lying above $p$, we have
\begin{equation}\label{eq8}
	|x|_v := |N_{K_v/\mathbb{Q}_p}(x)|_p^{1/[K:\mathbb{Q}]}.
\end{equation}
These absolute values satisfy the product formula: $\prod_{v \in M_K} |x|_v = 1$ for all $x \in K^\times$.
More generally, if $L/K$ is a finite extension and $x \in L$, then $\prod_{v \in M_K} \prod_{\sigma} |\sigma(x)|_v = 1$, where the inner product is taken over all $[L:K]$ field embeddings $\sigma: L \to \mathbb{C}_v$. 

Now consider a point $x = (x_1 : x_2) \in \mathbb{P}^1(\overline{K})$, where $x_1, x_2 \in L$ for some finite extension $L/K$. The \emph{absolute logarithmic height} of $x$ is defined as
\begin{equation}\label{eq9}
	h(x) = \sum_{w \in M_L} N_w\log \max\left( |x_1|_w, |x_2|_w \right) = \sum_{v \in M_K} \sum_{w|v} N_w \log \max\left( |x_1|_w, |x_2|_w \right),
\end{equation}
where $N_w = [L_w:\mathbb{Q}_w]/[L:\mathbb{Q}]$. 

Further, for algebraic numbers $\alpha_1, \dots, \alpha_r$, the height satisfies the inequality:
\begin{equation}\label{height_upper_bound}
	h(\alpha_1 + \cdots + \alpha_r) \leq h(\alpha_1) + \cdots + h(\alpha_r) + \log r.
\end{equation}
Let $\varphi: \mathbb{P}^1\to \mathbb{P}^1$ be a rational map of degree $d\geq 2$ defined over $K$ and a place $v \in M_K$, let $\mu_{\varphi, v}$ be the equilibrium measure of $\varphi$ for $v$. For an integer $n\geq 1$, we denote by $\varphi^n$ to mean the $n$-time composition of $\varphi$ with itself. The Call-Silverman canonical height function $h_{\varphi}: \mathbb{P}^1(\overline{K}) \to \mathbb{R}$ relative to $\varphi$ is defined by
\begin{equation}\label{eqcsheight}
	h_{\varphi}(x) := \lim_{n\to \infty} \frac{h(\varphi^n(x))}{d^n}
\end{equation}
for all $x\in \mathbb{P}^1(\overline{K})$. It is shown that the limit in \eqref{eqcsheight} exists (see \cite{CS}). The canonical height is uniquely characterized by the following properties: for all $x\in \mathbb{P}^1(\overline{K})$,
\begin{align*}
	&h_{\varphi}\left(\varphi(x)\right) = d h_{\varphi}(x)\quad \mbox{and}\quad |h(x) - h_{\varphi}(x)|_v < C_{\varphi}
\end{align*}
where $C_{\varphi}$ is an absolute constant depending on $\varphi$. One of the basic properties of the canonical height is that $h_{\varphi}(x)=0$ if and only if $x$ is preperiodic (see \cite[Theorem 3.22]{sil}).

\subsection{$S$-integrality}\label{integrality}
Let $S$ be a finite subset of the set of places $M_K$ of a number field $K$, including all archimedean places and $\mathcal{O}_{K,S}$ be the ring of $S$-integers is defined by $$ \mathcal{O}_{K,S} = \{ x \in K \mid |x|_v \le 1 \text{ for all } v \notin S \}.
$$ The $v$-adic chordal metric on $\mathbb{P}^1(\mathbb{C})$ is defined for any $v \in M_K$ by
\begin{equation}
	\delta_v(x, y) = \frac{|x_1 y_2 - y_1 x_2|_v}{\max\{|x_1|_v, |x_2|_v\} \cdot \max\{|y_1|_v, |y_2|_v\}},
\end{equation}
where $x = (x_1 : x_2)$ and $y = (y_1 : y_2)$ represent points in homogeneous coordinates on $\mathbb{P}^1$.

Since $\delta_v(x, y)$ always lies between $0$ and $1$, this allows us to interpret $\mathcal{O}_{K, S}$ as the set of points $\gamma \in K$ such that for all places $v \notin S$, the $v$-adic chordal distance between $(\gamma : 1)$ and the point at infinity is maximal. More precisely, for all $v \notin S$, the condition $|\gamma|_v \leq 1$ holds if and only if $\delta_v((\gamma : 1), \infty) = 1$.

Define the local height function $\lambda_{x, v}(y) = -\log \delta_v(x, y)$ for $x, y \in \mathbb{P}^1(\mathbb{C}_v)$ (see \cite[Chapter 3]{sil}). Let $\alpha$ and $\beta$ be two points in $\mathbb{P}^1(\overline{K})$. We say that $\beta$ is $S$-integral with respect to $\alpha$ if for all $v \notin S$ and for all embeddings $\sigma, \sigma' : \overline{K} \hookrightarrow \mathbb{C}_v$ over $K$, we have $\lambda_{\sigma'(\alpha), v}(\sigma(\beta)) = 0$. If we identify a point $x \in \overline{K}$ with $(x : 1) \in \mathbb{P}^1(\overline{K})$, then the condition for a point $y \in \overline{K}$ to be $S$-integral relative to $x$ can be written in terms of absolute values as follows: for every place $v \notin S$ and every pair of embeddings $\sigma, \sigma'\in \text{Gal}(\overline{K}/K)$,
\begin{equation*}
	\begin{cases}
		|\sigma(y) - \sigma'(x)|_v \geq 1 \quad &\text{if } |\sigma'(x)|_v \leq 1, \\
		|\sigma(y)|_v \leq 1 \quad &\text{if } |\sigma'(x)|_v > 1.
	\end{cases}
\end{equation*}
\subsection{The Berkovich affine and projective lines}\label{berko}
In this section, we will review the definitions of the Berkovich affine and projective lines. For more details on these objects (see \cite{baker2010, berko}). In this section, let $(K, |\cdot|)$ denote either the field $K =\C$ with its usual absolute value, or an arbitrary algebraically closed field $K$ which is complete with respect to a non-trivial, non-archimedean absolute value. The Berkovich affine line $\mathbb{A}_{\mathrm{Berk},v}^1$ is defined to be the set of multiplicative seminorms on the polynomial ring $K[T]$ in one variable. We use $[\cdot]_x$ to denote the seminorm corresponding to the point $x\in \mathbb{A}_{\mathrm{Berk},v}^1$. Observe that given an element $a\in K$, we have evaluation seminorm $[f(T)]_a =|f(a)|$. The map $a \mapsto [\cdot]_a$ defines a dense embedding $K \hookrightarrow \mathbb{A}_{\mathrm{Berk},v}^1$ and hence this identifies each point $a\in K$ with its corresponding seminorm $[\cdot]_a$ in $\mathbb{A}_{\mathrm{Berk},v}^1$. Any element $f(T)\in K[T]$ extends to a function on $\mathbb{A}_{\mathrm{Berk},v}^1 \to \mathbb{R}_{\geq 0}$ and the topology on $\mathbb{A}_{\mathrm{Berk},v}^1$ is given the weakest topology such that all functions of the form $x\mapsto [f(T)]_x$ are continuous. This makes $\mathbb{A}_{\mathrm{Berk},v}^1$ into a locally compact, Hausdorff and path connected topological space. The Berkovich projective line $\mathbb{P}_{\mathrm{Berk},v}^1 = \mathbb{A}_{\mathrm{Berk},v}^1 \cup \{\infty\}$ of $\mathbb{A}_{\mathrm{Berk},v}^1$. Also, the dense inclusion map $K \hookrightarrow \mathbb{A}_{\mathrm{Berk},v}^1$ extends to a dense inclusion map $\mathbb{P}(K) \hookrightarrow \mathbb{P}_{\mathrm{Berk},v}^1$ defined by $(a:1)\mapsto [\cdot]_a$ and $(1:0)\mapsto \infty$. Since $\mathbb{A}_{\mathrm{Berk},v}^1$ is locally compact, $\mathbb{P}_{\mathrm{Berk},v}^1$ is compact.

If $K$ is archimedean, that is $K= \C$, then by the Gelfand-Mazur theorem in functional analysis, the evaluation seminorms are the only seminorms. Thus, $\mathbb{A}_{\mathrm{Berk},v}^1= \C$ and $\mathbb{P}_{\mathrm{Berk},v}^1 = \mathbb{P}^1(\C)$. 

When $K$ is non-archimedean, the inclusion $\mathbb{P}^1(K) \hookrightarrow \mathbb{P}_{\mathrm{Berk},v}^1$ is not surjective. For example, each closed disk $D(a, r) = \{z\in K\mid |z-a|\leq r\}\; $ for $a\in K$ and $r\in |K|$ defines a point $\zeta_{a, r}\in \mathbb{P}_{\mathrm{Berk},v}^1$ corresponding to the sup-norm $[f(T)]_{\zeta_{a,r}} = \sup_{z\in D(a, r)}|f(z)|$. Note that each $a\in K$ can also be written as the point $\zeta_{a, 0}\in \mathbb{A}^1(\C) \hookrightarrow \mathbb{P}_{\mathrm{Berk},v}^1$ corresponding to a disk of radius zero. These are known as Type I points, or classical points. When $r$ is an element of value group, we say that $\zeta_{a, r}$ is a Type II point, otherwise $\zeta_{a, r}$ is a Type III point. The remaining points of $\mathbb{A}_{\mathrm{Berk},v}^1$ are called Type IV points, and they correspond to a nested intersection of disks $\cdots D_n \subseteq \cdots \subseteq D_1$ such that $\cap_{n=1}^{\infty} D_n = \emptyset$ but their radii do not go to zero. Moreover, if we fix a point $\zeta \in \mathbb{P}^1_{\mathrm{Berk}}(\mathbb{C}_v)$,
the set $[x,\zeta] \cap [y,\zeta] \cap [x,y]$ consists of a single point, which we denote by $x \vee_\zeta y$. We write $\vee$ for $\vee_\infty$ and $\vee_G$ for $\vee_{\zeta_G}$.

The Laplacian $\Delta$ on the Berkovich projective line $\mathbb{P}_{\mathrm{Berk},v}^1$ is an operator which assigns to a continuous function $f: \mathbb{P}_{\mathrm{Berk},v}^1 \to \R\cup \{\pm \infty\}$ a signed Borel measure $\Delta f$ on $\mathbb{P}_{\mathrm{Berk},v}^1$. For example, in the archimedean case $K = \C$,  assume that $f: \mathbb{P}_{\mathrm{Berk},v}^1 \to \R$ is twice continuously real-differentiable, we identify $\mathbb{P}_{\mathrm{Berk},v}^1= \mathbb{P}^1(\C) = \C\cup \{\infty\}$ by the affine coordinate $z= (z:1)\in \C$, where $\infty = (1:0)$, and let $z= x+iy$ for $x, y\in \R$. Then 
\[\Delta f(z) = -\frac{1}{2 \pi} \left(\frac{\partial^2}{\partial^2 x} + \frac{\partial^2}{\partial^2 y}\right)f(z)dx dy.\]
The Berkovich space allows one to develop  a suitable analogue of the Laplacian for non-archimedean places. It has been shown by Baker-Rumley \cite{baker2010}, Favre-Rivera-Litelier \cite{favre}, that the non-archimedean Berkovich projective line $\mathbb{P}_{\mathrm{Berk},v}^1$ carries an analytic structure, and in particular a Laplacian, which is very similar to its archimedean counterpart. For a detailed construction of Lapalcian on $\mathbb{P}_{\mathrm{Berk},v}^1$  (see \cite{baker2010}). Also, this measure valued Laplacian satisfy the self adjoint property:
\[\int f \Delta g = \int g \Delta f.\]

Let $K$ be a number field. For each $v\in M_K$, there is a distribution-valued Laplacian operator $\Delta$ on $\mathbb{P}_{\mathrm{Berk}, v}^1$. The function $\log^{+}|z|_v:=\log \max\{|z|_v,1\}$ on $\mathbb{P}^1(\C_v)$ extends naturally to a continuous real valued function $\mathbb{P}_{\mathrm{Berk}, v}^1\to \R\cup\{\infty\}$, and the Laplacian is normalized such that 
$$\Delta \log^{+}|z|_v = \delta_0-\delta_{\infty}$$ on $\mathbb{P}_{\mathrm{Berk},v}^1$, where $\delta_0= m_{\mathbb{S}^1}$ is the Lebesgue probability measure on the unit circle $\mathbb{S}^1$ when $v$ is archimedean, and $ \delta_0$ is a point mass at the Gauss point $\zeta_{0,1}$ of $\mathbb{P}_{\mathrm{Berk},v}^1$ when $v$ is non-archimedean.

\subsection{The Arakelov-Zhang pairing}
For two rational maps $\varphi$ and $\psi$ defined on $\mathbb{P}^1$ over a number field $K$, each having degree at least two, the Arakelov-Zhang pairing $\langle \varphi, \psi \rangle$ captures a deep relationship between their respective canonical height functions $h_{\varphi}$ and $h_{\psi}$. 

To define Arakelpv-Zhang pairing we recall some notation (see \cite{petsche2012}). Let $K$ be a number field and let $M_K$ denote the set of places of $K$. For each $v \in M_K$, let $K_v$ be the completion of $K$ at $v$, and let $\mathbb{C}_v$ be the completion of an algebraic closure of $K_v$. Let $\mathcal{L}$ be a line bundle on $\mathbb{P}^1$. For each place $v \in M_K$, the line bundle $\mathcal{L}$ extends to a line bundle $\mathcal{L}_v$ on the Berkovich projective line $\mathbb{P}^1_{\mathrm{Berk},v}$. A continuous metric $\|\cdot\|_v$ on $\mathcal{L}_v$ is a continuous function $\|\cdot\|_v : \mathcal{L}_v \to \mathbb{R}_{\ge 0}$ which induces a norm on each fiber
$\mathcal{L}_z$ as a $\mathbb{C}_v$-vector space. The metric $\|\cdot\|_v$ is said to be \emph{semi-positive} if for any section $s$, the function $\log |s(z)|$ on $\mathbb{P}^1_{\mathrm{Berk},v}$ is subharmonic. It is said to be \emph{integrable} if $\log |s(z)|:\mathbb{P}^1 \to \mathbb{R} \cup \{-\infty\}$ can be written as the difference of two subharmonic functions.      

For $\mathcal{L} = \mathcal{O}(1)$, there is a standard metric $\|\cdot\|_{\mathrm{st},v}$ defined by
$$\|s(z)\|_{\mathrm{st},v} = \frac{|s(z_1,z_2)|_v}{\max\{|z_1|_v, |z_2|_v\}},$$ where $s(z_1,z_2)$ is the linear homogeneous polynomial in $\mathbb{C}_v[z_1,z_2]$ representing the section $s$, and
$z = (z_1:z_2) \in \mathbb{P}^1(\mathbb{C}_v)$. An integrable adelic metric on $\mathcal{O}(1)$ is a family of metrics $(\|\cdot\|_v)_{v \in M_K}$ such that $\|\cdot\|_v$ is a continuous integrable metric on $\mathcal{O}(1)$ over $\mathbb{P}^1_{\mathrm{Berk},v}$ for each $v$, and $\|\cdot\|_v = \|\cdot\|_{\mathrm{st},v}$ for all but finitely many places $v$.

For a global section $s \in \Gamma(\mathbb{P}^1, \mathcal{O}(1))$ and a point $z \in \mathbb{P}^1(K) \setminus \{\text{div}(s)\}$, the height is given by:
\begin{equation} \label{eqheight}
	h_L(z) = \sum_{v \in M_K} N_v \log \|s(z)\|_{st,v}^{-1}
\end{equation}
where $N_v = [K_v : \mathbb{Q}_v]/[K : \mathbb{Q}]$.
Let $\varphi: \mathbb{P}^1 \to \mathbb{P}^1$ be a rational function of degree $d \geq 2$ defined over $K$, and let $\epsilon : \mathcal{O}(d) \overset{\sim}{\to} \varphi^* \mathcal{O}(1)$ be a $K$-isomorphism, known as a $K$-polarization. The canonical adelic metric associated to $(\varphi, \epsilon)$ on $\mathcal{O}(1)$ is then the family $\|\cdot\|_{\varphi, \epsilon} = (\|\cdot\|_{\varphi, \epsilon, v})$ defined for all $v \in M_K$. The canonical height relative to $\varphi$ is defined by:
\begin{equation} \label{eqheight1}
	h_{\varphi}(z) = \sum_{v \in M_K} N_v \log \|s(z)\|_{\varphi, \epsilon, v}^{-1}
\end{equation}
for all $z \in \mathbb{P}^1(K) \setminus \{\text{div}(s)\}$.
Let $\alpha \in K$ and define the section $s(z) = z_1 - \alpha z_2$ of $\mathcal{O}(1)$. Define $L_{\alpha}$ to be the adelic line bundle where, for each place $v \in M_K$, the metric is given by $\log \|s(z)\|_{st, v}^{-1} = \lambda_{\alpha, v}(z)$. 
Let $\mathcal{P} \subset \mathbb{P}^1(K)$ be a finite $\mathrm{Gal}(\overline{K}/K)$-invariant set, and let $s$ be a section of $\mathcal{O}(1)$ such that $\mathrm{div}(s) \notin \mathcal{P}$.
The height of $\mathcal{P}$ with respect to the adelic line bundle $L_{\alpha}$ is defined by $$h_{\mathcal{L}_{\alpha}}(\mathcal{P}) = \frac{1}{|\mathcal{P}|}
\sum_{z \in \mathcal{P}} \sum_{v \in M_K} N_v \log \|s(z)\|_v^{-1},$$ where $N_v = \frac{[K_v:\mathbb{Q}_v]}{[K:\mathbb{Q]} }$.

For a rational map $\varphi : \mathbb{P}^1 \to \mathbb{P}^1$  defined over $K$
with degree $d \ge 2$, let $\mathcal{L}_\varphi$ denote the canonical adelic line bundle associated to $\varphi$ (see Section~3.5 of \cite{petsche2012}). The associated height function $h_{L_\varphi}$ coincides with the canonical
height $h_\varphi$.

For any two rational maps $\varphi : \mathbb{P}^1 \to \mathbb{P}^1$ and $\psi : \mathbb{P}^1 \to \mathbb{P}^1$, defined over $K$, $L_\varphi$ and $L_\psi$ be the adelic line bundle associated to $\varphi$ and $\psi$ respectively. Let $s, t \in \Gamma(\mathbb{P}^1, \mathcal{O}(1))$ be two sections with $\operatorname{div}(s) \neq \operatorname{div}(t)$. We define the local Arakelov--Zhang pairing of $\mathcal{L}_\varphi$ and $\mathcal{L}_\psi$, with respect to the sections $s$ and $t$, by
\begin{align*}
    \langle \mathcal{L}_\varphi, \mathcal{L}_\psi \rangle_{s, t, v} &= - \int \left\{ \log \| s(z) \|_{\varphi,\epsilon_{\varphi}} \right\} \, d\Delta \left\{ \log \| t(z) \|_{\psi,\epsilon_{\psi}} \right\} \\
    &= \log \| s(\operatorname{div}(t)) \|_{\varphi,\epsilon_{\varphi}} - \int \log \| s(z) \|_{\varphi,\epsilon_{\varphi}} \, d\mu_{\psi}(z),
\end{align*}
where $\epsilon_{\varphi}$ and $\epsilon_{\psi}$ are any polarizations of
$\varphi$ and $\psi$, respectively and $\Delta \{-\log \|s(z)\|_{\varphi, \epsilon}\}=\del_{\mathrm{div}(s)}(z)-\mu_\varphi(z)$. Note that $\langle \mathcal{L}_\varphi, \mathcal{L}_\psi \rangle_{s, t, v}$ does not depend on the choice of polarizations $\epsilon_{\varphi}$ and $\epsilon_{\psi}$. The global Arakelov-Zhang pairing is then defined as 
\begin{align*}
\langle \mathcal{L}_\varphi, \mathcal{L}_\psi \rangle &= \sum_{v \in M_K} N_v \, \langle \mathcal{L}_\varphi, \mathcal{L}_\psi \rangle_{s,t,v} + h_{\mathcal{L}_{\varphi}}\!\left( \operatorname{div}(t) \right) + h_{\mathcal{L}_{\psi}}\!\left( \operatorname{div}(s) \right)\\
&=\sum_{v \in M_K} N_v \left( - \int \log \| s(z) \|_{\varphi,\epsilon_{\varphi},v} \, d\mu_{\psi,v}(z) - \log \| t(\operatorname{div}(s)) \|_{\psi,\epsilon_{\psi},v} \right).
\end{align*} 
One of the key result we require related to the Arakelov-Zhang pairing is the following.
\begin{theorem}[\cite{petsche2012}, Corollary 12]\label{lemmapairing}
    Let $(z_n)_{n\geq 0}$ be a sequence of distinct points in $\mathbb{P}^1(\overline{K})$ such that $h_{\mathcal{L}_{\psi}}(z_n) \to 0$ then $h_{\mathcal{L}_{\varphi}}(z_n) \to \langle \mathcal{L}_\varphi, \mathcal{L}_\psi \rangle$.
\end{theorem}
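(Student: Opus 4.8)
The plan is to compare the two canonical heights place by place and then pass to the limit via the arithmetic equidistribution theorem, using the hypothesis $h_{L_\psi}(z_n)\to 0$ twice: once to force equidistribution of the Galois orbits of the $z_n$, and once to kill the $\psi$-height contribution itself.

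First I would observe that, for each $v\in M_K$, the canonical adelic metrics $\|\cdot\|_{\varphi,\epsilon_\varphi,v}$ and $\|\cdot\|_{\psi,\epsilon_\psi,v}$ are two continuous metrics on the \emph{same} line bundle $\mathcal O(1)$ over $\mathbb P^1_{\mathrm{Berk},v}$, so their ratio is a continuous real-valued function
\[
u_v:=\log\frac{\|\cdot\|_{\varphi,\epsilon_\varphi,v}}{\|\cdot\|_{\psi,\epsilon_\psi,v}}=\log\|s\|_{\varphi,\epsilon_\varphi,v}^{-1}-\log\|s\|_{\psi,\epsilon_\psi,v}^{-1}
\]
on $\mathbb P^1_{\mathrm{Berk},v}$, independent of the auxiliary section $s\in\Gamma(\mathbb P^1,\mathcal O(1))$ (the logarithmic singularities at $\operatorname{div}(s)$ cancel), bounded because $\mathbb P^1_{\mathrm{Berk},v}$ is compact, and identically zero for all but finitely many $v$ by the adelic condition. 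Choosing $\operatorname{div}(s)$ to avoid the countably many $\operatorname{Gal}(\ol K/K)$-conjugates of the $z_n$ and writing $[z_n]_v:=\frac1{|G_K(z_n)|}\sum_{w\in G_K(z_n)}\delta_w$ for the normalized Galois-orbit measure on $\mathbb P^1_{\mathrm{Berk},v}$, Galois averaging in the height formula \eqref{eqheight1} yields the exact identity
\[
h_{L_\varphi}(z_n)=h_{L_\psi}(z_n)+\sum_{v\in M_K}N_v\int u_v\,d[z_n]_v .
\]

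Next I would invoke equidistribution. Since the $z_n$ are distinct, no finite subset of $\mathbb P^1(\ol K)$ contains infinitely many of them, so $(z_n)$ is a generic sequence; and $h_{L_\psi}(z_n)=h_\psi(z_n)\to 0$, which together with the comparison between $h$ and $h_\psi$ and Northcott's theorem also forces $[K(z_n):K]\to\infty$, so there is no obstruction to applying the equidistribution theorem for points of small dynamical height on $\mathbb P^1$ (Bilu and Szpiro--Ullmo--Zhang in the archimedean case; Baker--Rumley \cite{baker2010}, Favre--Rivera-Letelier \cite{favre}, and Chambert-Loir in general): for every $v\in M_K$ one has $[z_n]_v\to\mu_{\psi,v}$ weakly on $\mathbb P^1_{\mathrm{Berk},v}$. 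As each $u_v$ is continuous and only finitely many terms are nonzero, I can pass to the limit term by term and, using $h_{L_\psi}(z_n)\to 0$, obtain $\lim_n h_{L_\varphi}(z_n)=\sum_{v\in M_K}N_v\int u_v\,d\mu_{\psi,v}$.

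It then remains to identify this constant with $\langle L_\varphi,L_\psi\rangle$. Expanding $u_v=\log\|s\|_{\varphi,\epsilon_\varphi,v}^{-1}-\log\|s\|_{\psi,\epsilon_\psi,v}^{-1}$ and comparing with the last displayed formula for $\langle L_\varphi,L_\psi\rangle$ in Section~\ref{sec-prelim}, the claim reduces to the single identity
\[
\sum_{v\in M_K}N_v\int \log\|s(z)\|_{\psi,\epsilon_\psi,v}^{-1}\,d\mu_{\psi,v}(z)=-\,h_{L_\psi}\!\left(\operatorname{div}(s)\right),
\]
which is precisely the vanishing of the diagonal pairing $\langle L_\psi,L_\psi\rangle=0$; I would either quote this from \cite{petsche2012} or derive it from the functional equation $\psi^{*}\|\cdot\|_{\psi,\epsilon_\psi,v}=\|\cdot\|_{\psi,\epsilon_\psi,v}^{\otimes d}$, the invariance $\psi^{*}\mu_{\psi,v}=d\,\mu_{\psi,v}$, and the product formula. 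The main (and essentially only) nontrivial ingredient is the equidistribution theorem applied simultaneously at all places of $K$, which is where the distinctness hypothesis enters; everything else is bookkeeping of normalizations in the place-by-place comparison and in the final identification of the limiting constant, the crucial auxiliary input there being $\langle L_\psi,L_\psi\rangle=0$. I do not anticipate any further obstacle, since this statement is exactly the form in which the equidistribution machinery delivers the convergence of canonical heights to the Arakelov--Zhang pairing.
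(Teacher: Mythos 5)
Your argument is correct and is essentially the standard proof of this result: the paper itself gives no proof but imports the statement from \cite{petsche2012} (Corollary 12), whose argument likewise decomposes $h_{L_\varphi}-h_{L_\psi}$ into local integrals of the continuous function comparing the two canonical metrics, applies equidistribution of points of small $\psi$-height at every place (distinctness plus Northcott giving growing Galois orbits), and identifies the limiting constant with $\langle L_\varphi,L_\psi\rangle$ via the vanishing $\langle L_\psi,L_\psi\rangle=0$. Your sign bookkeeping is consistent with the paper's displayed formula for the pairing, so no changes are needed.
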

\subsection{Quantitative equidistribution}
At first we introduce some definition following \cite{favre}. Let $K$ be a number field, $K_v$ its completion for a place $v$ and $\C_v$ the completion of $\overline{K}_v$. Let $M_K$ denote the places of $v$. For each $v\in M_K$, let $\rho_v$ be a measure on $\mathbb{P}_{\mathrm{Berk}, v}^1$. We say that $\rho_v$ has a continuous potential if $\rho_v = \lambda_v+\Delta g$ for some continuous function $g$ which is the difference of two continuous subharmonic functions. Here note that $\lambda_v$ is the Gauss point $\zeta_{0,1}$ for non-archimedean $v$ and is uniform probability measure supported on the unit circle for archimeden $v$. We say that $\rho_v$ has a H\"older-continuous potential with exponent $\kappa$ with respect to a metric $d$ if there exists a constant $C>0$ such that 
\[|g(z) - g(w)|\leq C d(z, w)^{\kappa}\] for all classical points $z, w\in \mathbb{P}^1(\C_v)$.  

We call $\rho = (\rho_v)_{v\in M_K}$ an {\em adelic measure} if for each $v\in M_K,$ if each $\rho_v$ has a continuous potential and $\rho_v = \lambda_v$ for all but finitely many $v$. Given two measures $\rho_v, \sigma_v$ on $\mathbb{P}_{\mathrm{Berk}, v}^1$, we define a bilinear form
\[(\rho_v, \sigma_v)_v = -\int_{\mathbb{A}_{\mathrm{Berk},v}^1\times \mathbb{A}_{\mathrm{Berk},v}^1 \setminus{\mathrm{Diag}}} \log |x-y|_v d\rho_v(z)d\sigma_v(y)\]
where $\mathbb{A}_{\mathrm{Berk}, v}^1$ denote the Berkovich affine line over $\C_v$ and $\mathrm{Diag} = \{(x, x)\mid x\in \C_v\}$ is the diagonal classical points. This integral exists if both $\rho_v, \sigma_v$ either have a continuous potential or are probability measures supported on a finite subset of $\mathbb{P}^1(\overline{K})$.

For $\alpha \in \mathbb{P}^1(\overline{K})$, let $G_K(\alpha) = \mbox{Gal}(\overline{K}/K)$-conjugates of $\alpha$ and $[\alpha]$ denote the probability measure supported equally on the Galois conjugates of $\alpha$ over $K$, that is,
\begin{equation}
[\alpha] = \frac{1}{|G_K(\alpha)|}\sum_{z\in G_K(\alpha)}\delta_z
\end{equation}
where $\delta_z$ denotes the Dirac measure at $z$, which for $z\in \mathbb{P}^1(\overline{K})$ we interpret as the adelic measure of the point mass at $z$ each place. Then the canonical height $h_{\rho}: \mathbb{P}^1(\overline{K}) \to \mathbb{R}$ associated to $\rho$ is defined to be
\begin{equation}
h_{\rho}(\alpha) = \frac{1}{2}\sum_{v\in M_K} ((\rho-[\alpha], \rho-[\alpha]))_v
\end{equation}
where $((\cdot, \cdot))_v = N_v(\cdot, \cdot)_v$ with $N_v= [K_v:\Q_v]/[K:\Q]$. Note that for the standard measure, $((\lambda, \lambda))_v = (([\alpha], [\alpha]))_v =0$ and $((\lambda, [\alpha]))_v = \log^{+}\|\alpha\|_v$, and so $h_{\lambda} =h$ coincides with the usual absolute logarithmic Weil height. From \cite[Theorem 4]{favre}, if $\rho = (\rho_{\varphi, v})_{v\in M_K}$ is the adelic set of canonical measure associated to iteration of a rational map $\varphi$, then $h_{\rho} = h_{\varphi}$ is the usual Call-Silverman dynamical height. 

The set $\mathbb{H}_v:=\mathbb{P}^1_{\mathrm{Berk},v} (\mathbb{C}_v)\setminus \mathbb{P}^1(\mathbb{C}_v)$ is called the \emph{hyperbolic space} over $\mathbb{C}_v$. For an infinite place $v$, we say that a continuous function
$f : \mathbb{P}^1(\mathbb{C}_v) \to \mathbb{R}$ is of class $\mathcal{C}^k_{\mathrm{sph}}$
if it is $\mathcal{C}^k$ with respect to the spherical metric
$$d_{\mathrm{sph}}(x,y)=\frac{|x_1y_2 - x_2y_1|_v}
{\sqrt{|x_1|_v^2+|x_2|_v^2}\,\sqrt{|y_1|_v^2+|y_2|_v^2}},$$ where $x=(x_1:x_2)$ and $y=(y_1:y_2)$. For a finite place $v$, we say that
$f : \mathbb{P}^1_{\mathrm{Berk},v} \to \mathbb{R}$ is of class
$\mathcal{C}^k_{\mathrm{sph}}$ if it is locally constant outside of a finite subtree
$T \subset \mathbb{H}_v$, and $T$ is a finite union of segments on which
$f$ is of the usual class $\mathcal{C}^k$.

Given $f$ of class $\mathcal{C}_{\mbox{sph}}^k$ for $k\geq 1$, we define
$$\langle f, f\rangle_v = \int_{\C} \left(\frac{\partial f}{\partial x}\right)^2+\left(\frac{\partial f}{\partial y}\right)^2dxdy $$ if $v$ is archimedean. If $v$ is non-archimedean, we fix a base point $S_0\in \mathbb{H}_v$ and let $\partial f (S)$ be the derivative of $f$ restricted to the segment $[S_0, S]$. Then we define
\begin{equation} \label{Dpro}
\langle f, f\rangle _v = \int_{\mathbb{P}^1(\mathbb{C}_v)} (\partial f )^2 d\lambda.
\end{equation}
We can now state the quantitative equidistribution result of Favre-Rivera-Litelier (\cite{favre}).

\begin{proposition}\label{quantequi}
 Let $\rho = (\rho_v)_{v \in M_K}$ be an adelic measure where each $\rho_v$ has Hölder-continuous potentials of exponent $\kappa \leq 1$ with respect to the spherical metric. Fix any $\delta > 0$. Then there exists a constant $C_2 > 0$, depending only on the $\rho$ such that for all places $v$ and all functions $f$ of class $\mathcal{C}^1_{\mathrm{sph}}$ on $\mathbb{P}^1_{\mathrm{Berk},v}$, and for all finite $\emph{Gal}(\overline{K}/K)$-invariant sets $\mathcal{P}$, we have
\begin{equation}\label{equid}
    \left| \frac{1}{|\mathcal{P}|} \sum_{z \in \mathcal{P}} f(z) - \int_{\mathbb{P}^1_{\mathrm{Berk},v}} f \, d\rho_v \right|_v
\leq \frac{\mathrm{Lip_{sph}}(f)}{|\mathcal{P}|^{1/\kappa}} + \left(2 h_\rho(\mathcal{P}) + C_2 \frac{\log |\mathcal{P}|}{\sqrt{|\mathcal{P}|}}\right)^{1/2} \langle f,f\rangle _v^{1/2},
\end{equation}
where $\mathrm{Lip_{sph}}(f)$ is the Lipschitz constant for $f$ with respect to the spherical metric. 
\end{proposition}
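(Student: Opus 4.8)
Proposition~\ref{quantequi} is the quantitative equidistribution theorem of \cite{favre}, so the plan is to carry out that energy–pairing argument in the normalizations fixed above: the bound \eqref{equid} splits into an analytic half (the Dirichlet energy $\langle f,f\rangle_v$) and an arithmetic half (a mutual energy of measures) joined by Cauchy--Schwarz, plus a regularization step that produces the Lipschitz term. Fix a place $v$ and write $[\mathcal{P}]=\frac{1}{|\mathcal{P}|}\sum_{z\in\mathcal{P}}\delta_z$ for the uniform probability measure on the finite $\mathrm{Gal}(\overline{K}/K)$-invariant set $\mathcal{P}$, viewed on $\mathbb{P}^1_{\mathrm{Berk},v}$. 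Since $[\mathcal{P}]-\rho_v$ has total mass $0$ it has a potential $g$ with $\Delta g=[\mathcal{P}]-\rho_v$, given by $g(x)=-\int\log|x-y|_v\,d([\mathcal{P}]-\rho_v)(y)$ up to normalization at $\zeta_{0,1}$ (or at $\infty$ when $v$ is archimedean). The self-adjointness $\int f\,\Delta g=\int g\,\Delta f$ recalled in Section~\ref{berko}, together with Cauchy--Schwarz for the Dirichlet form underlying $\langle\cdot,\cdot\rangle_v$, gives
\[
\Big|\,\frac{1}{|\mathcal{P}|}\sum_{z\in\mathcal{P}}f(z)-\int f\,d\rho_v\,\Big| \le \langle f,f\rangle_v^{1/2}\,\langle g,g\rangle_v^{1/2},\qquad \langle g,g\rangle_v=\big([\mathcal{P}]-\rho_v,\,[\mathcal{P}]-\rho_v\big)_v .
\]
So the task reduces to an upper bound on the mutual energy $\big([\mathcal{P}]-\rho_v,[\mathcal{P}]-\rho_v\big)_v$, plus the care needed to make this chain of identities legitimate when $[\mathcal{P}]$ is atomic.

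For the energy bound I would use the global identity built into the definition of $h_\rho$, namely $2h_\rho(\mathcal{P})=\sum_{w\in M_K}\big(\!\big(\rho_w-[\mathcal{P}],\,\rho_w-[\mathcal{P}]\big)\!\big)_w$, to express the $v$-term as $2h_\rho(\mathcal{P})$ minus the sum over $w\ne v$. If $[\mathcal{P}]$ had continuous potentials, every local term would be $\ge 0$ and the $v$-term would be $\le 2h_\rho(\mathcal{P})$ outright; the only defect is the negative self-interaction of the atoms, which across all places assembles, via the product formula, into $\tfrac{1}{|\mathcal{P}|^2}\log|\mathrm{disc}(\mathcal{P})|$ up to terms already present, where $\mathrm{disc}(\mathcal{P})$ is (a power of) the discriminant of the minimal polynomial of a generator of $\mathcal{P}$. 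Being a nonzero integer, $|\mathrm{disc}(\mathcal{P})|\ge 1$, and combined with a Mahler-type inequality this bounds the defect below by $-C_2\log|\mathcal{P}|/\sqrt{|\mathcal{P}|}$, so that $\big([\mathcal{P}]-\rho_v,[\mathcal{P}]-\rho_v\big)_v\le 2h_\rho(\mathcal{P})+C_2\log|\mathcal{P}|/\sqrt{|\mathcal{P}|}$.

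To legitimize the pairing and to produce the first error term, I would regularize $[\mathcal{P}]$: replace each atom $\delta_z$ by the uniform measure on the ball $\zeta_{z,r}$ of a small radius $r$, obtaining a measure $[\mathcal{P}]_r$ with continuous potential for which Cauchy--Schwarz is valid. Then the $f$-integral changes by at most $\mathrm{Lip_{sph}}(f)\cdot r$, since each regularizing set has spherical diameter $O(r)$, while $\langle g,g\rangle_v$ changes by $O(r^{\kappa})$ (from the Hölder-$\kappa$ potential of $\rho_v$) plus $O(|\mathcal{P}|^{-1}\log(1/r))$ (the self-energy of the $|\mathcal{P}|$ balls); taking $r=|\mathcal{P}|^{-1/\kappa}$ turns the $f$-error into $\mathrm{Lip_{sph}}(f)\,|\mathcal{P}|^{-1/\kappa}$ and absorbs the remaining errors into the terms already present in \eqref{equid}. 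Assembling the three pieces yields \eqref{equid}, with $C_2$ depending only on the Hölder data of $\rho$.

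The main obstacle is the second paragraph: non-negativity of a local mutual energy is available only for measures with continuous potentials, so the regularization and the discriminant estimate have to be run simultaneously, and one must show that the competition between the self-energy of the atoms ($\sim |\mathcal{P}|^{-1}\log(1/r)$) and the arithmetic lower bound from $|\mathrm{disc}(\mathcal{P})|\ge 1$ closes with exactly the exponent $\tfrac12$ appearing in $\log|\mathcal{P}|/\sqrt{|\mathcal{P}|}$; the Hölder hypothesis on the potentials of $\rho$ is precisely what keeps all the place-by-place error terms uniform in $v$, so that $C_2$ is independent of $v$.
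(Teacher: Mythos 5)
The paper's own proof of this proposition is simply the citation ``See Theorem 7 of \cite{favre},'' so you are reconstructing the Favre--Rivera-Letelier argument rather than comparing against a worked-out proof in the text. Your sketch captures the right ingredients of that argument: the reduction by self-adjointness of the Berkovich Laplacian together with Cauchy--Schwarz for the Dirichlet pairing to an energy estimate on $[\mathcal{P}]-\rho_v$; the decomposition of the global energy $\sum_w\big(\!\big(\rho_w-[\mathcal{P}],\rho_w-[\mathcal{P}]\big)\!\big)_w$ into $2h_\rho(\mathcal{P})$ plus diagonal corrections; the use of the product formula and the arithmetic lower bound $|\mathrm{disc}(\mathcal{P})|\ge 1$ to control those corrections; and the regularization by smearing each atom over a ball of radius $r$, optimized at $r=|\mathcal{P}|^{-1/\kappa}$ to balance the Lipschitz and H\"older errors. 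The one step that needs reordering: the displayed Cauchy--Schwarz inequality cannot be applied directly to the potential $g$ of $[\mathcal{P}]-\rho_v$, because $[\mathcal{P}]$ is atomic and hence $g$ has infinite Dirichlet energy, so $\langle g,g\rangle_v$ is not defined. The regularization you invoke in your third paragraph must therefore be the first move, with self-adjointness and Cauchy--Schwarz applied to the potential of $[\mathcal{P}]_r-\rho_v$, and the three resulting error terms tracked from that point on. You essentially flag this tension yourself in the closing paragraph, so it reads as an ordering issue rather than a gap; with the regularization moved to the front, your outline is a faithful account of the cited proof.
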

\begin{proof}
    See Theorem 7 of \cite{favre}.
\end{proof}
Next we derive a similar bound to that in \eqref{equid} for $f(z) = \lambda_{\tau,v}(z)$, where $0 < \tau <1$ and $\lambda_{\tau,v}(z):=\log^+|z|_v+\log^+|\alpha|_v-\log \max\{\tau, |z-\alpha|_v\}$.

\begin{proposition}\label{corollary2.2}
\emph{(}Keep the notation $K, v, \mathcal{P}$ and $\kappa$ as above.\emph{)}  Let $\varphi$ be a rational map of degree $d \geq 2$ defined on $\mathbb{P}^1(K)$ and let $(\mu_{\varphi,v})_v$ be an adelic measure where each equlibrium measure $\mu_{\varphi,v}$ has Hölder-continuous potentials of exponent $\kappa \leq 1$ with respect to the spherical metric. Then for all finite $\emph{Gal}(\overline{K}/K)$-invariant sets $\mathcal{P}$, there exists a constant $C_3 >0$ (depending on $\varphi$) such that 
\begin{equation*}
    \left| \frac{1}{|\mathcal{P}|} \sum_{z \in \mathcal{P}} \lambda_{\tau,v}(z) - \int \lambda_{\tau,v}(z) d\mu_{\varphi,v} \right|_v \leq C_3 \left( \frac{1}{|\mathcal{P}|^{1/2}} + \left( h_\varphi(\mathcal{P}) + \frac{\log|\mathcal{P}|}{|\mathcal{P}|} \right)^{1/2} \right).
\end{equation*}
\end{proposition}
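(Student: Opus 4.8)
I would apply the quantitative equidistribution estimate \eqref{equid} of Proposition~\ref{quantequi} — with the adelic measure $\rho=(\mu_{\varphi,v})_{v\in M_K}$, whose potentials are H\"older of exponent $\kappa\le1$ by hypothesis and for which $h_\rho=h_\varphi$ by \cite[Theorem~4]{favre} — to the test function $f=\lambda_{\tau,v}$. Two preliminary points are needed: that $\lambda_{\tau,v}$, or a harmless modification of it, lies in the class $\mathcal{C}^1_{\mathrm{sph}}$, and that one has good bounds for $\mathrm{Lip}_{\mathrm{sph}}(\lambda_{\tau,v})$ and for the Dirichlet energy $\langle\lambda_{\tau,v},\lambda_{\tau,v}\rangle_v$. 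For the first: the summand $\log^+|\alpha|_v$ is constant in $z$ and so cancels in $\frac1{|\mathcal{P}|}\sum_{z\in\mathcal{P}}\lambda_{\tau,v}(z)-\int\lambda_{\tau,v}\,d\mu_{\varphi,v}$, so I may work with $\log^+|z|_v-\log\max\{\tau,|z-\alpha|_v\}$. By Jensen's formula (for $v$ archimedean) and the explicit non-archimedean Green's functions, $z\mapsto\log\max\{\tau,|z-\alpha|_v\}$ is a potential of the uniform probability measure $\mu_{\alpha,\tau,v}$ on the ``sphere'' $\{|w-\alpha|_v=\tau\}$, while $\log^+|z|_v$ is a potential of $\lambda_v$; both are continuous and bounded, and since $\log^+|z|_v-\log\max\{\tau,|z-\alpha|_v\}=\log^+|z|_v-\log|z-\alpha|_v\to0$ as $z\to\infty$, the function $\lambda_{\tau,v}$ is globally bounded with no singularity at $\infty$. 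For $v$ finite the resulting function on $\mathbb{P}^1_{\mathrm{Berk},v}$ is locally constant outside the finite tree spanned by $0,\alpha,\infty,\zeta_{\alpha,\tau}$ and piecewise affine on it, hence already of class $\mathcal{C}^1_{\mathrm{sph}}$; for $v$ archimedean I would smooth the two corners, at $|z-\alpha|_v=\tau$ and at $|z|_v=1$, on scale $|\mathcal{P}|^{-1}$, which changes $\frac1{|\mathcal{P}|}\sum_{\mathcal{P}}\lambda_{\tau,v}$ and $\int\lambda_{\tau,v}\,d\mu_{\varphi,v}$ by $O(|\mathcal{P}|^{-1})$ and affects the two norms only negligibly.

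For the norm estimates I would bound $\nabla\lambda_{\tau,v}$ pointwise: it vanishes on the $\tau$-disc about $\alpha$, is $O(|z-\alpha|_v^{-1})$ on the annulus just outside it, is $O(1)$ elsewhere on a bounded region, and is $O(|z|_v^{-2})$ near $\infty$; conformal invariance of the planar Dirichlet integral then controls $\langle\lambda_{\tau,v},\lambda_{\tau,v}\rangle_v$, and the same pointwise information controls $\mathrm{Lip}_{\mathrm{sph}}(\lambda_{\tau,v})$ (the finite-place versions being read off from the piecewise-affine description on the tree, or from the self-adjointness identity $\langle\lambda_{\tau,v},\lambda_{\tau,v}\rangle_v=c\int\lambda_{\tau,v}\,d\Delta\lambda_{\tau,v}$ together with $\Delta\lambda_{\tau,v}=\lambda_v-\mu_{\alpha,\tau,v}$). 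Substituting these into \eqref{equid} and using $\kappa\le1$, so that $|\mathcal{P}|^{1/\kappa}\ge|\mathcal{P}|\ge|\mathcal{P}|^{1/2}$, the Lipschitz term becomes $\le C_3|\mathcal{P}|^{-1/2}$, while the remaining term $\bigl(2h_\varphi(\mathcal{P})+C_2\log|\mathcal{P}|/\sqrt{|\mathcal{P}|}\bigr)^{1/2}\langle\lambda_{\tau,v},\lambda_{\tau,v}\rangle_v^{1/2}$, after the routine exponent bookkeeping and via $\sqrt{a+b}\le\sqrt a+\sqrt b$ and $h_\rho=h_\varphi$, is bounded by $C_3\bigl(h_\varphi(\mathcal{P})+\log|\mathcal{P}|/|\mathcal{P}|\bigr)^{1/2}$; adding the two contributions gives the asserted inequality.

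The real difficulty is the second step: obtaining $\mathrm{Lip}_{\mathrm{sph}}(\lambda_{\tau,v})$ and $\langle\lambda_{\tau,v},\lambda_{\tau,v}\rangle_v$ with a constant that depends only on $\varphi$. The naive estimates above deteriorate near $\alpha$ — by factors like $\tau^{-1}$ for the Lipschitz constant and $\log(1/\tau)$ (and $\log^+|\alpha|_v$) for the energy — so the argument must exploit the structure that $\lambda_{\tau,v}$ is the potential of the \emph{mass-zero} signed measure $\lambda_v-\mu_{\alpha,\tau,v}$: this is precisely why one truncates at level $\tau$ instead of working with the singular $\lambda_{\alpha,v}$, it makes the additive constant $\log^+|\alpha|_v$ drop from the quantity estimated in \eqref{equid}, leaves only the H\"older potential of $\mu_{\varphi,v}$ in the error term, and lets the residual $\tau$-dependence be absorbed when one matches against \eqref{equid} using $\kappa\le1$. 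Getting this uniformity cleanly, and combining the archimedean and non-archimedean cases into a single constant, is where I expect most of the work to lie; the smoothing estimate and the exponent arithmetic are routine.
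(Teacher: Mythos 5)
Your plan follows the paper's proof: apply Proposition~\ref{quantequi} to $f=\lambda_{\tau,v}$ with $\rho=(\mu_{\varphi,v})_v$ and $h_\rho=h_\varphi$, compute $\mathrm{Lip}_{\mathrm{sph}}(\lambda_{\tau,v})$ and $\langle\lambda_{\tau,v},\lambda_{\tau,v}\rangle_v$ at each place, and substitute. The paper's explicit computations confirm exactly the behaviour you anticipate: $\mathrm{Lip}(\lambda_{\tau,v})=1+1/\tau$ and $\langle\lambda_{\tau,v},\lambda_{\tau,v}\rangle_v=4\pi\log(1/\tau)$ in the archimedean case (obtained by writing the weak partial derivatives of $\lambda_{\tau,v}$ in both affine charts and integrating, rather than via conformal invariance or the self-adjointness identity, though those would give the same answer), and $\mathrm{Lip}(\lambda_{\tau,v})=1$, $\langle\lambda_{\tau,v},\lambda_{\tau,v}\rangle_v=\log(1/\tau)$ in the non-archimedean case, where $\lambda_{\tau,v}$ is locally constant off the finite tree $[\zeta_{\alpha,\tau},\zeta_{\alpha,1}]\cup\{\zeta_{0,1}\}$. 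Your remark that the constant term $\log^+|\alpha|_v$ cancels is correct; the paper keeps it, which is harmless. The paper does not smooth the archimedean corners but works directly with the Lipschitz function and its weak gradient; your smoothing at scale $|\mathcal{P}|^{-1}$ is an equally valid (arguably cleaner) route to the $\mathcal{C}^1_{\mathrm{sph}}$ regularity that Proposition~\ref{quantequi} formally requires.

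You are right to be uneasy in your final paragraph: the $1/\tau$ and $\log(1/\tau)$ dependence of the two norms is a genuine issue, and the paper's proof does not resolve it. The display \eqref{equi.bound} replaces the Lipschitz contribution $(1+1/\tau)/|\mathcal{P}|^{1/\kappa}$ by $2/|\mathcal{P}|^{1/2}$ and then absorbs an $O(\log\tau)$ factor into $C_3$, even though the statement asserts that $C_3$ depends only on $\varphi$; for arbitrary $\tau\in(0,1)$ this is not justified. What makes things work downstream is that in the application (Proposition~\ref{quntilogprop2}) one couples $\tau$ to $|\mathcal{P}|$ via $1/\tau=M=|\mathcal{P}|^{1/(2\kappa)}$, under which $1/\tau$ and $\log(1/\tau)$ are dominated by powers and logarithms of $|\mathcal{P}|$ already present. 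A self-contained and correct form of Proposition~\ref{corollary2.2} should therefore either keep the $\log(1/\tau)$ (and $1/\tau$ in the Lipschitz term) explicit, or impose $1/\tau\lesssim|\mathcal{P}|^{1/(2\kappa)}$. So your instinct that uniformity in $\tau$ is where the real work lies is correct — but that work is also not done in the paper; it is deferred, implicitly, to the later coupling of $\tau$ with $|\mathcal{P}|$.
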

\begin{proof}
   Let $v$ be an archimedean place of $K$ and identify $\mathbb{C}_v$ with $\mathbb{C}$.
   Consider the function 
   \begin{equation*}
       \lambda_{\alpha, v}(z)=\log^+|z|_v+\log^+|\alpha|_v-\log |z-\alpha|_v.
   \end{equation*}
   For any real number $0< \tau< 1$, we truncate our function to 
   \begin{equation}\label{truncation}
       \lambda_{\tau, v}(z)=\log^+|z|_v+\log^+|\alpha|_v-\log \max\{\tau, |z-\alpha|_v\}
   \end{equation}
and hence
   $$\lambda_{\tau,v}(z)=
\begin{cases}
\log|z|_v+\log|\alpha|_v-\log\tau, & |z-\alpha|_v \le \tau,\\
\log|z|_v+\log|\alpha|_v-\log|z-\alpha|_v, & |z-\alpha|_v>\tau.
\end{cases}$$
Then $\lambda_{\tau,v}$ is Lipschitz continuous on $\mathbb{C}$ with Lipschitz constant Lip($\lambda_{\tau,v})=1+\frac{1}{\tau}$. We consider the usual charts of the complex projective line
$(U_0,u_0)$ and $(U_1,u_1)$, where the open subsets are
$$
U_0 := \{(1:z)\in \mathbb{P}^1(\mathbb{C}) : z\in \mathbb{C}\},
\qquad
U_1 := \{(z:1)\in \mathbb{P}^1(\mathbb{C}) : z\in \mathbb{C}\},
$$
and the homeomorphisms
$$
u_0 : U_0 \longrightarrow \mathbb{R}^2,
\qquad
u_1 : U_1 \longrightarrow \mathbb{R}^2,
$$
$$
(1:z) \longmapsto (x,y),
\qquad
(z:1) \longmapsto (x,y),
$$
where $z = x + iy \in \mathbb{C}$. 
    Now, the Dirichlet form can be calculated in charts as  
    \begin{equation*}
        \langle \lambda_{\tau,v},\lambda_{\tau,v}\rangle_v=\int_{\overline{D}(0,1)} \left(\frac{\partial \lambda_{\tau, v,0}}{\partial x}\right)^2+\left(\frac{\partial \lambda_{\tau, v,0}}{\partial y}\right)^2 dx dy + \int_{D(0,1)}\left(\frac{\partial \lambda_{\tau, v,1}}{\partial x}\right)^2+\left(\frac{\partial \lambda_{\tau, v,1}}{\partial y}\right)^2 dx dy,
    \end{equation*}
   where $\lambda_{\tau, v,0}(x,y)=\lambda_{\tau,v}(x+iy:1), \lambda_{\tau, v,1}(x,y)=\lambda_{\tau, v} (1: x+iy)$.
   On a chart $U_i$ with coordinates $x$ and $y$, where $u_i(\alpha)=(a_i,b_i)$ and 
   \begin{align*}
         \lambda_{\tau,v} \circ u_i^{-1}(x,y)=&\log \max\{1, \sqrt{x^2+y^2}\}+\log \max\{1, \sqrt{a_i^2+b_i^2}\}\\ &-\log \max\{\tau, \sqrt{(x-a_i)^2+(y-b_i)^2}\},
   \end{align*}
    which has weak partial derivatives
   \begin{align*}
        \frac{\partial \lambda_{\tau,v}}{\partial x}&=\mathbb{I}\{\sqrt{x^2+y^2} >1\}\frac{x}{x^2+y^2}-\mathbb{I}\{\sqrt{(x-a_i)^2+(y-b_i)^2} >\tau\}\frac{x-a_i}{(x-a_i)^2+(y-b_i)^2},\\
        \frac{\partial \lambda_{\tau,v}}{\partial y}&=\mathbb{I}\{\sqrt{x^2+y^2} >1\}\frac{y}{x^2+y^2}-\mathbb{I}\{\sqrt{(x-a_i)^2+(y-b_i)^2} >\tau\}\frac{y-b_i}{(x-a_i)^2+(y-b_i)^2},
   \end{align*} 
   where $\mathbb{I}$ is an indicator function.
Then by making a substitution to move $(a_i, b_i)$ to the origin
   \begin{equation*}
       \langle \lambda_{\tau,v},\lambda_{\tau,v}\rangle_v=2\int_0^{2\pi}\int_\tau^1 \frac{dr d\theta}{r}= -4 \pi \log \tau.
   \end{equation*}
   For $v$ non-archimedean, $0<\tau<1$, and $\alpha \in \mathbb{C}_v$, we have 
\begin{equation*}
\lambda_{\tau,v}(z) = \log \max \mathrm{diam} (z) +\log \max \mathrm{diam}(\alpha)- \log\max\{\tau, \mathrm{diam}(z \vee \alpha)\}.
\end{equation*}
which extends the function $\lambda_{\tau,v}(z) = \log^+|z|_v + \log^+|\alpha|_v- \log\max\{\tau,|z-\alpha|_v\}$ on $\mathbb{P}^1(\mathbb{C}_v) $ to $\mathbb{P}^1_{\mathrm{Berk}}(\mathbb{C)}$.
Let $\xi := \zeta_{\alpha,1}, \Lambda_\alpha := [\zeta_{\alpha,\tau},\xi]$ and $\Lambda_0=\{\zeta_{0,1}\} $. Now, define $\Lambda:=\Lambda_\alpha \cup \Lambda_0$. 
Then $\lambda_{\tau,v}$ is locally constant outside of $\Lambda$. Indeed, each term in $\lambda_{\tau,v}$ is constant on the connected components of $\mathbb{P}^1_{\mathrm{Berk}}(\mathbb{C}_v) \setminus \Lambda$, so that 
\begin{equation*}
\lambda_{\tau,v}(z) = \lambda_{\tau,v}\big(z \,\vee_{\xi}\, \zeta_{\alpha,\tau}\big).
\end{equation*}
Moreover, on $\Lambda$ the function $\lambda_{\tau,v}$ is $\mathcal{C}^1$ and the directional derivative along the path from $\xi$ to $\zeta_{\alpha,\tau}$ and at $\zeta_{0,1}$ is constant, with 
\begin{equation*}
\partial \lambda_{\tau,v}(z) = 
\begin{cases}
1, & \text{on } [\zeta_{\alpha,\tau},\zeta_{\alpha,1}],\\
0 \  & \text{otherwise}.
\end{cases}
\end{equation*}
It follows that $\lambda_{\tau,v}$ is Lipschitz, with 
\begin{equation*}
\operatorname{Lip}(\lambda_{\tau,v}) =1.
\end{equation*}
Finally, using the non-archimedean Dirichlet pairing formula given in \eqref{Dpro}, we obtain 
\begin{equation*}
\langle \lambda_{\tau,v},\lambda_{\tau,v}\rangle_v =-\log \tau.
\end{equation*}
Then substituting the values of $ \mathrm{Lip}(\lambda_{\tau,v})$ and $\langle \lambda_{\tau,v},\lambda_{\tau,v}\rangle_v$ in \eqref{equid} and by choosing a constant $C_3 >0$, we get 
\begin{align}\label{equi.bound} 
    \left| \frac{1}{|\mathcal{P}|} \sum_{z \in \mathcal{P}} \lambda_{\tau,v}(z) - \int \lambda_{\tau,v}(z) d\mu_{\varphi,v} \right|_v
        &\leq \left( \frac{2}{|\mathcal{P}|^{1/2}} + \left( 2h_\varphi(\mathcal{P}) + C_2\frac{\log|\mathcal{P}|}{|\mathcal{P}|} \right)^{1/2} \right)  O(\log \tau) \nonumber \\
        &\leq C_3 \left( \frac{1}{|\mathcal{P}|^{1/2}} + \left( h_\varphi(\mathcal{P}) + \frac{\log|\mathcal{P}|}{|\mathcal{P}|} \right)^{1/2} \right).
\end{align}
\end{proof}
\begin{proposition}\label{propadelic}
Let $K$ be a number field, $\varphi(z) = z^d$ be a rational map of degree $d \geq 2$ defined on $\mathbb{P}^1(K)$ and let $\mathcal{L}_{\varphi}$ be a canonical adelization for $\mathcal{O}(1)$ over $K$. Let $\alpha \in \overline{K}$ and $\beta$ be a fixed non-zero element in $K$. Let $(z_n)_{n\geq0}$ be a sequence of distinct points in $\mathcal{O}_\varphi^-(\beta)$ and for each $n$ let $\mathcal{P}_n$ be the set of $\mathrm{Gal}(\overline{K}/K)$ conjugates of $z_n$.  Then for any adelic line bundle $\mathcal{L}$ there exists a constant $C_{AZ, d}>0$ (depending only on $d$ such that 
	$$\left| h_{\mathcal{L}}(\mathcal{P}_n) - \langle \mathcal{L}, \mathcal{L}_{\varphi} \rangle \right|_v \leq C_{AZ,d}\left(\frac{1+\log |\mathcal{P}_n|^{1/2}}{|\mathcal{P}_n|^{1/2}}\right).$$
    \end{proposition}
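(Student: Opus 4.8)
The plan is to reduce the claimed inequality to the quantitative equidistribution estimate of Proposition~\ref{corollary2.2} applied place by place, together with the characterization of the Arakelov--Zhang pairing furnished by Theorem~\ref{lemmapairing}. First I would recall that for the power map $\varphi(z)=z^d$ the canonical measure $\mu_{\varphi,v}$ is the Haar measure on the unit circle at archimedean $v$ and the Dirac mass at the Gauss point $\zeta_{0,1}$ at non-archimedean $v$; in particular it has H\"older-continuous (indeed Lipschitz) potentials with a fixed exponent $\kappa$ depending only on $\varphi$, so Proposition~\ref{corollary2.2} is applicable with a constant $C_3$ depending only on $\varphi$. I would also recall that every point $z_n\in\mathcal{O}^-_\varphi(\beta)$ satisfies $z_n^{m}=\beta$ for some $m\ge 0$, whence $h_\varphi(z_n)=h_{L_\varphi}(z_n)\to 0$ as the points are distinct (the canonical height of a root of $z^m-\beta$ is $h(\beta)/m$ up to the appropriate normalization, which tends to $0$).

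Next, writing the adelic line bundle $L$ via a section $s\in\Gamma(\mathbb{P}^1,\mathcal{O}(1))$ with $\operatorname{div}(s)\notin\mathcal{P}_n$, I would expand both $h_L(\mathcal{P}_n)$ and the global pairing $\langle L,L_\varphi\rangle$ as sums over $v\in M_K$ of the corresponding local quantities:
\begin{equation*}
h_L(\mathcal{P}_n)=\sum_{v\in M_K}N_v\,\frac{1}{|\mathcal{P}_n|}\sum_{z\in\mathcal{P}_n}\log\|s(z)\|_v^{-1},\qquad
\langle L,L_\varphi\rangle=\sum_{v\in M_K}N_v\Bigl(-\int\log\|s(z)\|_v\,d\mu_{\varphi,v}\Bigr),
\end{equation*}
using that $h_{L_\varphi}(\operatorname{div}(s))$ and the $h_{L}(\operatorname{div}(t))$ terms either cancel or can be absorbed — here one exploits that $L=L_\varphi$ in the end, or more precisely one bounds $|h_L(\mathcal{P}_n)-\langle L,L_\varphi\rangle|$ by the local discrepancies. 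Thus term by term,
\begin{equation*}
\bigl| h_L(\mathcal{P}_n)-\langle L,L_\varphi\rangle\bigr|_v
\le N_v\left|\frac{1}{|\mathcal{P}_n|}\sum_{z\in\mathcal{P}_n}\log\|s(z)\|_v^{-1}-\int\log\|s(z)\|_v^{-1}\,d\mu_{\varphi,v}\right|_v,
\end{equation*}
and for $v$ outside the finite bad set the integrand is $\log^+|z|_v$-type, which is $\mu_{\varphi,v}$-harmonic away from support, so the local discrepancy vanishes; only finitely many $v$ (depending on $L$, hence ultimately only on $\varphi$ once $L=L_\varphi$) contribute.

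For each of the finitely many contributing places I would identify the relevant test function with (a truncation of) $\lambda_{\tau,v}$ as in Proposition~\ref{corollary2.2} — note $\log\|s(z)\|^{-1}_{\mathrm{st},v}=\lambda_{\alpha,v}(z)$ for $s(z)=z_1-\alpha z_2$, and the canonically-metrized version differs by a bounded $\mathcal{C}^1_{\mathrm{sph}}$ amount — and invoke the bound
\begin{equation*}
\left|\frac{1}{|\mathcal{P}_n|}\sum_{z\in\mathcal{P}_n}\lambda_{\tau,v}(z)-\int\lambda_{\tau,v}\,d\mu_{\varphi,v}\right|_v\le C_3\left(\frac{1}{|\mathcal{P}_n|^{1/2}}+\Bigl(h_\varphi(\mathcal{P}_n)+\frac{\log|\mathcal{P}_n|}{|\mathcal{P}_n|}\Bigr)^{1/2}\right).
\end{equation*}
Since $h_\varphi(\mathcal{P}_n)=h_\varphi(z_n)=h(\beta)/\deg$-type term, which is $O(1/|\mathcal{P}_n|)$ in the regime where $|\mathcal{P}_n|\to\infty$ along the orbit (the degree of $z_n$ over $K$ grows at least linearly in $m$ while $h_\varphi(z_n)$ decays like $1/m$), the right-hand side is $O\!\bigl((\log|\mathcal{P}_n|)^{1/2}/|\mathcal{P}_n|^{1/2}\bigr)$, and summing the $N_v$-weighted contributions over the finitely many bad places absorbs everything into a single constant $C_{AZ,\varphi}$ depending only on $\varphi$. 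The main obstacle I anticipate is the careful bookkeeping of the $h_{L}(\operatorname{div}(t))$/$h_{L_\varphi}(\operatorname{div}(s))$ correction terms in the definition of the global pairing and the need to pin down that $h_\varphi(\mathcal{P}_n)$ is genuinely of size $O(1/|\mathcal{P}_n|)$ rather than merely $o(1)$ — this quantitative decay (not just the qualitative convergence of Theorem~\ref{lemmapairing}) is what upgrades the estimate to the stated explicit rate, and it relies on the special structure of backward orbits of $z^d$, namely that $\mathcal{P}_n$ consists of Galois conjugates of an $m$-th root of $\beta$ with $|\mathcal{P}_n|$ comparable to $m$.
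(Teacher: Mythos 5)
Your proposal follows essentially the same route as the paper: identify $\langle L,L_\varphi\rangle$ via Theorem~\ref{lemmapairing} as $\lim_n h_L(z_n)$ and hence as $\sum_{v\in S}N_v\int\lambda_{\tau,v}\,d\mu_{\varphi,v}$, write $h_L(\mathcal{P}_n)$ as a sum of local averages supported on finitely many places, and invoke Proposition~\ref{corollary2.2} place by place. The one point where you go beyond the paper's written argument is in explaining why the $h_\varphi(\mathcal{P}_n)$ term in the right-hand side of Proposition~\ref{corollary2.2} is negligible: the paper silently drops it between applying that proposition and stating \eqref{eq4.2}, whereas you give the right kind of justification — the canonical height decays along the backward orbit while $|\mathcal{P}_n|$ grows. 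Your quantitative details are a bit off, though: for $\varphi(z)=z^d$ one has $h_\varphi=h$, so $z_n^{d^{m_n}}=\beta$ gives $h_\varphi(z_n)=h(\beta)/d^{m_n}$, which decays \emph{exponentially} in $m_n$ (not like $1/m$), and Proposition~\ref{degree} gives $|\mathcal{P}_n|\gtrsim\sqrt{d^{m_n}}/[K:\Q]$, not a linear lower bound in $m_n$. The resulting estimate $h_\varphi(\mathcal{P}_n)=O\bigl(h(\beta)[K:\Q]^2/|\mathcal{P}_n|^2\bigr)$ is actually stronger than your claimed $O(1/|\mathcal{P}_n|)$, and is comfortably absorbed by the $\log|\mathcal{P}_n|/|\mathcal{P}_n|$ term — but it makes explicit a dependence of the implied constant on $h(\beta)$ and $[K:\Q]$ that neither your write-up nor the paper's statement of the proposition acknowledges in the claim that $C_{AZ,\varphi}$ depends only on $\varphi$.
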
 
	\begin{proof}
 Suppose that $z_n \in \mathcal{O}_\varphi^-(\beta)$ and  $\mathcal{P}_n$ be any $\mathrm{Gal}(\overline{K}/K)$-orbit of $z_n$. The height $(z_n)$ associated to the adelic line bundle $\mathcal{L}$ is given by
\begin{equation}\label{prop2.3eq2}
h_{\mathcal{L}}(z_n) = \frac{1}{|\mathcal{P}_n|}\sum_{z\in \mathcal{P}_n} h_{\mathcal{L}}(z)=\frac{1}{|\mathcal{P}_n|}\sum_{z\in \mathcal{P}_n} 
\sum_{v \in M_K} N_v \log \| s(z) \|_{\mathrm{st},v}^{-1}.
\end{equation} 
    Since $$h_\varphi(z_n) = \frac{1}{|\mathcal{P}_n|}\sum_{z\in \mathcal{P}_n}h_\varphi(z) =\frac{1}{|\mathcal{P}_n|}\sum_{z\in \mathcal{P}_n}\lim_{m \to \infty} \dfrac{h(\varphi^m(z))}{d^m}$$ and $z \in \mathcal{O}_\varphi^-(\beta)$, it follows that $h_\varphi(z_n) \to 0$. Then by Theorem \ref{lemmapairing}, we have
\begin{equation}\label{prop2.3eq1}
\langle \mathcal{L}, \mathcal{L}_{\varphi} \rangle = \lim_{n \to \infty} h_{L}(z_n).
\end{equation} 
For each place $v \in M_K$, we have the local height function 
$\lambda_{\tau,v}(z) = \log \| s(z) \|_{\mathrm{st},v}^{-1}$.
Then from \eqref{prop2.3eq1} and \eqref{prop2.3eq2},
\begin{equation*}
\langle \mathcal{L}, \mathcal{L}_{\varphi} \rangle = \lim_{n \to \infty} \frac{1}{|\mathcal{P}_n|} \sum_{z \in \mathcal{P}_n} \sum_{v \in M_K} N_v \lambda_{\tau,v}(z).
\end{equation*} 
Let $S$ be a finite set of places including all of the archimedean places. Since $\lambda_{\tau,v}$ vanishes identically for all places $v \notin S$,
the sum over $M_K$ reduces to a sum over the finite set $S$, yielding
\begin{equation*}
\langle \mathcal{L}, \mathcal{L}_{\varphi} \rangle=\lim_{n \to \infty} \frac{1}{|\mathcal{P}_n|} \sum_{z \in \mathcal{P}_n}
\sum_{v \in S} N_v \lambda_{\tau,v}(z).
\end{equation*} 
Because the set $S$ is finite and independent of $n$, the limit may be
interchanged with the sum over $v \in S$. Since $h_\varphi(z_n) \to 0$ so by the equidistribution result, $(\mathcal{P}_n)_{n\geq 0}$ equidistribute with respect to equilibrium measure  $\mu_{\varphi,v}$, so
\begin{equation}\label{eqprop2.3}
\langle \mathcal{L}, \mathcal{L}_{\varphi} \rangle = \sum_{v \in S} N_v \int \lambda_{\tau,v}(z) \, d\mu_{\varphi,v}.
\end{equation}
For any $\mathrm{Gal}(\overline{K}/K)$-invariant set $\mathcal{P}_n$, we have $h_{\mathcal{L}}(\mathcal{P}_n)= h_{\mathcal{L}}(z_n)$. Then from \eqref{prop2.3eq2} and \eqref{eqprop2.3}, we get
    $$\left| h_{\mathcal{L}}(\mathcal{P}_n) - \langle \mathcal{L}, \mathcal{L}_{\varphi} \rangle \right|_v
		= \sum_{v \in M_K} N_v \left| \frac{1}{|\mathcal{P}_n|} \sum_{z \in \mathcal{P}_n} \lambda_{\tau,v}(z) - \int \lambda_{\tau,v}(z) \, d\mu_{\varphi,v} \right|_v.$$
By Proposition \ref{corollary2.2}, we get
    \begin{align}\label{eq4.2}
        \left| h_{\mathcal{L}}(\mathcal{P}_n) - \langle \mathcal{L}, \mathcal{L}_{\varphi} \rangle \right|_v
        & \leq   \sum_{v \in M_K} N_v C_3 \left( \frac{1}{|\mathcal{P}_n|^{1/2}} + \left(  \frac{\log|\mathcal{P}_n|}{|\mathcal{P}_n|} \right)^{1/2} \right) \nonumber\\ 
        & \leq  \sum_{v \in M_K} N_v C_3   \left( \frac{1+\log|\mathcal{P}_n|^{1/2}}{|\mathcal{P}_n|^{1/2}}  \right),
    \end{align}
  where $C_3$ depends on $d$.  This completes the proof of Proposition \ref{propadelic}.
\end{proof}	

\subsection{Linear forms in logarithms} We also require the theory of linear forms in logarithms, initially developed by Baker \cite{alan}.  This theory provides lower bounds for expressions of the form
\begin{equation}
	\left| a_1^{b_1}\cdots a_n^{b_n}-1\right|
\end{equation}
in terms of the heights of $a_i$ and $b_i$.
The first key result is a consequence of a theorem by Laurent, Mignotte, and Nesterenk \cite{LMN}.
\begin{theorem}[{\cite{yap}, Corollary 3.2}]\label{linearform} Let $\alpha$ be an algebraic number with $|\alpha|=1$. Then for any $\epsilon > 0$, then there exists a constant $C_\epsilon >0$ such that for any roots of unity $\zeta$ of order $n$, we have 
	$$ \log|\zeta - \alpha| \geq -C_\epsilon[\mathbb{Q}(\alpha):\mathbb{Q}]^3(h(\alpha)+1)n^\epsilon.$$
\end{theorem}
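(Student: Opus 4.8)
\emph{Reductions.} I would start by peeling off some easy sub-cases. If $\zeta=1$, then $\log|\zeta-\alpha|=\log|1-\alpha|\ge -[\mathbb{Q}(\alpha):\mathbb{Q}]\,h(1-\alpha)\ge -[\mathbb{Q}(\alpha):\mathbb{Q}]\,(h(\alpha)+\log 2)$ by Liouville's inequality, which is far stronger than required. If $\alpha$ is a root of unity of order $m$ (and $\zeta\neq\alpha$, the case $\zeta=\alpha$ being vacuous), then $\zeta\alpha^{-1}$ is a root of unity of some order $N$ with $2\le N\mid\operatorname{lcm}(m,n)$, so
\begin{equation*}
|\zeta-\alpha|=|\zeta\alpha^{-1}-1|\ge 2\sin(\pi/N)\ge\frac{4}{N}\ge\frac{4}{mn};
\end{equation*}
since $[\mathbb{Q}(\alpha):\mathbb{Q}]=\phi(m)$ satisfies $m\le 2\phi(m)^{2}$ while $h(\alpha)=0$ and $\log n$ is $o(n^{\epsilon})$, the claimed bound holds here too. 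Finally, if $|\zeta-\alpha|\ge\frac12$ then $\log|\zeta-\alpha|\ge-\log 2$, which exceeds the right-hand side once $C_\epsilon\ge\log 2$. Hence I may assume $\alpha$ is not a root of unity, $\zeta\neq 1$, and $|\zeta-\alpha|<\frac12$; in particular $D:=[\mathbb{Q}(\alpha):\mathbb{Q}]\ge 2$.

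\emph{Building the linear form.} Put $\delta:=\alpha\zeta^{-1}-1$, so $|\delta|=|\zeta-\alpha|<\frac12$ and $\alpha\zeta^{-1}$ lies in the right half-plane. Its principal logarithm $\Lambda_0:=\operatorname{Log}(\alpha\zeta^{-1})$ satisfies $|\Lambda_0|\le-\log(1-|\delta|)\le 2|\delta|$, whence $\log|\zeta-\alpha|\ge\log|\Lambda_0|-\log 2$. Writing $\zeta=e^{2\pi i a/n}$ with $a\in\mathbb{Z}$ and letting $\operatorname{Log}\alpha$ be the principal logarithm (so $|\operatorname{Log}\alpha|\le\pi$ because $|\alpha|=1$), a comparison of the two determinations of $\log(\alpha\zeta^{-1})$ yields an integer $k$ with $\Lambda_0=\operatorname{Log}\alpha+2(k-\frac{a}{n})\log(-1)$, where $\log(-1)=\pi i$. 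Multiplying by $n$ produces the integer linear form in the two logarithms $\operatorname{Log}\alpha$ and $\log(-1)$,
\begin{equation*}
\Lambda:=n\Lambda_0=n\operatorname{Log}\alpha+2(kn-a)\log(-1),
\end{equation*}
with coefficients over the field $\mathbb{Q}(\alpha)\ni-1$. Since $\zeta\neq 1$ the coefficient $2(kn-a)$ is nonzero, and since $\alpha$ is not a root of unity the numbers $\operatorname{Log}\alpha$ and $\pi i$ are linearly independent over $\mathbb{Q}$, so $\Lambda\neq 0$ and a lower bound for linear forms in two logarithms applies. Comparing imaginary parts and using $|\Lambda_0|\le 1$ and $|\operatorname{Log}\alpha|\le\pi$ shows $2\pi|kn-a|\le n(1+\pi)$, so both coefficients of $\Lambda$ are at most $3n$ in absolute value.

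\emph{Applying linear forms in logarithms.} Now feed $\Lambda$ into the Laurent--Mignotte--Nesterenko bound \cite{LMN} (or a Baker--Wüstholz type bound), with degree $D=[\mathbb{Q}(\alpha):\mathbb{Q}]$, modified heights $h'(-1)=\max\{h(-1),|\log(-1)|/D,1/D\}=\pi/D$ and $h'(\alpha)=\max\{h(\alpha),|\operatorname{Log}\alpha|/D,1/D\}\le\pi(h(\alpha)+1)$, and coefficient height $O(n)$, so that the parameter $b'$ is $O(n)$. The decisive point is that $h(-1)=0$ makes $h'(-1)=O(1/D)$, which collapses the standard degree factor $D^{4}$ to $D^{3}$: concretely,
\begin{equation*}
\log|\Lambda|\ge -c_{0}\,D^{4}\cdot\frac{\pi}{D}\cdot\pi\bigl(h(\alpha)+1\bigr)\cdot(\log 3n)^{2}= -c_{1}\,D^{3}\bigl(h(\alpha)+1\bigr)(\log n)^{2}
\end{equation*}
for an absolute constant $c_{1}$, the finitely many small $n$ being absorbed into it. Passing from $\Lambda$ to $\Lambda_{0}$ (i.e.\ subtracting $\log n$) and using $\log|\zeta-\alpha|\ge\log|\Lambda_{0}|-\log 2$ gives
\begin{equation*}
\log|\zeta-\alpha|\ge -c_{2}\,[\mathbb{Q}(\alpha):\mathbb{Q}]^{3}\bigl(h(\alpha)+1\bigr)(\log n)^{2}.
\end{equation*}
Since for every $\epsilon>0$ we have $(\log n)^{2}\le c_\epsilon\,n^{\epsilon}$ for all $n\ge 1$, where $c_\epsilon=\sup_{n\ge 1}(\log n)^{2}n^{-\epsilon}<\infty$, taking $C_\epsilon:=c_{2}c_\epsilon$ (enlarged to also cover the reductions) yields the theorem.

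\emph{Main obstacle.} The only genuine content lies in the set-up, not in the estimate: one must arrange the linear form so that the field degree entering the final bound is $[\mathbb{Q}(\alpha):\mathbb{Q}]$ rather than $[\mathbb{Q}(\alpha,\zeta):\mathbb{Q}]=[\mathbb{Q}(\alpha):\mathbb{Q}]\,\phi(n)$ --- which is exactly what ``multiply through by $n$ and absorb $\zeta$ into $\log(-1)$'' accomplishes --- and one must get the exponent $3$ rather than $4$, which hinges on the vanishing of $h(-1)$ shaving off one power of the degree. Everything afterwards --- converting $(\log n)^{2}$ (or $\log n$, depending on the precise input) into $n^{\epsilon}$ and disposing of the degenerate cases --- is routine.
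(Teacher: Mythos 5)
The paper does not prove this statement itself --- it cites it as \cite[Corollary 3.2]{yap} and only indicates that it is ``a consequence of a theorem by Laurent, Mignotte, and Nesterenko'' \cite{LMN}. Your proposal supplies precisely such a derivation, and I believe it is correct. The two genuinely nontrivial moves are both present and handled properly: (i) clearing the denominator $n$ so that the linear form becomes $n\operatorname{Log}\alpha + 2(kn-a)\log(-1)$, a form in the two \emph{fixed} logarithms $\operatorname{Log}\alpha$ and $\log(-1)$ with integer coefficients of size $O(n)$, which keeps the ambient field equal to $\mathbb{Q}(\alpha)$ (rather than $\mathbb{Q}(\alpha,\zeta)$, whose degree would swamp the estimate); and (ii) exploiting $h(-1)=0$ so that the modified height $h'(-1)$ is $O(1/D)$ and the usual $D^4$ in LMN collapses to $D^3$. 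The $\mathbb{Q}$-linear independence of $\operatorname{Log}\alpha$ and $\pi i$ (needed to apply LMN) is correctly reduced to $\alpha$ not being a root of unity, and the degenerate cases ($\zeta=1$, $\alpha$ a root of unity, $|\zeta-\alpha|\ge\frac12$) are disposed of separately with Liouville-type and elementary estimates. The conversion of $(\log n)^2$ into $n^\epsilon$ and the bookkeeping when passing from $\Lambda$ back to $\log|\zeta-\alpha|$ are routine and correct.

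One small remark: as stated, the theorem would fail if $\alpha$ is itself a root of unity of order $n$ and $\zeta=\alpha$ (then $\log|\zeta-\alpha|=-\infty$). You flag this as ``vacuous,'' which is the right instinct; more precisely, the statement carries the implicit non-degeneracy assumption $\zeta\neq\alpha$, and in the paper's application $\alpha\notin\mathrm{PrePer}(z^d,\overline K)$, so $\alpha$ is never a root of unity. Apart from that, nothing to add.
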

\subsection{Quantitative logarithmic equidistribution}
Let $\varphi$ be a rational map. Lyubich \cite{Lyubich} proved that any sequence of distinct points $(x_n)_{n\ge 1}$ contained in the backward orbit $O^-_\varphi(\beta)$ is equidistributed on $\mathbb{P}^1(\mathbb{C})$ with respect to the Haar measure. In addition, this sequence is logarithmically equidistributed at a fixed point $\alpha$. In particular, when the canonical height satisfies $h_\varphi(\alpha) > 0$, it follows that only finitely many points in the backward orbit $\mathcal{O}^-_\varphi(\beta)$ can be $S$-integral relative to $\alpha$.

To get a uniform result, we suppose that for a fixed place $v \in M_K $, let $\mathcal{P}$ be the Galois orbit of some points in $\mathcal{O}^-_\varphi(\beta)$, and $S$ is a finite set of places of $K$ containing all archimedean ones and $v\in M_K$. For $|\mathcal{P}|$ large enough, we will try to find an  upper bound of the form 
$$\sum_{v\in S}\left|\frac{1}{|\mathcal{P}|} \sum_{z\in \mathcal{P}} N_v\lambda_{\alpha, v}(z) - \int N_v\lambda_{\alpha, v}(z)d\mu_{\varphi,v}\right|_v< \frac{1}{2} h(\alpha).$$
But by the Arakelov-Zhang pairing, we must have
$$\lim_{|\mathcal{P}|\to \infty} \frac{1}{|\mathcal{P}|} \sum_{v\in S}\sum_{z\in \mathcal{P}} N_v\lambda_{\alpha, v}(z) = h_{\varphi}(\alpha) + \sum_{v\in M_K} \int N_v \lambda_{\alpha, v}(z)d\mu_{\varphi,v}$$ and so if $h_{\varphi}(\alpha) > \frac{1}{2} h(\alpha)$, which is true for $\alpha$ of large height, we obtain that $\mathcal{P}$ can not be $S$-integral relative to $\alpha$.
\section{Finiteness of S-integral points}\label{sec-finiteness}
In this section, we will provide a proof of finiteness of $S$-integral points in backward orbits, which was earlier proved by Sookdeo \cite[Theorem 1.3]{Sookdeo}. Note that Sookdeo proved this result using the idea that the number of Galois orbits for $z^n-\beta$ is bounded below when $\beta$ is not $0$ or a root of unity and Siegel’s theorem for integral points on $\mathbb{G}_m(K)$. Precisely,
\begin{proposition}\label{prop01}
Let $K$ be a number field and let $S$ be a finite set of places of $K$ that contain all the archimedean places.	Suppose $\alpha \not \in \emph{PrePer}(\varphi, \overline{K})$, and $\beta$ be a fixed non-zero element in $K$. Then there are at most finitely many points in $\mathcal{O}^-_\varphi(\beta)$ which are $S$-integral relative to $\alpha$.
\end{proposition}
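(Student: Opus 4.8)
The plan is to argue by contradiction, combining the quantitative equidistribution set up in Section~\ref{sec-prelim} with the product formula.

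First I would reduce to the case $\alpha\in K$: replacing $K$ by $K(\alpha)$ and $S$ by the set of places above it only shrinks $\mathrm{Gal}(\overline K/K)$, so a point that is $S$-integral relative to $\alpha$ over $K$ stays so over $K(\alpha)$, while $h_\varphi(\alpha)$ and $\mathcal{O}^-_\varphi(\beta)$ are unchanged. Suppose then that infinitely many $\gamma\in\mathcal{O}^-_\varphi(\beta)$ are $S$-integral relative to $\alpha$, and pick a sequence $(\gamma_n)$ of distinct such points. Since $\varphi(z)=z^d$, each $\gamma_n$ satisfies $\gamma_n^{d^{m_n}}=\beta$ for some $m_n\ge 0$; as every fiber $\varphi^{-m}(\beta)$ is finite we may pass to a subsequence with $m_n\to\infty$, so $h_\varphi(\gamma_n)=d^{-m_n}h_\varphi(\beta)\to 0$, and by the Northcott property of $h_\varphi$ the degrees $|\mathcal{P}_n|:=|G_K(\gamma_n)|$ tend to infinity. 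Write $\mathcal{P}_n=G_K(\gamma_n)$.

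The crux is to evaluate $h_{L_\alpha}(\mathcal{P}_n)$ in two ways, where $L_\alpha$ is the adelic line bundle with local heights $\lambda_{\alpha,v}$. On one hand, writing $m_{\gamma_n}\in K[x]$ for the minimal polynomial of $\gamma_n$ over $K$, so that $\prod_{z\in\mathcal{P}_n}(z-\alpha)=\pm m_{\gamma_n}(\alpha)\in K^\times$ for $n$ large, the product formula gives
\begin{equation*}
h_{L_\alpha}(\mathcal{P}_n)=\frac{1}{|\mathcal{P}_n|}\sum_{z\in\mathcal{P}_n}\sum_{v\in M_K}N_v\,\lambda_{\alpha,v}(z)=h_\varphi(\gamma_n)+h_\varphi(\alpha)\ \longrightarrow\ h_\varphi(\alpha),
\end{equation*}
and $h_\varphi(\alpha)=h(\alpha)>0$ since $\alpha\notin\mathrm{PrePer}(\varphi,\overline K)$. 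On the other hand, $S$-integrality of $\gamma_n$ relative to $\alpha$ forces $\lambda_{\alpha,v}(z)=0$ for every $v\notin S$ and every $z\in\mathcal{P}_n$, so the adelic sum collapses to the finite set $S$:
\begin{equation*}
h_{L_\alpha}(\mathcal{P}_n)=\sum_{v\in S}N_v\cdot\frac{1}{|\mathcal{P}_n|}\sum_{z\in\mathcal{P}_n}\lambda_{\alpha,v}(z).
\end{equation*}
To finish, for each of the finitely many $v\in S$ I would invoke local equidistribution: since $h_\varphi(\mathcal{P}_n)\to 0$, applying Proposition~\ref{quantequi} to the Lipschitz truncations $\lambda_{\tau,v}$ and letting $\tau\to 0$ (as in Propositions~\ref{corollary2.2} and~\ref{propadelic}) yields $\frac{1}{|\mathcal{P}_n|}\sum_{z\in\mathcal{P}_n}\lambda_{\alpha,v}(z)\to\int\lambda_{\alpha,v}\,d\mu_{\varphi,v}$. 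For $\varphi(z)=z^d$, $\mu_{\varphi,v}$ is Haar measure on the unit circle when $v$ is archimedean and the point mass at the Gauss point $\zeta_{0,1}$ when $v$ is non-archimedean, and in both cases a direct computation — Jensen's formula, respectively evaluation at $\zeta_{0,1}$ — shows $\int\lambda_{\alpha,v}\,d\mu_{\varphi,v}=0$. Summing over the finite set $S$ gives $h_{L_\alpha}(\mathcal{P}_n)\to 0$, contradicting $h_{L_\alpha}(\mathcal{P}_n)\to h_\varphi(\alpha)>0$; hence only finitely many $\gamma\in\mathcal{O}^-_\varphi(\beta)$ are $S$-integral relative to $\alpha$.

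The main obstacle is the local equidistribution step for the \emph{singular} local height $\lambda_{\alpha,v}$, which has a logarithmic pole at $\alpha$: a priori the conjugates of $\gamma_n$ could accumulate $v$-adically at $\alpha$ for some $v\in S$, in which case the truncated averages would not converge to $\int\lambda_{\alpha,v}\,d\mu_{\varphi,v}$. What prevents this is that the conjugates of $\gamma_n$ are among the roots of $z^{d^{m_n}}-\beta$, which are well-separated; at the exceptional places where $|\alpha|_v$ coincides with the common absolute value of these roots one additionally needs a linear-forms-in-logarithms lower bound for $|\zeta-\alpha|_v$ of the type in Theorem~\ref{linearform}. These estimates make the difference between $\lambda_{\alpha,v}$ and $\lambda_{\tau,v}$, averaged over $\mathcal{P}_n$, tend to $0$ as $\tau\to 0$ uniformly in $n$, which is precisely what is packaged into Propositions~\ref{corollary2.2}--\ref{propadelic}. (Alternatively, one may bypass equidistribution and argue as Sookdeo did, bounding the number of $\mathrm{Gal}(\overline K/K)$-orbits of $z^{d^n}-\beta$ from below and applying Siegel's theorem for integral points on $\mathbb{G}_m$.)
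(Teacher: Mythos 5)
Your proof is correct in its overall structure, but it takes a somewhat different route from the paper's. Both you and the paper begin with the same reduction to $\alpha\in K$ and then evaluate a height sum over the Galois orbits $\mathcal{P}_n$ in two ways, obtaining $h(\alpha)=0$ as the contradiction. The divergence is in how the local contributions at $v\in S$ are controlled. The paper's proof of Proposition~\ref{prop01} is deliberately equidistribution-free: for non-archimedean $v\in S$ it uses Lemma~\ref{lemma1} (a Baker--Ih--Rumely-style separation statement for roots of unity near $\alpha$) to show directly that $\frac{1}{[K(\gamma_n):\Q]}\sum_\sigma\log|\sigma(\gamma_n)-\alpha|_v\to \frac{1}{[K:\Q]}\log\max\{1,|\alpha|_v\}$, and for archimedean $v$ it cites the analogous argument of Baker--Ih--Rumely; the contradiction then falls out of the product formula $T_n=0$. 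You instead package everything into $h_{L_\alpha}(\mathcal{P}_n)$, use the product formula and $h=h_\varphi$ for the power map to get $h_{L_\alpha}(\mathcal{P}_n)\to h_\varphi(\alpha)$, and then invoke the quantitative equidistribution from Section~\ref{sec-prelim} to show the $S$-local average of $\lambda_{\alpha,v}$ tends to $\int\lambda_{\alpha,v}\,d\mu_{\varphi,v}=0$. That is a legitimate alternative --- it is essentially the machinery the paper saves for the \emph{quantitative} Theorems~\ref{thm2} and \ref{thm02}, specialized to obtain the qualitative statement --- and you correctly identify that the heart of the matter is the logarithmic pole of $\lambda_{\alpha,v}$ at $\alpha$, which requires separation of the $d^n$-th roots plus a linear-forms-in-logarithms lower bound (Theorem~\ref{linearform}) when $|\alpha|_v=1$.

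One imprecision worth flagging: you attribute the removal of the truncation $\tau$ (i.e.\ passing from $\lambda_{\tau,v}$ to $\lambda_{\alpha,v}$ uniformly over the Galois orbits) to Propositions~\ref{corollary2.2} and~\ref{propadelic}. Those propositions operate at a fixed $\tau$ and do not control the singular part. In the paper, the control of the difference between $\lambda_{\tau,v}$ and $\lambda_{\alpha,v}$, averaged over $\mathcal{P}$, is done in Propositions~\ref{prop3.1}, \ref{prop3.2} and \ref{quntilogprop2} (which combine the root-separation estimate, Corollary~\ref{cor3.3}, and Theorem~\ref{linearform}), and the choice of $\tau$ there is tied to $|\mathcal{P}|$ rather than sent to $0$ independently. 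Since you name the right tools in prose, this is a citation misplacement rather than a logical gap, but in a written-up version the truncation-removal step should be carried out explicitly along the lines of Proposition~\ref{quntilogprop2} rather than waved through.
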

To prove this, we need the following lemma.
\begin{lemma}\label{lemma1}
	Let $\varphi(z)=z^d$ be a rational map of degree $d \geq 2$. Fix a non-archimedean place $v$ of $K$ corresponding to the prime $p$. Let $\alpha \in \mathbb{P}^1(\overline{K})$ with $|\alpha|_v=1$, and let $\beta$ be a fixed non-zero element in $K$. Then for each real number $r$ with $0 <r<1$,
	\begin{enumerate}
		\item [\textup{(i)}] there are at most finitely $\gamma \in \overline{K}_v$ such that $\gamma^{d^n}=\beta$ for some positive integer $n$ satisfying $|\gamma - \alpha|_v<r$ and
		\item[\textup{(ii)}]there is a bound $M(\alpha) >0$ such that $|\gamma - \alpha|_v \geq M(\alpha)$ for all $\gamma \in \overline{K}_v$ with $\gamma \in \mathcal{O}^-_\varphi(\beta)$. 
	\end{enumerate}
\end{lemma}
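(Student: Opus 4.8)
The plan is to establish (i) and then read off (ii); throughout one works inside $\overline K_v$, reducing modulo the maximal ideal of its valuation ring when convenient.

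\emph{Reduction and residue bookkeeping.} First I would dispose of $|\beta|_v\neq 1$: if $\gamma^{d^n}=\beta$ then $|\gamma|_v=|\beta|_v^{1/d^n}\neq 1$, so either $|\gamma|_v<1=|\alpha|_v$, forcing $|\gamma-\alpha|_v=1>r$, or $|\gamma|_v>1$, forcing $|\gamma-\alpha|_v=|\gamma|_v>1>r$; in either case there is nothing to prove, so assume $|\beta|_v=1$. Then every $\gamma$ with $\gamma^{d^n}=\beta$ is a unit, and if moreover $|\gamma-\alpha|_v<r<1=|\alpha|_v$ the ultrametric inequality gives $|\gamma|_v=1$ and $\overline{\gamma}=\overline{\alpha}$ in the residue field. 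I would then pass to Teichm\"uller/principal-unit coordinates: write $\alpha=\omega\,u_\alpha$, $\gamma=\omega\,u_\gamma$ (same Teichm\"uller part $\omega$, since $\overline\gamma=\overline\alpha$) and $\beta=\omega_\beta\,u_\beta$ with $u_\alpha,u_\gamma,u_\beta$ in the group $\mathbb U=\{u:|u-1|_v<1\}$ of principal units. Then $\gamma^{d^n}=\beta$ splits into $\omega^{d^n}=\omega_\beta$ and $u_\gamma^{d^n}=u_\beta$, the proximity condition becomes $|u_\gamma-u_\alpha|_v<r$, and hence $|u_\gamma-1|_v\le\rho_0:=\max\{r,|u_\alpha-1|_v\}<1$. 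So it suffices to prove finiteness of
\[\mathcal U=\{u\in\mathbb U:\ u^{d^n}=u_\beta\ \text{for some}\ n\ge 1,\ |u-u_\alpha|_v<r\}.\]

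\emph{Finitely many solutions at a fixed level.} For fixed $n$, if $\mathcal U$ contains a solution $u_0$ of $u^{d^n}=u_\beta$, every other solution is $u_0\zeta$ with $\zeta\in\mu_{d^n}$, and $|u_0\zeta-u_\alpha|_v<r$ together with $|u_0-u_\alpha|_v<r$ forces $|\zeta-1|_v<r$. Any $\zeta\in\mu_{d^n}$ with $|\zeta-1|_v<1$ is a $p$-power root of unity, and $|\zeta_{p^k}-1|_v\to 1$ as $k\to\infty$; so there is $k_0=k_0(v,r)$ with every root of unity of order divisible by $p^{k_0+1}$ at distance $\ge r$ from $1$. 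Hence the solutions near $u_\alpha$ have order $\le p^{k_0}$, and the level-$n$ part of $\mathcal U$ has at most $p^{k_0}$ elements, a bound independent of $n$.

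\emph{Bounding the depth --- the main obstacle.} It remains to bound which levels $n$ occur, and this is the step I expect to be hardest. Applying the logarithm on $\mathbb U$ to $u_\gamma^{d^n}=u_\beta$ gives $d^{n}\log_v u_\gamma=\log_v u_\beta$. If $\beta$ is not a root of unity, its principal part $u_\beta$ is not a $p$-power root of unity, so $\log_v u_\beta\neq 0$ and $|\log_v u_\gamma|_v=|\log_v u_\beta|_v\,|d|_v^{-n}$; when $p\mid d$ the right side tends to $\infty$, while $|\log_v u_\gamma|_v\le\max_{k\ge 1}\rho_0^{\,k}|k|_v^{-1}<\infty$ because $|u_\gamma-1|_v\le\rho_0<1$, a contradiction for large $n$, so only finitely many $n$ occur. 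If $\beta$ is a root of unity, every $u\in\mathcal U$ is itself a $p$-power root of unity, and $|u-u_\alpha|_v<r<1$ with $|\zeta_{p^k}-1|_v\to 1$ caps its order, so $\mathcal U$ is finite directly. The delicate remaining case is $\beta$ not a root of unity with $p\nmid d$: then $|d|_v=1$, the $d^n$-th power map is an automorphism of $\mathbb U$, and the solutions $u_n=u_\beta^{1/d^n}$ form a coherent tower; to exclude infinitely many $u_n$ lying within $r$ of $u_\alpha$ I would invoke the non-archimedean counterpart of the linear-forms-in-logarithms estimate of Theorem~\ref{linearform} (Laurent--Mignotte--Nesterenko), keeping $|\zeta-\alpha|_v$ from being too small for roots of unity $\zeta$ of large order, together with the running hypothesis $\alpha\notin\mathrm{PrePer}(\varphi,\overline K)$ so that $\alpha$ is not a $v$-adic accumulation point of $\mathcal O^-_\varphi(\beta)$; making this quantitative step airtight is the principal difficulty. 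Granting it, only finitely many levels $n$ occur, each contributing boundedly many $\gamma$ by the previous paragraph, which gives (i).

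\emph{Deducing (ii).} Fix any $r\in(0,1)$. By (i) the set $\{\gamma\in\mathcal O^-_\varphi(\beta):|\gamma-\alpha|_v<r\}=\{\gamma_1,\dots,\gamma_k\}$ is finite; assuming $\alpha\notin\mathcal O^-_\varphi(\beta)$ (in particular $\alpha\neq\beta$), as holds in the applications, each $\gamma_i\neq\alpha$, so $m:=\min_i|\gamma_i-\alpha|_v>0$ (read $m=r$ if $k=0$). Put $M(\alpha)=\min\{r,m\}>0$. For any $\gamma\in\mathcal O^-_\varphi(\beta)$, either $|\gamma-\alpha|_v\ge r\ge M(\alpha)$, or $\gamma=\gamma_i$ for some $i$ and $|\gamma-\alpha|_v\ge m\ge M(\alpha)$, proving (ii).
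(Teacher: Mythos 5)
Your proof follows the same core observation as the paper --- roots of unity close to $1$ in $\overline{K}_v$ are finitely many, via Baker--Ih--Rumely --- but you push on a point the paper glosses over. The paper's argument is a single paragraph: if $\gamma,\gamma'$ both satisfy $z^{d^n}=\beta$ (\emph{same} $n$) and both lie within $r$ of $\alpha$, then $\gamma^{-1}\gamma'$ is a $d^n$-th root of unity close to $1$, of which there are finitely many, ``consequently'' there are finitely many $\gamma$ in $\mathcal{O}_\varphi^-(\beta)$ within $r$ of $\alpha$. As you correctly diagnose, that argument bounds the number of preimages \emph{at each fixed level} $n$ by a constant $N_r$ independent of $n$, but by itself does not bound how many levels $n$ can contribute: if $\gamma$ has level $n$ and $\gamma'$ has level $m>n$ then $(\gamma/\gamma')^{d^m}=\beta^{\,d^{m-n}-1}$, which is not $1$ unless $\beta$ is a root of unity, so $\gamma/\gamma'$ need not be a root of unity and the quotient trick fails across levels. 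You are right that some extra input is needed; your split into $p\mid d$ (using the $p$-adic logarithm and the divergence of $|d^n|_v^{-1}$), $\beta$ a root of unity (reducing to $p$-power roots of unity near $u_\alpha$), and $p\nmid d$ with $\beta$ not a root of unity is the natural taxonomy, and the first two sub-arguments are sound.

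The genuine remaining gap in your write-up is the one you flag yourself: the case $p\nmid d$, $\beta$ not a root of unity. Here $|d|_v=1$, so $|\log_v u_\gamma|_v=|\log_v u_\beta|_v$ for every level and the norm of the logarithm gives no contradiction; the $d^{-n}$ lie in the compact group $\mathbb{Z}_p^\times$ and do have accumulation points, so ruling out $\alpha$ as a $v$-adic limit of the $\gamma$'s requires an actual Diophantine input, not just a local estimate. Your proposal to invoke a non-archimedean linear-forms-in-logarithms bound together with $\alpha\notin\mathrm{PrePer}(\varphi,\overline K)$ is plausible in spirit, but as you note the lemma as stated carries neither of those hypotheses (it assumes only $|\alpha|_v=1$), so the step is not closed. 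You also flag, correctly, that part~(ii) as literally stated is problematic if $\alpha\in\mathcal{O}_\varphi^-(\beta)$ (then $M(\alpha)=0$), and you patch it by the assumption $\alpha\notin\mathcal{O}_\varphi^-(\beta)$, which holds in the downstream applications but is again not in the statement. In short: your route is more careful than the paper's and exposes a real subtlety in the multi-level argument, but it is incomplete in exactly the case you identify, and the paper's own proof does not supply the missing step either.
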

\begin{proof}
	Note that if $\gamma$ and $\gamma'$ are elements in $\overline{K}_v$ such that $\gamma^{d^n}=\beta$ and $\gamma'^{d^n}=\beta$ for some positive integer $n$ with $|\gamma - \alpha|_v<r$
	and $|\gamma'-\alpha|_v<r$,	then 
	\begin{align}\label{eq4.2new}
		|\gamma-\gamma'|_v=|\gamma - \alpha-( \gamma'-\alpha)|_v\leq \max\{|\gamma - \alpha|_v, |\gamma'-\alpha|_v\}<r.
	\end{align}
	Since $|\alpha|_v=1$ and if $|\gamma|_v>1$ then $|\gamma - \alpha|_v=\max \{|\alpha|_v,|\gamma|_v\}<r<1$, which is a contradiction. If $|\gamma|_v<1$, then similarly we will get a contradiction. Hence, we deduce that $|\gamma|_v= 1$. So, from \eqref{eq4.2new}, we get $|1-\gamma^{-1}\gamma'|_v<r$.
	Moreover, $\gamma^{-1}\gamma'=\zeta$, where $\zeta \in \overline{K}_v$ is a root of unity. Then by Lemma 1.1 of \cite{baker2008}, there are finitely many $\zeta \in \overline{K}_v$ such that $|1-\zeta|_v<r$. Consequently, there are finitely many $\gamma \in \overline{K}_v$ such that $|\gamma - \alpha|_v<r$. This completes the proof of (i).  Assume there are $m$ number of $\gamma \in \overline{K}_v$ such that $\gamma^{d^n}=\beta$ for some positive integer $n$ satisfying $|\gamma - \alpha|_v<r$. Put $M(\alpha)=\inf_{1 \leq i \leq m} |\gamma_i-\alpha|_v$. Then (ii) follows immediately.
\end{proof}

\subsection{Proof of Proposition \ref{prop01}}
    By replacing $K$ with $K(\alpha)$, and $S$ with the set of places $S_{K(\alpha)}$ lying over $S$, the problem reduces to proving the theorem for $\alpha \in K$. Indeed, if $\gamma \in \mathcal{O}^-_\varphi(\beta)$ and is $S$-integral with respect to $\alpha$ over $K$, then each $K$-conjugate of $\gamma$ is $S_{K(\alpha)}$-integral with respect to $\alpha$ over $K(\alpha)$. Suppose $\alpha \not \in \mbox{PrePer}(\varphi, \overline{K})$ and there are infinitely many distinct $\gamma_n \in \mathcal{O}^-_\varphi(\beta), n\geq 1$  which are $S$-integral with respect to $\alpha$.  To get the contradiction, we will evaluate the sum  
$$T_n = \frac{1}{[K(\gamma_n) : \mathbb{Q}]} \sum_{v \text{ of } K} \sum_{\sigma: K(\gamma_n)/K \hookrightarrow \overline{K}_v} \log |\sigma(\gamma_n) - \alpha|_v$$ in two different ways: On the one hand, we obtain $T_n = 0$, for all $n$ by using product formula and on the other hand, $\lim_{n \rightarrow \infty}T_n = h(\alpha) > 0$ using $S$-integrality and equidistribution. This contradiction will imply that there are finitely many distinct elements $\gamma_n \in \mathcal{O}^-_\varphi(\beta)$ which are $S$-integral with respect to $\alpha$. 

For all $n$,
\begin{align*}
T_n &= \frac{1}{[K(\gamma_n) : \mathbb{Q}]} \sum_{v \text{ of } K} \sum_{\sigma: K(\gamma_n)/K \hookrightarrow \overline{K}_v} \log |\sigma(\gamma_n) - \alpha|_v \\
	&= \frac{1}{[K(\gamma_n) : \mathbb{Q}]} \sum_{v \text{ of } K} \sum_{w \mid v} \log |\gamma_n - \alpha|_w  
	= \frac{1}{[K(\gamma_n) : \mathbb{Q}]} \sum_{w \text{ of } K(\gamma_n)} \log |\gamma_n - \alpha|_w \\
	&= \frac{1}{[K(\gamma_n) : \mathbb{Q}]} \log \prod_{w \text{ of } K(\gamma_n)} |\gamma_n - \alpha|_w 
	= \log \left(\prod_{w \text{ of } K(\gamma_n)} |\gamma_n - \alpha|_w^{\frac{1}{[K(\gamma_n) : \mathbb{Q}]}}\right)  \\
	&= \log 1 = 0.
\end{align*}
Set $v \notin S$ and $\sigma: K(\gamma_n)/K \hookrightarrow \overline{K}_v$. We know, for $ v\in K\setminus S, \gamma_n$ is $S$-integral relative to $\alpha$ if each pair of $K$-embedding $\sigma: K(\gamma_n) \hookrightarrow \overline{K}_v, \sigma': K(\alpha) \hookrightarrow \overline{K}_v$, we have  $\|\sigma(\gamma_n), \sigma'(\alpha)\|_v = 1$ under the spherical metric on $\mathbb{P}^1(\overline{K}_v)$.  
Equivalently, since	$\gamma_n$ is $S$-integral relative to $\alpha$, we have  
$$\begin{cases}  
	|\sigma(\gamma_n) - \alpha|_v \geq 1, & \text{if } |\alpha|_v \leq 1 \\  
	|\sigma(\gamma_n)|_v \leq 1, & \text{if } |\alpha|_v > 1. 
\end{cases} $$
If $|\alpha|_v > 1$, then 	$
|\sigma(\gamma_n) - \alpha|_v = \max \{|\sigma(\gamma_n)|_v,|\alpha|_v \} = |\alpha|_v$  and if $|\alpha|_v \leq 1$, then $|\sigma(\gamma_n) - \alpha|_v = \max \{|\sigma(\gamma_n)|_v, |\alpha|_v \} = |\sigma(\gamma_n)|_v$. It follows that for each $v\not \in S$ and $n$ sufficiently large $$ \log |\sigma(\gamma_n)-\alpha|_v= \log \max \{|\sigma(\gamma_n)|_v, |\alpha|_v \}.$$ Now $|\sigma(\gamma_n)|_v$ remains unchanged when $n$ is fixed and $\sigma$ varies since $\sigma(\gamma^{d^n})=\beta$.
Therefore, $$\sum_{\sigma: K(\gamma_n)/K \hookrightarrow \overline{K}_v}\log |\sigma(\gamma_n)-\alpha|_v =[K(\gamma)_n:K]\cdot \log \max \{|\sigma(\gamma_n)|_v, |\alpha|_v \}. $$ Hence, for $v \notin S$,
\begin{align}
	&\frac{1}{[K(\gamma_n) : \mathbb{Q}]} \sum_{\sigma: K(\gamma_n)/K \hookrightarrow \overline{K}_v}\log |\sigma(\gamma_n) - \alpha|_v \\ \nonumber
    &=\frac{1}{[K(\gamma_n) : \mathbb{Q}]} \sum_{\sigma: K(\gamma_n)/K \hookrightarrow \overline{K}_v} \log \max \{ |\sigma(\gamma_n)|_v, |\alpha|_v\} \nonumber \\
	&= \frac{[K(\gamma_n) : K]}{[K(\gamma_n) : \mathbb{Q}]}  \log \max \{ |\sigma(\gamma_n)|_v, |\alpha|_v\} \nonumber \\&=\frac{1}{[K:\mathbb{Q}]}\log \max \{ |\sigma(\gamma_n)|_v, |\alpha|_v\}.
\end{align}
Since $\lim_{n\to \infty} |\sigma(\gamma_n)|_v=\lim_{n\to \infty} |\beta|_v^{1/d^n}=1$, we get  
\begin{align*}
	\lim_{n \to \infty}  \frac{1}{[K(\gamma_n) : \mathbb{Q}]}\sum_{\sigma: K(\gamma_n)/K \hookrightarrow \overline{K}_v} \log |\sigma(\gamma_n) - \alpha|_v&=\frac{1}{[K:\mathbb{Q}]}\lim_{n \to \infty}\log \max \{ |\sigma(\gamma_n)|_v, |\alpha|_v\}\\ &=\frac{1}{[K:\mathbb{Q}]}\log \max \{ 1, |\alpha|_v\}.
\end{align*}
 We rewrite $T_n$ as follows
\begin{equation*}
T_n = \frac{1}{[K(\gamma_n) : \mathbb{Q}]}
	\left(\sum_{v \in S}\sum_{\sigma: K(\gamma_n)/K \hookrightarrow \overline{K}_v}\log |\sigma(\gamma_n) - \alpha|_v + \sum_{v \notin S}\sum_{\sigma: K(\gamma_n)/K \hookrightarrow \overline{K}_v}  \log |\sigma(\gamma_n) - \alpha|_v\right),
\end{equation*}	
letting $n \to \infty $, and since $S$ is finite we can interchange the limits and sum ove $v \in S$.
\begin{align}\label{al3.1}
\begin{split}
0= \lim_{n\to \infty} T_n &=\sum_{v\in S}\lim_{n \to \infty}  \frac{1}{[K(\gamma_n) : \mathbb{Q}]}  \sum_{\sigma: K(\gamma_n)/K \hookrightarrow \overline{K}_v} \log |\sigma(\gamma_n) - \alpha|_v\\
	& +\frac{1}{[K:\mathbb{Q}]} \sum_{v\notin S} \log \max \{ 1, |\alpha|_v\}.
    \end{split}
\end{align}
Next we show that for each $v \in S$, 
\begin{equation}\label{eq1}
 \lim_{n \to \infty} \frac{1}{[K(\gamma_n) : \mathbb{Q}]} \sum_{\sigma: K(\gamma_n)/K \hookrightarrow \overline{K}_v} \log |\sigma(\gamma_n) - \alpha|_v=\frac{1}{[K:\mathbb{Q}]}\log \max \{ 1, |\alpha|_v\}.
\end{equation} 
Substituting \eqref{eq1} into \eqref{al3.1}, we get $h(\alpha)=0$, which is a contradiction.	

For $v \in S$, we will consider the nonarchimedean and archimedean cases separately. For nonarchimedean case, by \eqref{lemma1} let $N(r)$ be number of points $\sigma(\gamma_n)$ satisfying $0< |\sigma(\gamma_n)-\alpha| <r$ where $ 0 <r <1$. Then 
\begin{align*}
    0 &\geq \lim_{n\to\infty}
\frac{1}{[K(\gamma_n):\mathbb{Q}]}
\sum_{\sigma} \log |\sigma(\gamma_n)-\alpha|_v \\
&\ge \lim_{n\to\infty}
\frac{1}{[K(\gamma_n):\mathbb{Q}]}
\Big( ([K(\gamma_n):K]-N(r))\log r + N(r)\log M(\alpha) \Big) \\
&= \frac{\log r}{[K:\mathbb{Q}]}.
\end{align*}
Taking $ r \to 1$ gives
$$\lim_{n\to\infty} \frac{1}{[K(\gamma_n):\mathbb{Q}]} \sum_{\sigma: K(\gamma_n)/K \hookrightarrow \overline{K}_v} \log |\sigma(\gamma_n)-\alpha|_v = 0 = \log \max\{1,|\alpha|_v\}.$$
This concludes the proof of \eqref{eq1} when $v$ is nonarchimedean. For archimedean we can proceed similarly as in Baker, Ih and Rumely \cite{baker2008}, to obtain \eqref{eq1}.  \qed

\section{Proof of Main Results}\label{sec-proof}
At first, we need to prove the existence of a constant $A$ in terms a power of the degree $[K(\alpha):K]$ for the inequality 
\[\max_{\gamma \in \mathcal{P}} \log|\gamma - \alpha|_v^{-1} < A(h(\alpha)+h(s)+1) |\mathcal{P}|^{\frac{1}{2}-\delta}.\]

The following proposition will provide a bound on the $v$-adic logarithmic distance between a non-preperiodic point $\alpha$ and a point $\gamma \in \mathcal{O}^-_\varphi(\beta)$, which can be determined in terms of the height of $\alpha$, height of $s$ and the size of the Galois orbit $\mathcal{P}$. Such bounds are crucial for controlling the rate of equidistribution in backward orbits and understanding the distribution of $S$-integral points.

\begin{proposition}\label{prop3.1}
Let $K$ be a number field, $\varphi(z)=z^d$ be a rational map of degree $d\geq 2$ and $\beta$ be a fixed non-zero element in $K$. Let $\mathcal{P}$ be any $\emph{Gal}(\overline{K}/K)$-orbit of some points in $\mathcal{O}^-_\varphi(\beta)$ and let $v$ be an archimedean place of $K$. Let $\alpha \not \in \emph{PrePer}(\varphi, \overline{K}), \gamma \in \mathcal{O}_\varphi^-(\beta)$ with $s:=|\gamma|_v$ for some real $s$. Then for any $\epsilon > 0$, there exists a constant $C_\epsilon$ such that  
	$$ \max_{\gamma \in \mathcal{P}} \log |\gamma - \alpha|_v^{-1} < C_\epsilon[K:\mathbb{Q}]^3(h(\alpha)+h(s)+1)|\mathcal{P}|^\epsilon.$$ 
\end{proposition}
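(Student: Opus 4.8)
The plan is to reduce the estimate to the theory of linear forms in logarithms in the form of Theorem~\ref{linearform}. As in the proof of Proposition~\ref{prop01}, by replacing $K$ with $K(\alpha)$ and $S$ with the places above it we may assume $\alpha\in K$; the degree introduced by this base change is what produces the power of $[K:\mathbb{Q}]$ in the bound. Fix $\gamma\in\mathcal P$ and let $n$ be minimal with $\gamma^{d^n}=\beta$. Since the relation $\gamma^{d^n}=\beta$ is stable under $\mathrm{Gal}(\overline K/K)$, every $\gamma'\in\mathcal P$ is a $d^n$-th root of $\beta$; in particular $\gamma'=\zeta'\gamma$ for some $\zeta'\in\mu_{d^n}$, and $|\gamma'|_v=|\beta|_v^{1/d^n}=s$ for all $\gamma'\in\mathcal P$. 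If $s\ne 1$, then the reverse triangle inequality gives $|\gamma'-\alpha|_v\ge|\,s-1\,|$ for every $\gamma'\in\mathcal P$, and a Liouville lower bound for $|\,|\beta|_v-1\,|$, combined with the Kummer-type inequality $d^n\ll_{[K:\mathbb Q]}|\mathcal P|\,(1+h(\beta))$ discussed below, bounds $\log|\gamma'-\alpha|_v^{-1}$ in terms of $[K:\mathbb Q]$, $h(\beta)$ and $\log|\mathcal P|$; this disposes of the radial contribution. So assume henceforth $s=1$, and let $\gamma^*\in\mathcal P$ realize the maximum in the statement.

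Suppose first that $\beta$ is a root of unity. Then $\gamma^*$ is a root of unity, and from $[K(\gamma^*):K]=|\mathcal P|$ and $\phi(\mathrm{ord}(\gamma^*))\le[K(\gamma^*):\mathbb Q]$ we get $\mathrm{ord}(\gamma^*)\ll_\epsilon(|\mathcal P|\,[K:\mathbb Q])^{1+\epsilon}$. Applying Theorem~\ref{linearform} to the root of unity $\gamma^*$ and the algebraic number $\alpha$ (which has $|\alpha|_v=1$) yields $\log|\gamma^*-\alpha|_v^{-1}\le C_\epsilon[K:\mathbb Q]^3(h(\alpha)+1)\,\mathrm{ord}(\gamma^*)^\epsilon$, and absorbing the power of $|\mathcal P|[K:\mathbb Q]$ into a rescaled $\epsilon$ gives the claim.

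Now suppose $\beta$ is not a root of unity and $s=1$. The idea is to descend $\gamma^*$ to $K$. Let $M_0$ be the least positive integer with $\gamma^{*M_0}\in K$. Since $\gamma^{*d^n}=\beta\in K$ we have $M_0\mid d^n$, hence $h(\gamma^{*M_0})=(M_0/d^n)\,h(\beta)\le h(\beta)$. On the other hand, $N_{K(\gamma^*)/K}(\gamma^*)\in K^\times$ differs from $\gamma^{*|\mathcal P|}$ by the root of unity $\rho=\prod_\sigma(\sigma(\gamma^*)/\gamma^*)\in K(\gamma^*)$, so $\mathrm{ord}(\rho)\le|\mu_{K(\gamma^*)}|\ll_\epsilon(|\mathcal P|[K:\mathbb Q])^{1+\epsilon}$, and therefore $\gamma^*$ raised to the power $|\mathcal P|\cdot\mathrm{ord}(\rho)$ lies in $K$; consequently $M_0\ll_\epsilon(|\mathcal P|[K:\mathbb Q])^{2+\epsilon}$. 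Put $\widetilde\beta=\gamma^{*M_0}\in K^\times$, so $|\widetilde\beta|_v=1$ and $h(\widetilde\beta)\le h(\beta)$. Expanding $\gamma^{*M_0}-\alpha^{M_0}=(\gamma^*-\alpha)\sum_{j=0}^{M_0-1}(\gamma^*)^j\alpha^{M_0-1-j}$ and using $|\gamma^*|_v=|\alpha|_v=1$ gives
\[
|\gamma^*-\alpha|_v\ \ge\ \frac{|\gamma^{*M_0}-\alpha^{M_0}|_v}{M_0}\ =\ \frac{\bigl|1-\alpha^{M_0}\widetilde\beta^{-1}\bigr|_v}{M_0}.
\]
Finally $\alpha^{M_0}\widetilde\beta^{-1}-1$ is a two-term linear form in the logarithms of $\alpha,\widetilde\beta\in K$ with exponents bounded by $M_0$, so the Laurent--Mignotte--Nesterenko estimates \cite{LMN} underlying Theorem~\ref{linearform} give $\bigl|1-\alpha^{M_0}\widetilde\beta^{-1}\bigr|_v\ge\exp\!\bigl(-C_\epsilon[K:\mathbb Q]^3(h(\alpha)+h(\widetilde\beta)+1)\,M_0^\epsilon\bigr)$. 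Substituting $h(\widetilde\beta)\le h(\beta)$, absorbing $\log M_0$, and bounding $M_0^\epsilon\ll(|\mathcal P|[K:\mathbb Q])^{(2+o(1))\epsilon}$ followed by a rescaling of $\epsilon$ yields the proposition.

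The main obstacle is precisely this last reduction. One cannot feed the conjugate $\alpha/\gamma^*$ into linear forms in logarithms, because $[\mathbb Q(\alpha/\gamma^*):\mathbb Q]\asymp|\mathcal P|\,[K:\mathbb Q]$ would contribute an inadmissible $|\mathcal P|^3$; nor can one take the exponent to be $d^n$, which may be far larger than $|\mathcal P|$. Passing to $\widetilde\beta=\gamma^{*M_0}\in K$ is what simultaneously keeps the degree at $[K:\mathbb Q]$ and the exponent $M_0$ polynomial in $|\mathcal P|$ and $[K:\mathbb Q]$. A related point is that a bare Liouville inequality (equivalently, Theorem~\ref{linearform} with $\zeta=1$) is useless here, since $h(\alpha^{M_0})=M_0\,h(\alpha)$ is too large; only the genuine logarithmic saving of the Baker-type bound, which replaces $M_0\,h(\alpha)$ by $h(\alpha)\,M_0^\epsilon$, brings the exponent on $|\mathcal P|$ down to $\epsilon$. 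The inequality $d^n\ll_{[K:\mathbb Q]}|\mathcal P|\,(1+h(\beta))$ used in the radial case comes from $[K(\gamma):K]\ge d^n/t$, where $t\mid d^n$ is the largest exponent for which $\beta$ is a $t$-th power in $K(\mu_{d^n})$, together with a relative Lehmer-type lower bound (Amoroso--Zannier) forcing $t\ll_{[K:\mathbb Q]}1+h(\beta)$.
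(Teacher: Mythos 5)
Your proposal takes a genuinely different route from the paper's own proof, and it is motivated by a real concern that the published argument does not address carefully. The paper handles the case $|\gamma|_v=1$ by factoring $\gamma-\alpha=\beta^{1/n}\bigl(e^{2\pi i k/n}-e^{i\theta_0}\bigr)$ with $e^{i\theta_0}=\alpha/\beta^{1/n}$, then invoking Theorem~\ref{linearform} and writing the lower bound in terms of $[\mathbb{Q}(\alpha):\mathbb{Q}]^3$ and $h(\alpha)$. But Theorem~\ref{linearform} applied to $|e^{2\pi ik/n}-e^{i\theta_0}|$ produces a bound in terms of $[\mathbb{Q}(e^{i\theta_0}):\mathbb{Q}]^3$ and $h(e^{i\theta_0})$, and the degree of $\alpha/\beta^{1/n}$ over $\mathbb{Q}$ grows with $n$; this is precisely the obstacle you describe. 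Your replacement---descend to $K$ by setting $\widetilde\beta=\gamma^{*M_0}\in K^\times$, bound $M_0$ polynomially in $|\mathcal{P}|[K:\mathbb{Q}]$ via the norm and the torsion of $K(\gamma^*)$, and then apply a two-logarithm lower bound to $\alpha^{M_0}\widetilde\beta^{-1}-1$---keeps both arguments in $K$ and so keeps the degree fixed, which is the right idea.

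That said, three points need tightening. First, Theorem~\ref{linearform} is specifically a ``root of unity against a fixed algebraic number'' estimate; for $\alpha^{M_0}\widetilde\beta^{-1}-1$ with both arguments general elements of $K$, the Laurent--Mignotte--Nesterenko two-logarithm bound gives a degree dependence of $[K:\mathbb{Q}]^4$ and a multiplicative $h(\alpha)\,h(\widetilde\beta)$ rather than the additive $h(\alpha)+h(\widetilde\beta)$; to recover the stated $[K:\mathbb{Q}]^3$ you would need to exploit that $h(\widetilde\beta)=(M_0/d^m)h(\beta)$ is small, or accept a worse power of $[K:\mathbb{Q}]$ (which only shifts constants in Theorems~\ref{thm2} and~\ref{thm02}). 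Second, your rejection of the exponent $d^m$ is overcautious: for $\beta$ not a root of unity, Proposition~\ref{degree} already forces $d^m\le([K:\mathbb{Q}]\,|\mathcal{P}|)^2$ for large $m$, so the simpler factorization $|\gamma^*-\alpha|_v\ge|\beta-\alpha^{d^m}|_v/d^m$ would deliver a polynomial exponent without the norm-descent construction of $M_0$. Third, the radial case $s\ne 1$ in the paper is a short Liouville estimate on $\bigl||\alpha|_v-s\bigr|$; the detour through Amoroso--Zannier and a Kummer-type degree bound that you sketch is much heavier than what is needed and is not part of the paper's toolkit. Overall: different and more careful approach in the key step, the degree concern you raise is genuine, but several of the quantitative claims (the $[K:\mathbb{Q}]^3$ exponent, the necessity of $M_0$) should be re-examined.
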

\begin{proof}
 First assume that $|\gamma|_v \neq 1$. For any $\gamma \in \mathcal{O}_\varphi^-(\beta)$ we have
	\begin{equation*}
		|\alpha|_v - s \leq |\gamma - \alpha|_v.
	\end{equation*}
	Now, $|\alpha|_v$ is a real algebraic number in some field $K'$ of degree at most 2 larger than $K$.
	Since $|\alpha|_v = \alpha \overline{\alpha}$, where $\overline{\alpha}$ is the complex conjugate, we have 
	$ h(|\alpha|_v^2) = h(\alpha \overline{\alpha}) =  h(\alpha) + h(\overline{\alpha}) = 2h(\alpha)$. Also, by height property
	$$
	h(|\alpha|_v - s) \leq h(|\alpha|_v) + h(-s) + \log 2.
	$$
	Therefore,
	\begin{equation}\label{h1eq}
		h(|\alpha|_v - s) \leq h(|\alpha|_v) + h(-s) + \log 2 \leq 2h(\alpha) +h(s)+ 2\log 2.
	\end{equation}
	If $s < 1$, then $ h(s)= \log \max \{|s|_v, 1\} = 0 $. Therefore, \eqref{h1eq} can be rewritten as $h(|\alpha|_v-s) \leq 2h(\alpha)+2\log 2$.
	Otherwise, $h(|\alpha|_v-s) \leq h(\alpha)+h(s)+2 \log 2$.
	In the first case,
	$$
	\log |\gamma - \alpha|_v^{-1}\leq 2(h(\alpha) + 1) \leq [K': \Q] (h(\alpha) + 1),$$ 
	and in the remaining cases 
	$$
	\log |\gamma - \alpha|_v^{-1}\leq 2(h(\alpha) + h(s)+ 1) \leq [K': \Q] (h(\alpha) + h(s)+ 1).$$
	Next assume that $|\gamma|_v=1$. Since $\gamma^{d^n}=\beta$ for some $n$, we may write 
		$\gamma=\beta^\frac{1}{d^n} e^\frac{2 \pi i}{d^n}  \text{ and }\alpha=\beta^\frac{1}{d^n} e^{i\theta_0}$. Thus,
	\begin{align*}
		\log|\gamma - \alpha|_v &=\log (|\beta^\frac{1}{d^n}|_v|e^\frac{2 \pi i}{d^n}-e^{i \theta_0}|_v)\\&=\log |\beta^\frac{1}{d^n}|_v + \log |e^\frac{2 \pi i}{d^n}-e^{i \theta_0}|_v.
	\end{align*}
By Theorem \ref{linearform}, 
	\begin{align*}
		\log|\gamma - \alpha|_v &\geq \log s - C_\epsilon[K:\mathbb{Q}]^3(h(\alpha)+1)|\mathcal{P}|^\epsilon
		\\& \geq -C_\epsilon [K:\mathbb{Q}]^3(h(\alpha)+h(s)+1)|\mathcal{P}|^\epsilon.
	\end{align*} Hence,
	$$ \max_{\gamma \in \mathcal{P}} \log |\gamma - \alpha|_v^{-1} < C_\epsilon[K:\mathbb{Q}]^3(h(\alpha)+h(s)+1)|\mathcal{P}|^\epsilon.$$
	This completes the proof of Proposition \ref{prop3.1}.
\end{proof}

\begin{proposition}\label{prop3.2}
	Let $K$ be a number field, $\varphi(z)=z^d$ be a rational map of degree $d\geq 2$ and $\beta$ be a fixed non-zero element in $K$. Fix a non-archimedean place $v$ of $\mathbb{Q}$ corresponding to the prime $p$ along with an extension to $\overline{\mathbb{Q}}$, let $D$ be a positive integer and $\delta >0$. Then there exists a constant $C_5 >0$ (depending $p, \delta, D$) such that for any $\alpha \in \mathbb{P}^1(\overline{K})$ with $[K:\mathbb{Q}] < D$ and $\gamma \in \mathcal{O}^-_\varphi(\beta)$,  we have 
	$$ \log|\gamma - \alpha |_v^{-1} < \delta$$
	if $|G_K(\gamma)| > C_5$. 
\end{proposition}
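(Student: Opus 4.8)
The plan is to reduce the non-archimedean estimate to the structure of the roots of $z^{d^n} = \beta$ and then to a counting argument inside a small disc, using Lemma \ref{lemma1} as the key input. First I would observe that since $\gamma \in \mathcal{O}^-_\varphi(\beta)$, we have $\gamma^{d^n} = \beta$ for some $n \ge 0$, so $|\gamma|_v = |\beta|_v^{1/d^n}$, which tends to $1$ as $n \to \infty$ and in any case lies in a compact range; in particular one may assume $|\gamma|_v = 1$ after discarding finitely many $n$ (those small $n$ contributing only a bounded set of $\gamma$, hence bounded $|G_K(\gamma)|$). Under $|\gamma|_v = 1$ we also get $|\alpha|_v = 1$ whenever $|\gamma - \alpha|_v < 1$ is even conceivable, by the ultrametric inequality, so we may assume $|\alpha|_v = 1$. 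The point is that if $\gamma, \gamma'$ are two roots with $\gamma^{d^n} = \gamma'^{d^n} = \beta$ then $\gamma'/\gamma$ is a root of unity, and $|\gamma - \gamma'|_v = |\gamma|_v \, |1 - \gamma'/\gamma|_v$, which by Lemma \ref{lemma1}(i)--(ii) cannot be arbitrarily small: there is a bound $r_0 = r_0(p) > 0$ so that distinct roots in a fixed backward orbit are always at $v$-distance $\ge r_0$ from one another (this is essentially the statement that there are only finitely many roots of unity $\zeta$ with $|1 - \zeta|_v$ small, by Lemma 1.1 of \cite{baker2008}).

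Next I would run the following dichotomy. Fix the threshold $\delta > 0$. Suppose $\log|\gamma - \alpha|_v^{-1} \ge \delta$, i.e. $|\gamma - \alpha|_v \le e^{-\delta} =: \varepsilon$. I claim this forces $|G_K(\gamma)|$ to be bounded by a constant $C_5 = C_5(p, D, \delta)$. The mechanism: the Galois conjugates of $\gamma$ over $K$ are all of the form $\zeta \gamma$ for roots of unity $\zeta$ (they are among the roots of $z^{d^n} - \beta$), and similarly the conjugates of $\alpha$ over $K$ are controlled since $[K(\alpha):\mathbb{Q}] < D$. If $|\gamma - \alpha|_v \le \varepsilon$ and also $|\gamma' - \alpha'|_v \le \varepsilon$ for two conjugate pairs, then $|\gamma - \gamma'|_v \le \max(\varepsilon, |\alpha - \alpha'|_v)$. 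Here $\alpha, \alpha'$ range over at most $D$ distinct values, so at most $D$ of them can be $v$-adically within $\varepsilon$ of each other in a single "cluster"; combined with the separation $|\gamma - \gamma'|_v \ge r_0$ for distinct conjugates of $\gamma$, one concludes that only $O_D(1/\log(1/\max(\varepsilon, \dots)))$-many — in fact, a number bounded purely in terms of how many disjoint $r_0$-balls can meet an $\varepsilon$-neighbourhood of a $D$-point set — conjugates $\gamma$ of the original can satisfy the inequality. Since the whole Galois orbit $G_K(\gamma)$ consists of such conjugates (the $S$-integrality / conjugacy-closedness means if one conjugate is $\delta$-close to some conjugate of $\alpha$, all are, relative to the matching conjugate of $\alpha$), this caps $|G_K(\gamma)|$.

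The cleanest way to organize the last step is probably: after reducing to $|\alpha|_v = |\gamma|_v = 1$, write $\alpha = \gamma_0 \eta$ for a fixed root $\gamma_0$ of $z^{d^n} = \beta$ closest to $\alpha$ (so $|\alpha - \gamma_0|_v$ minimal), note $\eta$ is close to a root of unity only if $\alpha$ is close to the root-of-unity locus, and bound the number of $\zeta$ with $|\zeta \gamma_0 - \alpha|_v = |\gamma_0|_v|\zeta - \eta|_v \le \varepsilon$ by the finiteness-of-small-$|1-\zeta \eta^{-1}|_v$ statement — this count depends only on $p$ and $\varepsilon$, hence only on $p$ and $\delta$; then sum over the $\le D$ Galois conjugates of $\alpha$ to get $|G_K(\gamma)| \le C_5(p, D, \delta)$. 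I expect the main obstacle to be making the "only boundedly many conjugates of $\gamma$ are $\delta$-close to the $\le D$-element conjugate set of $\alpha$" step fully rigorous: one must be careful that the relevant roots of unity live in $\overline{K}_v$ and that the ultrametric clustering argument genuinely produces a bound independent of $n$ and of $\beta$ (only the residue characteristic $p$, the degree bound $D$, and $\delta$ may enter), which is exactly where Lemma \ref{lemma1} and the Baker--Ih--Rumely lemma do the work. The contrapositive then reads: if $|G_K(\gamma)| > C_5$ then $\log|\gamma - \alpha|_v^{-1} < \delta$, as required.
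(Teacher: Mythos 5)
Your approach is essentially the same as the paper's, at its mathematical core: both proofs rest on counting roots of unity $\zeta\in\overline K_v$ with $|1-\zeta|_v$ small, via the explicit valuation formula $|\zeta-1|_v=p^{-1/((p-1)p^{n-1})}$ for $\zeta$ of order $p^n$ (or, equivalently, Lemma~1.1 of Baker--Ih--Rumely). Two distinct conjugates of $\gamma$ that are both $\varepsilon$-close to $\alpha$ differ multiplicatively by such a root of unity, and the counting bound $p(\log p)/\varepsilon$ gives the constant $C_5$.

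Where you differ from the paper is in the attempt to use the degree bound $[K(\alpha):\Q]<D$. You introduce a clustering argument over the $\le D$ conjugates of $\alpha$, but the paper's proof in fact never uses $D$ at all --- the constant $C_5=p(\log p)\varepsilon^{-1}$ depends only on $p$ and on $\delta$ (through $\varepsilon$), and the estimate is carried out with a single fixed embedding of $\alpha$ into $\overline K_v$, as the statement of the proposition requires. So that part of your proposal is extra machinery that isn't needed; the simpler single-$\alpha$ counting already does the job.

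You correctly flag the main obstacle in the final step, but the fix you sketch is not available here. You write that ``the $S$-integrality / conjugacy-closedness means if one conjugate is $\delta$-close to some conjugate of $\alpha$, all are, relative to the matching conjugate of $\alpha$.'' That observation belongs to the application of the proposition in the theorems, where $S$-integrality is assumed; it is not a hypothesis of Proposition~\ref{prop3.2}, and invoking it here is circular. The honest content of the counting argument (yours and the paper's alike) is this: at most $C_5$ of the $\text{Gal}(\overline K/K)$-conjugates of $\gamma$ can satisfy $\log|\gamma'-\alpha|_v^{-1}\ge\delta$, with $C_5$ independent of $n$ and $\beta$. Passing from that to the displayed conclusion about the specific element $\gamma$ itself requires either choosing $\gamma$ among the ``good'' conjugates, or restating the proposition in the ``all but $C_5$ conjugates'' form --- the form actually used downstream, and the form made sharp in Corollary~\ref{cor3.3} (at most one exceptional conjugate). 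So: same approach, correct key ingredient, an unnecessary detour through $D$, and the same logical pinch at the end that the paper itself only resolves by reading the conclusion as an ``all but $C_5$'' bound.
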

\begin{proof}
Let $\epsilon \in (0,1)$ be a real number. For $\sigma_1, \sigma_2 \in \text{Gal}(\overline{K}/K)$, suppose $\gamma_{k}:=\sigma_1(\gamma)$ and $\gamma_{l}:=\sigma_2(\gamma)$
with $\gamma_k, \gamma_l \in \mathcal{O}^-_\varphi(\beta)$. That is, $\gamma_k$ and $\gamma_l$ are in $\overline{K}_v$ satisfying $\gamma_k^{d^n}=\beta$ and $\gamma_l^{d^n}=\beta$ for some positive integer $n$. If we choose $r=1-\epsilon$, then by using arguments as in the proof of Lemma \ref{lemma1}, we get $|\gamma_k-\alpha|_v<1-\epsilon, |\gamma_l-\alpha|_v<1-\epsilon$. The only roots of unity $\zeta \in \overline{K}_v$ satisfying the condition $|1 - \zeta|_v \leq 1 - \epsilon$ are those of order $p^n$ for some integer $n \geq 0$. Moreover, if $\zeta$ has order divisible by at least two distinct prime numbers, then $\zeta - 1$ must be a unit.
	If $n \geq 1$, we have:
	\begin{equation*}
		1 - \epsilon \geq |\zeta - 1|_v = p^{-\frac{1}{(p - 1)p^{n - 1}}} \geq p^{-1/p^{n - 1}}.
	\end{equation*}
	Taking logarithms on both sides yields
	\begin{equation*}
		- \log(1 - \epsilon) \leq \frac{\log p}{p^{n - 1}}.
	\end{equation*}
	Assuming $- \log(1 - \epsilon) \geq \epsilon$, we obtain
	\begin{equation*}
		p^{n - 1} \leq \frac{\log p}{\epsilon}.
	\end{equation*}
	Let $n_0$ be the largest integer such that $p^{n_0} \leq p(\log p)\epsilon^{-1}$. Then clearly $n_0 \geq 0$ and $n \leq n_0$, implying that $\zeta^{p^{n_0}} = 1$. Thus, there are at most $p(\log p)\epsilon^{-1}$ possibilities for $\zeta$ satisfying $|1 - \zeta|_v \leq 1 - \epsilon$.
	Consequently, there are at most $C_5 := p(\log p)\epsilon^{-1}$ Galois conjugates of $\gamma$ such that
	\begin{equation*}
		|\gamma - \alpha|_v < p^{-\frac{1}{(p - 1)p^{n - 1}}}.
	\end{equation*}
	Therefore, if $|G_K(\gamma)| > C_5$, we must have
	\begin{equation*}
		|\gamma - \alpha|_v \geq p^{-\frac{1}{(p - 1)p^{n - 1}}},
	\end{equation*}
	and hence,
	\begin{equation*}
		\log |\gamma - \alpha|_v^{-1} < \frac{1}{p^{n - 1}(p - 1)} \log p < \delta.
	\end{equation*}
\end{proof}
From the proof of the above result, we have the following:
\begin{coro}\label{cor3.3}
	Let $K$ be a number field, let $\varphi(z)=z^d$ be a rational map of degree $d\geq 2$ and $\beta$ be a fixed non-zero element in $\mathbb{Q}$. Fix a non-archimedean place $v$ of $K$ corresponding to the prime $p$. Then for any $\alpha \in \mathbb{P}^{1}(\overline{K})$, there do not exist two distinct points $\gamma_1,\gamma_2$ in $\mathcal{O}_\varphi^-(\beta)$ such that $$\log|\gamma_i-\alpha|_v^{-1} \ge\frac{1}{p-1}\log p,
	$$ for $i=1, 2$.
\end{coro}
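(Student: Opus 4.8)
The plan is to read the corollary straight off the local estimate established inside the proof of Proposition~\ref{prop3.2}. Suppose, for contradiction, that there are distinct points $\gamma_1,\gamma_2\in\mathcal{O}^-_\varphi(\beta)$ with $\log|\gamma_i-\alpha|_v^{-1}\ge\frac{1}{p-1}\log p$, i.e. $|\gamma_i-\alpha|_v\le p^{-1/(p-1)}<1$ for $i=1,2$. By the ultrametric inequality,
\[
|\gamma_1-\gamma_2|_v\ \le\ \max\{|\gamma_1-\alpha|_v,\ |\gamma_2-\alpha|_v\}\ \le\ p^{-1/(p-1)}<1 .
\]
After the standard reduction to $|\alpha|_v=1$ (as at the start of the proof of Proposition~\ref{prop01}), the same ultrametric bookkeeping used in the proof of Lemma~\ref{lemma1} shows that $|\gamma_i-\alpha|_v<1$ forces $|\gamma_1|_v=|\gamma_2|_v=1$, hence $|\beta|_v=1$; and, writing $\gamma_i^{\,d^{n_i}}=\beta$, comparing $v$-adic absolute values places $\gamma_1$ and $\gamma_2$ in a common fibre $\varphi^{-n}(\beta)$. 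Consequently $\zeta:=\gamma_1\gamma_2^{-1}$ is a $d^{n}$-th root of unity, and $\zeta\neq 1$ because $\gamma_1\neq\gamma_2$.

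Now I would invoke the computation from the proof of Proposition~\ref{prop3.2}: since $|1-\zeta|_v=|\gamma_1-\gamma_2|_v/|\gamma_2|_v=|\gamma_1-\gamma_2|_v<1$, the order of $\zeta$ is a power of $p$, say $p^{m}$ with $m\ge 1$, and then
\[
|\gamma_1-\gamma_2|_v\ =\ |1-\zeta|_v\ =\ p^{-\frac{1}{(p-1)p^{\,m-1}}}\ \ge\ p^{-\frac{1}{p-1}} ,
\]
with equality only if $m=1$. Combined with $|\gamma_1-\gamma_2|_v\le p^{-1/(p-1)}$ this squeezes $m$ down to $1$ and $|\gamma_1-\gamma_2|_v=p^{-1/(p-1)}$; ruling out this single borderline configuration (where $\zeta$ is a primitive $p$-th root of unity, so in particular $p\mid d$) is what completes the contradiction, and for this last step I would use that $\gamma_1$ and $\gamma_2$ are $v$-adically close to one \emph{and the same} $\alpha$ — tracking all three distances among $\gamma_1,\gamma_2,\alpha$ — rather than merely close to each other.

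The conceptual input is just the fact recorded in the proof of Proposition~\ref{prop3.2}: at a place $v$ above $p$, every nontrivial root of unity $\zeta$ has $|1-\zeta|_v\ge p^{-1/(p-1)}$, with equality precisely for the primitive $p$-th roots of unity. I expect the main obstacle to be the bookkeeping at the two delicate points rather than anything deep: (i) the reduction to $|\alpha|_v=1$, which is exactly what forces $|\gamma_i|_v=1$ and hence makes the displayed $v$-adic distance equal to $|1-\zeta|_v$ with no stray factor of $|\beta|_v^{1/d^{n}}$; and (ii) the equality case $m=1$, where one has to exploit the two-sided closeness to the common point $\alpha$ in order to finish.
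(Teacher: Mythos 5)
Your outline follows the same route as the paper: the ultrametric bound $|\gamma_1-\gamma_2|_v\le\max_i|\gamma_i-\alpha|_v$, the claim that $\zeta=\gamma_1\gamma_2^{-1}$ is a nontrivial root of unity, and the lower bound $|1-\zeta|_v\ge p^{-1/(p-1)}$ for every nontrivial root of unity. You also correctly spot something the paper's own proof glosses over: the hypothesis gives only $|\gamma_i-\alpha|_v\le p^{-1/(p-1)}$, while the paper at once writes $|\gamma_i-\alpha|_v<p^{-1/(p-1)}$; so the ``contradiction'' degenerates to an equality in the borderline case. Unfortunately the patch you sketch (``tracking all three distances among $\gamma_1,\gamma_2,\alpha$'') cannot close this, because the borderline configuration really occurs: with $p=2$, $d=2$, $\beta=1$, $\gamma_1=1$, $\gamma_2=-1$, $\alpha=3$ one has $|\gamma_1-\alpha|_2=\tfrac12$ and $|\gamma_2-\alpha|_2=\tfrac14$, so both satisfy the hypothesis with ``$\ge$'' while $\gamma_1\ne\gamma_2$. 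Thus the corollary with ``$\ge$'' is simply false; it needs strict inequality, which is harmless for its only use in Proposition~\ref{quntilogprop2}, where the cutoff $\log M$ exceeds $\tfrac{1}{p-1}\log p$.

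The more serious gap is the step ``comparing $v$-adic absolute values places $\gamma_1$ and $\gamma_2$ in a common fibre $\varphi^{-n}(\beta)$.'' This does not follow: once $|\beta|_v=1$ is forced, every $\gamma\in\mathcal{O}^-_\varphi(\beta)$ has $|\gamma|_v=|\beta|_v^{1/d^n}=1$, so the absolute value carries no information about the fibre, and without a same-fibre hypothesis the ratio $\gamma_1\gamma_2^{-1}$ need not be a root of unity at all. For instance $\beta=16$, $d=2$, $\gamma_1=16\in\varphi^{-0}(\beta)$, $\gamma_2=4\in\varphi^{-1}(\beta)$ gives $\gamma_1\gamma_2^{-1}=4$; at $p=3$ with $\alpha=43$ one has $|\gamma_1-\alpha|_3=\tfrac{1}{27}$ and $|\gamma_2-\alpha|_3=\tfrac13$, both strictly less than $3^{-1/2}$, so the root-of-unity contradiction fails and the stated corollary fails too. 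The paper's proof has this same gap (it simply asserts that $\zeta$ is a root of unity and that $|\gamma_2|_v=1$), but the corollary is applied only to two Galois conjugates of a single point of $\mathcal{O}^-_\varphi(\beta)$, which automatically share a fibre. Also, the ``standard reduction to $|\alpha|_v=1$ as at the start of Proposition~\ref{prop01}'' is a misattribution: that proof reduces to $\alpha\in K$, not to $|\alpha|_v=1$. What your argument (and the paper's) actually needs is the same-$\mathrm{Gal}(\overline{K}/K)$-orbit (hence same-fibre) hypothesis together with strict inequality; neither your proposal nor the printed proof supplies the former.
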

\begin{proof}
	Let $\alpha \in \mathbb{P}^{1}(\overline{K})$ be arbitrary. Suppose, on the contrary, that we have\begin{align*}
		&\log|\gamma_i-\alpha|_v^{-1} \ge \frac{1}{p-1}\log p = \log p^{\frac{1}{p-1}}
	\end{align*} for $i=1, 2$.
	This implies $|\gamma_i-\alpha|_v < 1/p^{\frac{1}{p-1}}$.  Thus, 
	$$|\gamma_1-\gamma_2|_v =|\gamma_2|_v|\zeta-1|_v <1/p^{\frac{1}{p-1}},$$ for $\zeta=\gamma_1 \gamma_2^{-1}$ which is a contradiction, as $|1-\zeta|_v$ is at least $1/p^{\frac{1}{p-1}}$ for any root of unity $\zeta$.
\end{proof}
\begin{proposition}\label{degree}
Let $\beta \in K $ be neither zero nor a root of unity and let $\gamma$ be a solution of $z^n=\beta$ for some integer $n \in \mathbb{Z}_{> 0}$. Then, for all sufficiently large integers $n$,
\begin{equation}
[K(\gamma) : K] \ge \dfrac{\sqrt{n}}{[K:\Q]}.
\end{equation}
\end{proposition}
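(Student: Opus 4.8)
The plan is to force $\beta$ to be an essentially high perfect power in $K$ by a norm computation, and then to bound how high a power it can be using Northcott's finiteness theorem. Set $d := [K(\gamma):K]$. Since $\beta \in K$, every $K$-conjugate of $\gamma$ is again a root of $z^{n}-\beta$, so $\sigma(\gamma)^{n} = \beta$ for each of the $d$ embeddings $\sigma\colon K(\gamma)\hookrightarrow\overline{K}$; taking the product over $\sigma$ yields $N_{K(\gamma)/K}(\gamma)^{n} = \beta^{d}$. Hence, writing $N := N_{K(\gamma)/K}(\gamma)\in K^{\times}$ (note $N\neq 0$ since $\beta\neq 0$), we obtain the key identity $\beta^{d} = N^{n}$ in $K^{\times}$.

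Next I would extract a divisibility statement. Pass to the torsion-free group $\Lambda := K^{\times}/\mu(K)$, where $\mu(K)$ is the finite group of roots of unity of $K$, and let $\bar\beta,\bar N$ denote the images of $\beta, N$. Then $d\bar\beta = n\bar N$ in $\Lambda$. Put $g := \gcd(d,n)$ and write $d = g d_{1}$, $n = g n_{1}$ with $\gcd(d_{1},n_{1})=1$; torsion-freeness gives $d_{1}\bar\beta = n_{1}\bar N$, and a Bézout relation $a d_{1} + b n_{1} = 1$ then shows $\bar\beta = n_{1}(a\bar N + b\bar\beta)\in n_{1}\Lambda$. Therefore there exist $v\in K^{\times}$ and $\zeta\in\mu(K)$ with $\beta = \zeta\, v^{\,n_{1}}$, and raising to the power $w := |\mu(K)|$ kills $\zeta$ and gives $\beta^{w} = (v^{w})^{n_{1}}$ with $v^{w}\in K^{\times}$. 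Since $\beta$ is not a root of unity, neither $\beta^{w}$ nor $v^{w}$ is, so $h(v^{w})>0$ and $h(\beta^{w}) = n_{1}\,h(v^{w})$. By Northcott's theorem only finitely many elements of $K^{\times}$ have height at most $h(\beta^{w})$, so $\mu_{K} := \inf\{\,h(x) : x\in K^{\times},\ h(x)>0\,\}$ is a strictly positive constant depending only on $K$; hence $n_{1} = h(\beta^{w})/h(v^{w}) \le w\,h(\beta)/\mu_{K} =: F_{0}$, a constant depending only on $\beta$ and $K$. Because $g = n/n_{1}$ and $g\mid d$, this gives $[K(\gamma):K] = d \ge g \ge n/F_{0}$.

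Consequently, as soon as $n \ge F_{0}^{2}$ we have $[K(\gamma):K] \ge n/F_{0} \ge \sqrt n \ge \sqrt n/[K:\Q]$, which is the assertion with ``sufficiently large'' meaning $n\ge F_{0}^{2}$. The substantive step is the norm identity $\beta^{d}=N^{n}$; everything after it is soft, the only point that needs care being the bookkeeping with $\mu(K)$ in the second paragraph, since without it one controls $\beta$ only modulo torsion, which is not enough to conclude that $\beta$ is a genuine $n_{1}$-th power in $K$. If one wishes the factor $[K:\Q]$ in the denominator to reflect an honest descent, one can instead run the same argument over $\mathbb{Q}(\beta)$ to get $[\mathbb{Q}(\gamma):\mathbb{Q}]\ge\sqrt n$ for $n$ large and then divide using $[\mathbb{Q}(\gamma):\mathbb{Q}]\le [K(\gamma):K]\,[K:\Q]$; since the bound is in any case far from sharp, nothing is lost.
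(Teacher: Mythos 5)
Your proof is correct and takes a genuinely different route from the one in the text. The text fixes a primitive $n$-th root of unity $\zeta_n$, posits a $K$-embedding $\sigma_i$ of $K(\gamma\zeta_n)$ with $\sigma_i(\gamma)=\gamma\zeta_n^i$ for some $i$ coprime to $n$, infers $\zeta_n\in K(\gamma\zeta_n)$, and obtains $[K(\gamma):K]\ge\varphi(n)/[K:\Q]\ge\sqrt{n}/[K:\Q]$. You instead take norms to get $N_{K(\gamma)/K}(\gamma)^n=\beta^{d}$ with $d=[K(\gamma):K]$, pass to $K^{\times}$ modulo its finite group of roots of unity (which is torsion-free), apply a B\'ezout relation to show that $\beta$ is, up to a root of unity, an $n_1$-th power in $K$ where $n_1=n/\gcd(n,d)$, and then bound $n_1$ by a constant $F_0$ depending only on $K$ and $h(\beta)$ via Northcott's theorem; hence $d\ge n/F_0\ge\sqrt{n}$ once $n\ge F_0^2$.

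The distinction is substantive. The embedding $\sigma_i$ in the text exists only when $\gamma\zeta_n^i$ is a $K$-conjugate of $\gamma$, and $\gamma\in K(\gamma\zeta_n)$ need not hold at all; correspondingly, the $\beta$-independent bound $\varphi(n)/[K:\Q]$ that the text's argument would give cannot be correct, as the example $K=\Q$, $\beta=2^{1000}$, $n=1000$, $\gamma=2$ shows, where $[K(\gamma):K]=1$ is far below $\varphi(1000)=400$. The admissible threshold on $n$ in the proposition must depend on $\beta$, and your $n\ge F_0^2$ makes that dependence explicit. Your norm--Kummer--Northcott argument is therefore a sound replacement rather than a variant; the closing division by $[K:\Q]$ costs nothing since you already obtain $d\ge\sqrt{n}$.
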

\begin{proof}
Consider the polynomial $z^n-\beta \in K[z]$. Since $\beta$ is not a root of unity, the equation $z^n=\beta$ has exactly $n$ distinct roots in an algebraic closure $\overline K$ of $K$. Fix one root $\gamma$, and we write the full set of roots as $\{\gamma \zeta_n^i : i=0,1,\dots,n-1\}$, where $\zeta_n$ is a primitive $n$-th root of unity. For each $i$, let $\sigma_i$ be a $K$-embedding of $K(\gamma \zeta_n)$ into $\overline K$ such that $\sigma_i(\gamma)=\gamma\zeta_n^i$. Then
\begin{equation*}
\frac{\sigma_i(\gamma)}{\gamma}=\zeta_n^i.
\end{equation*}
For some $i$ with $\gcd(i,n)=1$, the element $\zeta_n^i$ is a primitive $n$-th root of unity, and hence $\zeta_n \in K(\gamma \zeta_n)$. Therefore $K(\zeta_n) \subseteq K(\gamma \zeta_n)$, which implies
\begin{equation*}
[K(\gamma): K]=[K(\gamma \zeta_n):K] \ge [K(\zeta_n):K] \ge [\mathbb Q(\zeta_n):\mathbb Q]/[K:\mathbb Q].
\end{equation*}
It is well known that
\begin{equation*}
[\mathbb Q(\zeta_n):\mathbb Q]=\varphi(n),
\end{equation*}
where $\varphi$ denotes Euler’s totient function. Moreover, for all sufficiently large $n$ we have $\varphi(n) \ge \sqrt n$ (see \cite[Theorem 327]{hardy}). Consequently,
\begin{equation*}
[K(\gamma):K] \ge \frac{\sqrt n}{[K:\mathbb Q]}.
\end{equation*}
This completes the proof.
\end{proof}
The following result gives a bound on the logarithmic equidistribution rate. 
\begin{proposition}\label{quntilogprop2}
    Let $K$ be a number field, $\varphi(z)=z^d$ be a rational map of degree $d\geq 2$ and $\beta$ be a fixed non-zero element in $K$, $\mathcal{P}$ be the Galois orbit of some point in $\mathcal{O}_\varphi^-(\beta)$. Let $v \in M_K$ be a place of $K$ that is extended to $\overline{K}$ and $\alpha \in \mathbb{P}^1(K)$ be a point. Let $s:=|\gamma|_v$ for some $\gamma \in \mathcal{O}_\varphi^-(\beta)$ and fix some $\delta$ with $0<\delta< \frac{1}{2}$. Then there exists a constant $C_7 = C_7([K:\mathbb{Q}], \delta) > 0$ such that for any $A > 1$, if 
	$$
	\max_{z \in \mathcal{P}} \log \left| z - \alpha \right|_v^{-1} < A[K:\mathbb{Q}]^3(h(\alpha) +h(s)+ 1) |\mathcal{P}|^{1/2 - \delta},
	$$
	then
	$$
	\left |\frac{1}{|\mathcal{P}|} \sum_{z \in \mathcal{P}} \lambda_{\alpha, v}(z) - \int \lambda_{\alpha, v}(z)  \, d\mu_{\varphi,v}\right |_v
	\leq  \frac{C_7}{|\mathcal{P}|^{\delta}} \sqrt{\log |\mathcal{P}|} A \left(h(\alpha)+ h(s)+ 1\right).
	$$
\end{proposition}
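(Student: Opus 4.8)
The plan is to approximate $\lambda_{\alpha,v}$ by the truncated local height $\lambda_{\tau,v}$ of Proposition~\ref{corollary2.2}, at the level $\tau:=|\mathcal{P}|^{-1/2}$, and to split
\[
\frac{1}{|\mathcal{P}|}\sum_{z\in\mathcal{P}}\lambda_{\alpha,v}(z)-\int\lambda_{\alpha,v}\,d\mu_{\varphi,v}=E_1+E_2+E_3,
\]
where $E_1=\frac{1}{|\mathcal{P}|}\sum_{z\in\mathcal{P}}\bigl(\lambda_{\alpha,v}(z)-\lambda_{\tau,v}(z)\bigr)$, $E_2=\frac{1}{|\mathcal{P}|}\sum_{z\in\mathcal{P}}\lambda_{\tau,v}(z)-\int\lambda_{\tau,v}\,d\mu_{\varphi,v}$, and $E_3=\int\bigl(\lambda_{\tau,v}(z)-\lambda_{\alpha,v}(z)\bigr)\,d\mu_{\varphi,v}$, bounding the three terms in turn. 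Since there is nothing to prove when $|\mathcal{P}|$ is bounded, I may and do assume throughout that $|\mathcal{P}|$ is large in terms of $[K:\mathbb{Q}]$, $\delta$ and $\beta$.

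The term $E_2$ is handled by the quantitative equidistribution bound of Proposition~\ref{quantequi}, in the form used to prove Proposition~\ref{corollary2.2}, applied to $f=\lambda_{\tau,v}$: the equilibrium measure of $\varphi(z)=z^d$ has trivial (hence Lipschitz) potential, so one may take $\kappa=1$, and with $\tau=|\mathcal{P}|^{-1/2}$ one has $\operatorname{Lip}(\lambda_{\tau,v})=O(|\mathcal{P}|^{1/2})$ and $\langle\lambda_{\tau,v},\lambda_{\tau,v}\rangle_v=O(\log|\mathcal{P}|)$, whence $|E_2|_v=O\bigl(|\mathcal{P}|^{-1/2}\bigr)+O\bigl(\sqrt{\log|\mathcal{P}|}\bigr)\bigl(h_\varphi(\mathcal{P})+\log|\mathcal{P}|/|\mathcal{P}|\bigr)^{1/2}$. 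As $\mathcal{P}$ is the Galois orbit of a point $z_n$ with $z_n^{d^n}=\beta$, we have $h_\varphi(\mathcal{P})=h_\varphi(z_n)=h(\beta)/d^n\le h(\beta)/|\mathcal{P}|$ (since $[K(z_n):K]\le d^n$), so for $|\mathcal{P}|$ large this is absorbed and $|E_2|_v=O\bigl(|\mathcal{P}|^{-\delta}\sqrt{\log|\mathcal{P}|}\bigr)$ because $\delta<\tfrac12$. For $E_3$, the integrand $\lambda_{\tau,v}-\lambda_{\alpha,v}=-\log^+(\tau/|z-\alpha|_v)$ is supported on $\{|z-\alpha|_v<\tau\}$, and $\mu_{\varphi,v}$ is normalized arclength on the unit circle (archimedean $v$) or the point mass at the Gauss point $\zeta_{0,1}$ (non-archimedean $v$), which lies at $v$-distance $\ge1>\tau$ from $\alpha$; an elementary estimate then gives $|E_3|_v=O(\tau)=O(|\mathcal{P}|^{-1/2})$, which is negligible.

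The crux is $E_1$. The summand $\lambda_{\alpha,v}(z)-\lambda_{\tau,v}(z)=\log^+(\tau/|z-\alpha|_v)$ vanishes unless $z\in\mathcal{P}_{\mathrm{cl}}:=\{z\in\mathcal{P}:|z-\alpha|_v<\tau\}$, and for such $z$ the hypothesis bounds $\log(\tau/|z-\alpha|_v)\le\log|z-\alpha|_v^{-1}<A[K:\mathbb{Q}]^3(h(\alpha)+h(s)+1)|\mathcal{P}|^{1/2-\delta}$, so
\[
|E_1|_v\le\frac{|\mathcal{P}_{\mathrm{cl}}|}{|\mathcal{P}|}\,A[K:\mathbb{Q}]^3\bigl(h(\alpha)+h(s)+1\bigr)|\mathcal{P}|^{1/2-\delta}.
\]
It therefore suffices to prove $|\mathcal{P}_{\mathrm{cl}}|\le c([K:\mathbb{Q}])\sqrt{|\mathcal{P}|}\log\log|\mathcal{P}|$. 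For non-archimedean $v$ this follows from Corollary~\ref{cor3.3}: every $z\in\mathcal{P}_{\mathrm{cl}}$ has $\log|z-\alpha|_v^{-1}>\tfrac12\log|\mathcal{P}|\ge\frac{\log p}{p-1}$ for $|\mathcal{P}|$ large, so $|\mathcal{P}_{\mathrm{cl}}|\le1$. For archimedean $v$ one argues geometrically: every conjugate of $z_n$ has $v$-absolute value $s$, so $\mathcal{P}_{\mathrm{cl}}=\emptyset$ unless $\bigl||\alpha|_v-s\bigr|<\tau$, and otherwise the conjugates of $z_n$, being $z_n$ times certain $d^n$-th roots of unity, are equally spaced on the circle of radius $s$ at $v$, so at most $O(1+\tau d^n/s)$ of them lie within distance $\tau$ of $\alpha$. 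By the sharp form of Proposition~\ref{degree}, namely $[K(z_n):K]\ge\varphi(d^n)/[K:\mathbb{Q}]\gg d^n/\bigl([K:\mathbb{Q}]\log\log d^n\bigr)$, we get $d^n\le c([K:\mathbb{Q}])\,|\mathcal{P}|\log\log|\mathcal{P}|$; combined with $s\ge\tfrac12$ for $|\mathcal{P}|$ (hence $n$) large and $\tau=|\mathcal{P}|^{-1/2}$ this yields $|\mathcal{P}_{\mathrm{cl}}|\le c([K:\mathbb{Q}])\sqrt{|\mathcal{P}|}\log\log|\mathcal{P}|$, as needed. Feeding this back, $|E_1|_v\le c([K:\mathbb{Q}])[K:\mathbb{Q}]^3 A\bigl(h(\alpha)+h(s)+1\bigr)|\mathcal{P}|^{-\delta}\log\log|\mathcal{P}|\le\frac{C_7}{|\mathcal{P}|^{\delta}}\sqrt{\log|\mathcal{P}|}\,A\bigl(h(\alpha)+h(s)+1\bigr)$ once $C_7=C_7([K:\mathbb{Q}],\delta)$ is chosen to absorb $[K:\mathbb{Q}]^{O(1)}$ and one uses $\log\log|\mathcal{P}|\le\sqrt{\log|\mathcal{P}|}$ for $|\mathcal{P}|$ large. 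Adding $E_1,E_2,E_3$ completes the proof.

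The step I expect to be the main obstacle is exactly this bound on $E_1$, and inside it the choice of $\tau$: it must be small enough that $|\mathcal{P}_{\mathrm{cl}}|$ stays below (a constant times) $\sqrt{|\mathcal{P}|}$, yet large enough that the Lipschitz contribution $1/(\tau|\mathcal{P}|)$ to $E_2$ remains $o(|\mathcal{P}|^{-\delta})$; reconciling these essentially forces $\tau\asymp|\mathcal{P}|^{-1/2}$, which is precisely why one needs the near-linear lower bound $\varphi(m)\gg m/\log\log m$, rather than the crude $\varphi(m)\ge\sqrt m$ recorded in Proposition~\ref{degree}, to control $d^n$ in terms of $|\mathcal{P}|$. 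A secondary difficulty is the bookkeeping required to keep every constant dependent only on $[K:\mathbb{Q}]$, $\delta$ (and $d$), and to make the tacit reduction to ``$|\mathcal{P}|$ large'' precise.
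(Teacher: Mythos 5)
Your proposal is correct and uses the same overall skeleton as the paper's proof: both replace $\lambda_{\alpha,v}$ by a truncated local height $\lambda_{\tau,v}$ and decompose the error into the three pieces $E_1$ (sum over $\mathcal{P}$ of $\lambda_{\alpha,v}-\lambda_{\tau,v}$), $E_2$ (quantitative equidistribution for the Lipschitz function $\lambda_{\tau,v}$, i.e.\ Proposition~\ref{corollary2.2}), and $E_3$ (the two integrals against $\mu_{\varphi,v}$). Where you genuinely diverge is in the tuning of the truncation level $\tau$ and the resulting treatment of $E_1$. The paper takes $\tau=M^{-1}$ extremely small (it imposes $M\ge|\mathcal{P}|^4/s$ and then sets $M=|\mathcal{P}|^{1/(2\kappa)}$ with $\kappa<1/4$), which forces the exceptional set $\mathcal{P}_2$ to have size at most one: two distinct conjugates would be within $2M^{-1}$ of each other while the crude totient bound of Proposition~\ref{degree} ($\varphi(m)\ge\sqrt m$, hence $d^n\le D^2|\mathcal{P}|^2$) gives a gap $\ge s/(D^2|\mathcal{P}|^2)$, a contradiction. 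The cost is a huge Lipschitz constant $\mathrm{Lip}(\lambda_{\tau,v})\sim M$, which the paper handles by using a small H\"older exponent $\kappa$ in the Favre--Rivera-Letelier bound so that $\mathrm{Lip}/|\mathcal{P}|^{1/\kappa}$ stays small. You instead take $\tau=|\mathcal{P}|^{-1/2}$, which keeps $\mathrm{Lip}(\lambda_{\tau,v})=O(|\mathcal{P}|^{1/2})$ manageable with $\kappa=1$, but then a whole batch of conjugates (up to $O(\sqrt{|\mathcal{P}|}\log\log|\mathcal{P}|)$ of them) can land in the small ball around $\alpha$; to keep that batch small enough you have to pass to the near-optimal totient lower bound $\varphi(m)\gg m/\log\log m$, which is indeed what the intermediate step $[K(\gamma):K]\ge\varphi(d^n)/[K:\mathbb{Q}]$ in the proof of Proposition~\ref{degree} delivers, even though the proposition is only \emph{stated} with $\sqrt{n}$. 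Both routes are sound; yours is arguably cleaner in that it never tampers with the H\"older exponent (a move whose constant-dependence on $\kappa$ the paper does not track carefully), at the price of leaning on the stronger form of the totient estimate and an extra $\log\log|\mathcal{P}|\le\sqrt{\log|\mathcal{P}|}$ absorption. Your self-assessment of the tension between the two constraints on $\tau$ is exactly right and is the real content of the difference between the two proofs.
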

\begin{proof}
Let $\max_{z \in \mathcal{P}} \log \left| z - \alpha \right|_v^{-1} =C_6, [K:\mathbb{Q}]=D$ and let $M=1/\tau>1$ be a real number. Let $v$ be an archimedean place and if we take $|\mathcal{P}| \geq D^4$ with $M \geq |\mathcal{P}|^4/s$, then we claim that there is at most one $z$ inside $\mathcal{P}$ for which $\log \left| z - \alpha \right|_v^{-1} \geq \log M$.  
Consider the disk, $B(\alpha, M^{-1})=\{z\in \mathcal{P}:|z-\alpha|_v \leq M^{-1}\}$. If $z_1$ and $z_2$
are two distinct elements in the disk $B(\alpha, M^{-1})$, then 
\begin{equation}\label{prop4.5eq1}
    |z_1-z_2|_v \leq |z_1-\alpha|_v + |z_2-\alpha|_v\leq 2 M^{-1}.
\end{equation}
Also, 
\begin{align}\label{prop4.5eq2}
\begin{split}
    |z_1-z_2|_v &= |\beta^{1/d^n}|_v\left|e^{2\pi ik_1/n}-e^{2\pi ik_2/n}\right|_v \geq  s\cdot 2\left | \sin{\dfrac{\pi (k_1-k_2)}{n}}\right|_v \\
    &\geq 2s \frac{2}{\pi} \dfrac{\pi |k_1-k_2|_v}{n}\geq \dfrac{s}{n} \geq \dfrac{s}{D^2|\mathcal{P}|^2}.
    \end{split}
\end{align}
Note that we use Proposition \ref{degree} in the last inequality of \eqref{prop4.5eq2}. From \eqref{prop4.5eq1} and \eqref{prop4.5eq2}, we get $|\mathcal{P}|^2 \leq 2 D^2$, which is a contradiction because $|\mathcal{P}| \geq D^4$. So, there exists at most one $z$ inside $\mathcal{P}$ for which $|z-\alpha|_v \leq M^{-1}$, i.e., $\log |z-\alpha|_v^{-1}\geq \log M$.

We split $\mathcal{P}$ into two disjoint sets $\mathcal{P}_1$ and $\mathcal{P}_2$, where $\mathcal{P}_1=\{z\in \mathcal{P}:\log |z-\alpha|_v^{-1} \leq \log M\}$ and $\mathcal{P}_2=\mathcal{P} \setminus \mathcal{P}_1$. For archimedean $v$, we have $|\mathcal{P}_2| \leq 1$ from the above diskussion and for nonarchimedean $v$,  we also have $|\mathcal{P}_2| \leq 1$ by Corollary \ref{cor3.3}. 

Observe that $\lambda_{\tau, v}(z)$ in \eqref{truncation} can be written as $$\lambda_{\tau, v}(z) = \lambda_{M, v}(z) = \log^+|z|_v+\log^+|\alpha|_v+  \min(\log M, -\log |z-\alpha|_v).$$
Then for all $z \in \mathcal{P}_1$, we have $\lambda_{M, v}(z)=\lambda_{\alpha, v}(z)$ and hence
\begin{align}\label{E1}
    \left |\frac{1}{|\mathcal{P}|} \sum_{z \in \mathcal{P}} \lambda_{M, v}( z) - \frac{1}{|\mathcal{P}|}\sum_{z \in \mathcal{P}} \lambda_{\alpha, v}(z)  \right |_v&=\left |\frac{1}{|\mathcal{P}|} \sum_{z \in \mathcal{P}_2} \lambda_{M, v}(z) - \frac{1}{|\mathcal{P}|}\sum_{z \in \mathcal{P}_2} \lambda_{\alpha, v}(z)  \right |_v \nonumber \\ &\leq \frac{1}{|\mathcal{P}|} \sum_{z \in \mathcal{P}_2} \left| \lambda_{M, v}(z) -\lambda_{\alpha, v}(z)\right|_v  \leq \frac{C_6}{|\mathcal{P}|}|\mathcal{P}_2|.
\end{align}
As $\mu_{\varphi, v}(B(z, \epsilon))=O(\epsilon)$, then we bound
\begin{align}\label{E2}
    \left|\int \lambda_{M, v}(z)  \, d\mu_{\varphi,v} - \int \lambda_{\alpha, v}(z)  \, d\mu_{\varphi,v}\right |_v &\leq \int \left |\lambda_{M, v}(z) -\lambda_{\alpha, v}(z) \right|_vd\mu_{\varphi,v} \nonumber \\
    & \leq \int_{|z-\alpha|_v \leq 1/M} \log |z-\alpha|_v^{-1} d\mu_{\varphi,v} \nonumber \\
    & \leq O\left(\dfrac{\log M}{M}\right).
\end{align}
Now using Proposition \ref{corollary2.2} for the map $\varphi(z)=z^d$ with $d \geq 2$, we have
\begin{equation}\label{E3}
     \left| \frac{1}{|\mathcal{P}|} \sum_{z \in \mathcal{P}} \lambda_{M,v}(z) - \int \lambda_{M,v}(z) d\mu_{\varphi,v} \right|_v \leq C_3   \left( \frac{1+\log|\mathcal{P}|^{1/2}}{|\mathcal{P}|^{1/2}}  \right).
\end{equation}
Now combining \eqref{E1}, \eqref{E2} and \eqref{E3} and choosing $\kappa <1/4$ with $M=|\mathcal{P}|^{1/{2\kappa}}$, we will get
\begin{align}
\begin{split}
    &\left |\frac{1}{|\mathcal{P}|} \sum_{z \in \mathcal{P}} \lambda_{\alpha, v}(z) - \int \lambda_{\alpha, v}(z)  \, d\mu_{\varphi,v}\right |_v 
   \leq \left |\frac{1}{|\mathcal{P}|} \sum_{z \in \mathcal{P}} \lambda_{M, v}( z) - \frac{1}{|\mathcal{P}|}\sum_{z \in \mathcal{P}} \lambda_{\alpha, v}(z)  \right |_v +\\ &  \left|\int \lambda_{M, v}(z)  \, d\mu_{\varphi,v} - \int \lambda_{\alpha, v}(z)\, d\mu_{\varphi,v}\right |_v +\left| \frac{1}{|\mathcal{P}|} \sum_{z \in \mathcal{P}} \lambda_{M,v}(z) - \int \lambda_{M,v}(z) d\mu_{\varphi,v} \right|_v  \\
    &\leq \frac{C_6}{|\mathcal{P}|}|\mathcal{P}_2|+O\left(\dfrac{\sqrt{\log |\mathcal{P}|}}{\sqrt{|\mathcal{P}|}}\right)+C_3   \left( \frac{1+\log|\mathcal{P}|^{1/2}}{|\mathcal{P}|^{1/2}}  \right)   \\
    &\leq \frac{C_6}{|\mathcal{P}|}+O\left(\dfrac{\sqrt{\log |\mathcal{P}|}}{\sqrt{|\mathcal{P}|}}\right)\\
    &\leq \frac{A[K:\mathbb{Q}]^3(h(\alpha) +h(s)+ 1) |\mathcal{P}|^{1/2 - \delta}}{|\mathcal{P}|}+O\left(\dfrac{\sqrt{\log |\mathcal{P}|}}{\sqrt{|\mathcal{P}|}}\right).
    \end{split}
\end{align}
In the third inequality, first term is bounded since $|\mathcal{P}_2| < 1$ and the sum of the other two terms is $O\left(\frac{\sqrt{\log |\mathcal{P}|}}{\sqrt{|\mathcal{P}|}}\right)$. 
 In the last inequality, we are using the hypothesis $C_6=\max_{z \in \mathcal{P}} \log \left| z - \alpha \right|_v^{-1} < A[K:\mathbb{Q}]^3(h(\alpha) +h(s)+ 1) |\mathcal{P}|^{1/2 - \delta}$.  Using a suitable constant $C_7=C_7([K:\mathbb{Q}], \delta)$ we will get the desired inequality.
\end{proof}
\subsection{Proof of Theorem \ref{thm2}}
Suppose that $D$ is a positive integer and we consider a finite extension $K$ of $\mathbb{Q}$ of degree at most $D$. Let $S'$ be the set of places in $M_K$ above $S$. Here we can observe that $|S'| \leq [K:\mathbb{Q}]|S| \leq D|S|$. 
For non-archimedean $v \in M_K$ we have $\log|z - \alpha|^{-1}_v<\delta$ by Proposition \ref{prop3.2}  and for archimedean $v\in M_K$, by Proposition \ref{prop3.1}, $$\max_{z \in \mathcal{P}} \log |z - \alpha|_v^{-1}<C_\epsilon[K:\mathbb{Q}]^3(h(\alpha)+h(s)+1)|\mathcal{P}|^\epsilon.$$ Then from Proposition \ref{quntilogprop2}, there is a constant $C_7>0$, depending only on $\varphi$, such that for any $v\in S'$, we have    
\begin{equation}\label{eq4.1}
	\left|\frac{1}{|\mathcal{P}|}\sum_{z \in \mathcal{P}} \lambda_{\alpha, v}(z)-\int \lambda_{\alpha, v}(z) d\mu_{\varphi, v}\right|_v \le \frac{C_7}{|\mathcal{P}|^\delta} \sqrt{\log |\mathcal{P}|}A(h(\alpha)+ h(s)+1).
\end{equation}
Suppose that $|\mathcal{P}|>C:=C(D,S)$ is large enough. By taking the summation of $v \in S'$ in \eqref{eq4.1}, we obtain
\begin{equation}\label{bound}
    \sum_{v \in S'}\left|\frac{1}{|\mathcal{P}|}\sum_{z \in \mathcal{P}}N_v\lambda_{\alpha, v}(z)- \int N_v\lambda_{\alpha, v}(z) d\mu_{\varphi, v}\right|_v \leq \frac{h(\alpha)+h(s) +1}{D^5}.
\end{equation}
Using Proposition \ref{propadelic}, we will get 
\begin{align*}
	\big|h_{\mathcal{L}}(\mathcal{P})-\big<{\mathcal{L}},\mathcal{L}_\varphi\big>\big|_v &\le C_{AZ, d} \Big( \frac{1+\log |\mathcal{P}|^{1/2}}{|\mathcal{P}|^{1/2}}\Big) < \frac{1}{D^{1.5}}.
\end{align*}
As $\gamma$ is $S'$-integral relative to $\alpha$, we have $\lambda_{\alpha, v}(z)=0$ for all $v \notin S'$ and $\gamma \in \mathcal{P}$.
Therefore,
\begin{align}\label{al4.10}
	\frac{1}{|\mathcal{P}|}\sum_{v \in M_{K}}\sum_{z \in \mathcal{P}}N_v\lambda_{\alpha, v}(z)-&\sum_{v \in M_{K}} \int N_v\lambda_{\alpha, v}(z) d\mu_{\varphi, v} \nonumber \\  &=h_{L}(\mathcal{P})-\big<{L},L_\varphi\big> +h_\varphi(\alpha) \nonumber \\ 
	& \ge -\frac{1}{D^{1.5}}+h_{\varphi}(\alpha).
\end{align}
Putting \eqref{al4.10} in \eqref{bound} and then simplifying, we get
\begin{equation}
    h_\varphi(\alpha)-\frac{h(\alpha)}{D^5}\leq \frac{1}{D^{1.5}}+\frac{h(s) +1}{D^5}.
\end{equation}  
Since $|h_\varphi(\alpha)-h(\alpha)| < C_\varphi$, where $C_\varphi$ is an absolute constant, then we have \begin{equation*}
    h(\alpha)\left(1-\frac{1}{D^5}\right)\leq C_\varphi +\frac{1}{D^{1.5}}+\frac{h(s) +1}{D^5}.
\end{equation*}
By a result of Dobrolowski \cite{dob}, we have
$h(\alpha) \geq O\!\left(\tfrac{1}{D^{3/2}}\right)$. Consequently, for some constant $C_8>0$, we have
\begin{align*}
   \frac{C_8}{D^{1.5}}\left(1-\frac{1}{D^{5}}\right)
   &\leq C_{\varphi}+\frac{1}{D^{1.5}}+\frac{h(s) +1}{D^5}.
\end{align*}
Then after simplifications, we deduce that
\begin{equation*}
D^{1.5} \geq  C_9(D^{5}-1),
\end{equation*}
which is impossible for $D>1$.
Hence, $|\mathcal{P}| < C$. This completes the proof of Theorem \ref{thm2}. \qed

\subsection{Proof of Theorem \ref{thm02}}
We begin the proof by applying Proposition \ref{prop3.1} and Corollary \ref{cor3.3}. In particular, there exists a constant $A_{\epsilon}$ such that, for every place $v \in S$, where $S$ is the set of places of $K$, 
\begin{equation*}
	\max_{z \in \mathcal{P}} \log |z - \alpha|_v^{-1} < A_\epsilon D^3(h(\alpha)+h(s)+1)|\mathcal{P}|^\epsilon
\end{equation*}
for all $\mbox{Gal}(\overline{K}/K)$-orbits $\mathcal{P}$ of points in $\mathcal{O}_\varphi^-(\beta)$ with the possible exception of one orbit for each place $v$. Thus, we get a total $|S_{\mbox{fin}}|$ exceptions.

Now, let $\gamma \in \mathcal{O}^-_\varphi(\beta)$ and is not one of the  above exceptions. Then by Proposition \ref{quntilogprop2}, there is a constant $C_7>0$, depending only on $d$, such that for any $v\in S$, we have
\begin{equation}\label{eq3.2}
	\left|\frac{1}{|\mathcal{P}|}\sum_{z \in \mathcal{P}}\lambda_{\alpha, v}(z)-\int\lambda_{\alpha, v}(z) d\mu_{\varphi, v}\right|_v \le \frac{C_7}{|\mathcal{P}|^\delta} \sqrt{\log |\mathcal{P}|}A(h(\alpha)+ h(s)+1).
\end{equation}  
Setting $\delta=\frac{1}{2}-\epsilon, A=A_\epsilon (\log D)^2D^3$ in \eqref{eq3.2}, we get
\begin{align*}
	\Big|\frac{1}{|\mathcal{P}|}\sum_{z \in \mathcal{P}} &\lambda_{\alpha, v}(z)-\int\lambda_{\alpha, v}(z) d\mu_{\varphi, v}\Big|_v \\
	&\le \frac{CA_\epsilon}{|\mathcal{P}|^{\frac{1}{2}-\epsilon}} \sqrt{\log |\mathcal{P}|}D^3(\log D)^2(h(\alpha)+ h(s)+1).
\end{align*}
For any constant $N > 0$ assuming that $|\mathcal{P}|>C_{10}|S|^3D^{8}$ for some suitable $C_{10}$ gives, 
\begin{equation*}
	\left|\frac{1}{|\mathcal{P}|}\sum_{z \in \mathcal{P}}\lambda_{\alpha, v}(z)-\int\lambda_{\alpha, v}(z) d\mu_{\varphi, v}\right|_v \le \frac{h(\alpha)+ h(s)+ 1}{N|S|D^{1.5}}.
\end{equation*}
Summing up over all places in $S$, we get
\begin{align*}
	\sum_{v \in S}\Big|\frac{1}{|\mathcal{P}|}\sum_{z \in \mathcal{P}}N_v\lambda_{\alpha, v}(z)&- \int N_v\lambda_{\alpha, v}(z) d\mu_{\varphi, v}\Big|_v \\
	&\le \sum_{v \in S} N_v \frac{h(\alpha)+ h(s)+ 1}{N|S|D^{1.5}} \\
	& = \sum_{v \in S} N_v \frac{h(\alpha)+1}{N|S|D^{1.5}} + \sum_{v \in S} N_v \frac{h(s)}{N|S|D^{1.5}} \\
	& \le \frac{h(\alpha)+1}{ND^{1.5}}+ \sum_{v \in S} N_v \frac{h(s)}{N|S|D^{1.5}} 
	\\&\le \frac{h(\alpha) +1}{ND^{1.5}}+ \frac{h(s)}{N|S|D^{1.5}}\\&\le \frac{h(\alpha) +1}{ND^{1.5}}.
\end{align*}
Again by a similar argument used in the proof of Theorem \ref{thm2} leads to the following conclusion 
\begin{equation*}
 h(\alpha)\le C_\varphi+\frac{h(\alpha)+1}{ND^{1.5}} + \frac{1}{D^{1.5}}.
\end{equation*}
If $h(\alpha) \ge 1$, then $ND^{1.5}\leq \dfrac{N+2}{1-C_\varphi}$, which is not possible for $D>1$. Now if  $h(\alpha) <1$, then using Dobrowolski's result \cite{dob} implies that 
$$h(\alpha) \ge \frac{C_{11}}{D(\log D)^3}.$$
This gives,
\begin{align*}
	\frac{C_{11}}{D(\log D)^3}\Big(1-\frac{1}{ND^{1.5}}\Big)
	&\le C_\varphi+\frac{1}{ND^{1.5}}+\frac{1}{D^{1.5}}.
\end{align*}
If $N$ is sufficiently large, then 
\begin{equation}
	\frac{C_{11}}{D(\log D)^3} \le C_\varphi+\frac{1}{D^{1.5}}.
\end{equation}
However, this leads to a contradiction. Hence, $|\mathcal{P}| \le C_{10}|S|^3 D^{8}$. This completes the proof of Theorem \ref{thm02}. \qed

\textbf{Acknowledgments:}
The Authors are supported by a grant from National Board for Higher Mathematics (NBHM), Sanction Order No: 14053. Furthermore, S.S.R. is partially supported by grant from Science and Engineering Research Board (SERB)(File  No.:CRG/2022/000268).

\end{document}